\newcounter{RomanNumber}
\renewcommand{\thesection}{\arabic{section}}
\newtheorem{theorem}{Theorem}[]
\newtheorem{lemma}{Lemma}[section]
\newtheorem{prop}{Proposition}[section]
\newtheorem{defi}{Definition}[section]
\theoremstyle{remark}
\newtheorem{remark}{Remark}[section]
\renewcommand{\theequation}{\thesection .\arabic{equation}}
\let\sect\section
\renewcommand\section{\setcounter{equation}{0}
\gdef\theequation{\thesection .\arabic{equation}}\sect}
\newcommand{\Rmnum}[1]{\expandafter\@slowromancap\romannumeral #1@}
\newcommand{\cA}{{\mathcal{A}}}
\newcommand{\cB}{{\mathcal{B}}}
\newcommand{\cD}{{\mathcal{D}}}
\newcommand{\cE}{{\mathcal{E}}}
\newcommand{\cG}{{\mathcal{G}}}
\newcommand{\cN}{{\mathcal{N}}}
\newcommand{\cM}{{\mathcal{M}}}
\newcommand{\cS}{{\mathcal{S}}}
\newcommand{\cP}{{\mathcal{P}}}
\newcommand{\IC}{{\mathbb{C}}}
\newcommand{\IR}{{\mathbb{R}}}
\newcommand{\TT}{{\mathbb{T}}}
\newcommand{\be}{\begin{eqnarray}}
\newcommand{\ee}{\end{eqnarray}}
\newcommand{\mes}{\mathop{\rm{mes}\, }}
\def\beeq{\begin{equation}}
\def\eneq{\end{equation}}
\def\bm{\begin{matrix}}
\def\endm{\end{matrix}}
\def\cS{{\mathcal S}}
\def\Im{{\rm Im}}
\begin{document}

\title[LDT for   Dirichlet determinants with Brjuno-R\"ussmann frequency ]{Large Deviation theorems for  Dirichlet determinants of analytic quasi-periodic  Jacobi  operators with Brjuno-R\"ussmann frequency }

\author{Wenmeng Geng}
	\address{College of Sciences, Hohai University, 1 Xikang Road Nanjing Jiangsu 210098 P.R.China}
	\email{wenmeng$\underline{\ }$geng@163.com}

\author{Kai Tao}
	\address{College of Sciences, Hohai University, 1 Xikang Road Nanjing Jiangsu 210098 P.R.China}
	\email{ktao@hhu.edu.cn,\ tao.nju@gmail.com}

\subjclass[2010]{37C55,37F10,37C40}
\keywords{ Large Deviation Theorems; Jacobi operators; finite scale Dirichlet determinants;Brjuno-R\"ussmann frequency; strong Birkhoff ergodic theorem.}

\thanks{The second author was supported by China Postdoctoral Science Foundation (Grant 2019M650094), the National Nature Science Foundation of China (Grant 11401166) and the Fundamental Research Funds for the Central Universities. He is the corresponding author.}

\date{}

\begin{abstract}
In this paper, we first study  the strong Birkhoff Ergodic Theorem for subharmonic functions with the Brjuno-R\"ussmann shift on the Torus. Then, we apply it to prove the large deviation theorems for   the
finite scale Dirichlet determinants of quasi-periodic analytic Jacobi  operators with this frequency. It shows that  the Brjuno-R\"ussmann function, which reflects the irrationality of the frequency, plays the key role in these theorems via the smallest deviation. At last, as an application, we obtain  a distribution of the eigenvalues of the Jacobi operators with Dirichlet boundary conditions, which also depends on the smallest deviation, essentially on the irrationality of the frequency.
\end{abstract}

\maketitle

\section{Introduction}
We study the following quasi-periodic analytic Jacobi operators $H(x,\omega)$ on
$l^2(\mathbb{Z})$:
\begin{equation}\label{jacobiequ}
\left[H(x,\omega)\phi\right](n)=-a(x+(n+1)\omega)\phi(n+1)-\overline{a(x+n\omega)}\phi(n-1)+v(x+n\omega)\phi(n)
,\ n\in\mathbb{Z},\end{equation}where $v:\mathbb{T}\to\mathbb{R}$ is a real analytic function called potential, $a:\mathbb{T}\to\mathbb{C}$ is a complex analytic function and not identically zero.  The characteristic equations  $H(x,\omega)\phi=E\phi$ can be expressed as
\begin{equation}
\left (\begin{array}{cc}
  \phi(n+1) \\ \phi(n) \\
\end{array} \right )=\frac{1}{a(x+(n+1)\omega)}\left ( \begin{array}{cc}
  v(x+n\omega)-E & -\overline{a(x+n\omega)} \\
 a(x+(n+1)\omega)& 0 \\
  \end{array}\right )\left (\begin{array}{cc}
  \phi(n) \\ \phi(n-1) \\
\end{array} \right ).
\end{equation}Define
\begin{equation}\label{eq:1.3}
  M(x,E,\omega):=\frac{1}{a(x+\omega)}\left ( \begin{array}{cc}
 v(x)-E & -\overline{a(x)} \\
 a(x+\omega)& 0 \\
  \end{array}\right )
\end{equation}and  call a map
$$(\omega,M):(x,\vec v)\mapsto (x+\omega,M(x)\vec v)$$ a Jacobi cocycle. Due to the fact that an analytic function only has  finite zeros, $M(x,E,\omega)$ and  the n-step transfer matrix $$M_n(x,E,\omega):=\prod_{k=n}^1M(x+k\omega,E)$$ make sense almost everywhere.
By the Kingman's subadditive ergodic theorem, the Lyapunov exponent
\begin{equation}\label{leomega}
  L(E,\omega)=\lim\limits_{n\to\infty}L_n(E,\omega)=\inf\limits_{n\to\infty}L_n(E,\omega)\ge 0
\end{equation} always exists, where
\[L_n(E,\omega)=\frac{1}{n}\int_{\mathbb{T}}\log\|M_n(x,E,\omega)\|dx.\]
Let $H_{[m, n]}(x, \omega)$ be the Jaocbi operator defined by (\ref{jacobiequ}) on a finite interval $[m,n]$ with Dirichlet boundary conditions, $\phi(m-1)=0$ and  $\phi(n+1)=0$. Let $f^a_{[m, n]}(x, E,\omega)=\det (H_{[m, n]}(x, \omega)-E)$ be its characteristic polynomial.
One has
\begin{equation}\label{1.Dirdet'}
f^a_{[m,n]}(x,  E,\omega) = f^a_{n-m+1} \bigl(x+(m-1)\omega,  E,\omega \bigr),
\end{equation}
where
\begin{equation}\label{Dirichlet-det}
    \begin{aligned}
    f^a_n(x, E,\omega)  &= \det\bigl(H_n(x, \omega)-E\bigr)\\
    & =
    \begin{vmatrix}
     v\bigl(x+\omega\bigr) - E & -a\bigl(x+2\omega\bigr) & 0  &\cdots &\cdots & 0\\[5pt]
    -\overline{a\bigl(x+2\omega\bigr)} & v\bigl(x+2\omega\bigr) - E & -a\bigl(x+3\omega\bigr) & 0 & \cdots & 0\\[5pt]
    \vdots & \vdots & \vdots & \vdots & \vdots & \vdots\\
    &&&&&-a\bigl(x+n\omega\bigr) \\[5pt]
    0 & \dotfill & 0 &0 &-\overline{a\bigl(x+n\omega\bigr)} & v\bigl(x+n\omega\bigr) - E
    \end{vmatrix}
    \end{aligned}.
\end{equation}

In this paper, the aim is to study the properties of $f_N^a(x,E,\omega)$. To state our conclusions, we first make some introductions to the background of our topic.

The operator (\ref{jacobiequ}) has  the following important special case, which is called the Schr\"odinger operator and  has been studied extensively:
\begin{equation}\label{sch-equ}
\left[H^s(x,\omega)\phi\right](n)=\phi(n+1)+\phi(n-1)+v(x+n\omega)\phi(n)
,\ n\in\mathbb{Z}.\end{equation}
Then, $M_n^s(x,E,\omega)$, $L^s(E,\omega)$, $L^s_n(E,\omega)$ and $f^s_n(x,E,\omega)$ have the similar definitions.
In \cite{BG00}, Bourgain and Goldstein proved that if $L^s(E,\omega)>0$, then for almost all $\omega$, the operator $H^s(0,\omega)$ has Anderson Localization, which means that it has pure-point spectrum with exponentially decaying eigenfunction. In \cite{GS01}, Goldstein and Schlag obtained the H\"older continuity of $L^s(E,\omega)$ in $E$ with the strong Diophantine $\omega$, i.e. for some $\alpha>1$ and any integer $n$,
\begin{equation}
  \label{sdc}
  \|n\omega\|>\frac{C_{\omega}}{|n|\left(\log|n|+1\right)^{\alpha}}.
\end{equation}
It is well known that for a fixed $\alpha>1$, almost every irrational $\omega$ satisfies (\ref{sdc}). Obviously, if we define the Diophantine number as
\begin{equation}
  \label{dc}
  \|n\omega\|>\frac{C_{\omega}}{|n|^{\alpha}},
\end{equation}
then it also has a full measure. In these two references, the key lemmas are the following called  large deviation theorems (LDTs for short) for matrix $M_n^s(x,E,\omega)$ with these two frequencies: for the Diophantine $\omega$, it was proved in \cite{BG00} that  there exists $0<\sigma<1$ such that
 \begin{equation}
   \label{ldtinbg00}
 \mes\{x:\left|\frac{1}{n}\log\|M^s_n(x,E,\omega)\|-L^s_n(E,\omega)\right|> n^{-\sigma}\}<\exp\left(-  n^{\sigma}\right);
 \end{equation}
for the strong Diophantine $\omega$, it was proved in \cite{GS01} that there exists $\delta_0^n=\frac{(\log n)^A}{n}$ such that for any $\delta>\delta_0^n$
\begin{equation}
   \label{ldtings01}
 \mes\{x:\left|\frac{1}{n}\log\|M^s_n(x,E,\omega)\|-L^s_n(E,\omega)\right|> \delta\}<\exp\left(-  c\delta^2 n\right).
 \end{equation}
 Here $\delta_0^n$ is called the smallest deviation in the LDT and  very important in our paper.

Compared with the Schr\"odinger cocycle, one of  the  distinguishing features of the Jacobi cocycle is that it is not $SL(2,\mathbb{C})$. Then Jitomirskaya, Koslover and Schulteis \cite{JKS09}, and Jitomirskaya and Marx \cite{JM11} proved that the LDT (\ref{ldtinbg00}) for $M_n(x,E,\omega)$ and the weak H\"older continuity of the Lyapunov exponent of  the analytic $GL(2,\mathbb{C})$ cocycles  hold with the  Diophantine frequency. In \cite{T14}, we showed that (\ref{ldtings01}) can hold for $M_n(x,E,\omega)$ with the strong Diophantine $\omega$ and the continuity of the Lyapunov exponent of the Jacobi cocycles $L(E,\omega)$  can be H\"older in $E$.

For any irrational $\omega$, there exist its continued fraction approximates $\{\frac{p_s}{q_s}\}_{s=1}^{\infty}$,  satisfying
\begin{equation}\label{irtor0}
 \frac{1}{q_s(q_{s+1}+q_s)}<|\omega-\frac{ p_s}{q_s}|<\frac{1}{q_sq_{s+1}}.
\end{equation}
Define $\beta$ as the exponential growth exponent of $\{\frac{p_s}{q_s}\}_{s=1}^{\infty}$ as follows:
\[
 \beta(\omega):=\limsup_s \frac{\log q_{s+1}}{q_s}\in[0,\infty].
\]
Obviously, if $\omega$ is strong Diophantine or Diophantine, then $\beta(\omega)=0$. We say $\omega$ is the Liouville number, if $\beta(\omega)>0$.
Recently, more and more attentions are paid to the question that what will happen to  these operators with more generic $\omega$. So far, the most striking answers are mainly  for the almost Mathieu operators (AMO for short), which is also a special case of the Jacobi ones
\begin{equation}\label{am-equ}
\left[H^m(x,\omega,\lambda)\phi\right](n)=\phi(n+1)+\phi(n-1)+2\lambda\cos\left(2\pi(x+n\omega)\right)\phi(n)
.\ n\in\mathbb{Z}.\end{equation}
The most famous one, the Ten Martini Problem,  which was dubbed  by Barry Simon and  conjectures that for any irrational $\omega$, the spectrum of AMO is a Cantor set, was completely solved by
Avila and Jitomirskaya \cite{AJ09}. In that reference, they also proved that $H^{m}(x,\omega,\lambda)$ has Anderson Localization for almost every $x\in\mathbb{T}$ with $\lambda>e^{\frac{16}{9}\beta}$. In \cite{AYZ17}, Avila, You and Zhou improved it to $\lambda>e^{\beta}$.

While, the answers for the Schr\"odinger or Jacobi operators  in the positive Lyapunov exponent regimes are mainly in the study of the continuity of the Lyapunov exponent. In \cite{BJ02}, they proved that the Lyapunov exponent is continuous in $E$ for any irrational $\omega$. The first result that the H\"older continuity holds for some weak Liouville frequency, which means that $\beta(\omega)<c$, where $c$ is a small constant depending only on the analytic potential $v(x)$, is \cite{YZ14}. Recently, Han and Zhang \cite{HZ18} ameliorated it to $\lambda>e^{C\beta}$ in the large coupling regimes, where the potential $v$ is of the form $\lambda v_0$ with a general analytic $v_0$ and $C$ is a positive constant also depending only on $v_0$. Our second author also proved the corresponding conclusion for the Jacobi operators in \cite{T18}. These two results are optimal, because Avila, Last, Shamis and Zhou \cite{ALSZ} showed that the continuity of the Lyapunov exponent of the almost Mathieu operators can't be H\"older  if $\beta>0$ and $e^{-\beta}<\lambda<e^{\beta}$.

Until now, we do not know much about the spectrum of the Schr\"odinger or Jacobi operators in the positive Lyapunov exponent regimes when the frequency is not strong Diophantine.
The main reason is that we do not know much about the finite-volume determinant $f^a_n(x,E,\omega)$. While, for the almost Mathieu operators, it can be handled explicitly via the Lagrange interpolation for the trigonometric polynomial. This method can  be applied for the following extend Harper's operators, which also have the cosine potential, to obtain many spectral conclusions with the generic frequency, such as \cite{AJM17} and \cite{H18}:
\begin{eqnarray*}
  a(x)&=&\lambda_3\exp[-2\pi i(x+\frac{\omega}{2})]+\lambda_2+\lambda_1\exp[2\pi i(x+\frac{\omega}{2})],\ \ 0\leq \lambda_2, 0\leq \lambda_1+\lambda_3,\\
  v(x)&=&2\cos(2\pi x).
\end{eqnarray*}
However, the Lagrange interpolation can not work for the Schr\"odinger or Jacobi operators, since their potentials both are generic analytic functions. Therefore,  in \cite{GS08},  Goldstein and Schlag applied the LDT (\ref{ldtings01}) and the relationship that
\begin{equation}
  \label{msn-fsn}
  M_n^s(x,E,\omega)=\left(\begin{array}
    {cc}f_n^s(x,E,\omega)&f^s_{n-1}(x+\omega,E,\omega)\\
    f^s_{n-1}(x,E,\omega)&f^s_{n-2}(x+\omega,E,\omega)
  \end{array}\right)
\end{equation}
 to estimate the BMO norm of $f^s_n(x,E,\omega)$. Then they obtained the following LDT for $f^s_n(x,E,\omega)$ with the strong Diophantine $\omega$ by the John-Nirenberg inequality:
\begin{equation}\label{ldtings08}
 \mes\left\{ x\in\mathbb{T}:\,\left|\log\left|f^s_{n}\left(x\right)\right|-\left\langle \log\left|f^s_{n}\right|\right\rangle \right|>n\delta\right\} \le C\exp\left(-c\delta n(\delta_0^n)^{-1}\right).
\end{equation}
This LDT was applied  to get the H\"older exponent of the H\"older continuity of $L^s(E,\omega)$ in $E$ and  the upper bound on the number of eigenvalues of $H^s_n(x,\omega)$ contained in an interval of size $n^{-C}$. What's more,  with its help, the estimation on the separation of the eigenvalues of $H^s_n(x,\omega)$ and the property that the spectrum of $H^s(x,\omega)$, denoted by $\mathcal{S}_{\omega}$,  is a Cantor set were obtained in \cite{GS11}, and the homogeneity of $\mathcal{S}_{\omega}$ was proved in \cite{GDSV}. In \cite{BV13} and \cite{BV14}, Binder and Voda applied this method to our analytic Jacobi operators (\ref{jacobiequ}). It must be noted that the above conclusions all hold only for the strong Diophantine $\omega$ and  the LDTs for $f^s_n(x,E,\omega)$ and $f^a_n(x,E,\omega)$ are the key lemma in the method.

Now, we can declare that the concrete content of our main aim is to obtain the LDT for the finite-volume determinant $f^a_n(x,E,\omega)$ with more generic $\omega$. It is the preparation for the study of the spectrum problem for discrete quasiperiodic operators of second order in the future.

In this paper, we assume that the frequency $\omega$ is the Brjuno-R\"ussmann number, which is a famous extension of the strong Diophantine number. It says that there exists a monotone increasing and continuous function $\Delta(t):[1,\infty)\to[1,\infty)$ such that $\Delta(1)=1$ and for any positive integer $k>0$,
\begin{equation}
  \label{brn}
  \|k\omega\|>\frac{C_{\omega}}{\Delta(k)},
\end{equation}
and
\begin{equation}
\label{br}
\int_1^{\infty}\frac{\log \Delta(t)}{t^2}<+\infty.
\end{equation}
For example, this Brjuno-R\"ussmann function $\Delta(t)$ can be $t(\log t+1)^{\alpha}$, $t^{\alpha}$, $\exp\left((\log t)^{\alpha}\right)$,  $\exp\left(t^{\frac{1}{\alpha}}\right)$ and $\exp\left(\frac{t}{(\log t)^{\alpha}}\right)$ with $\alpha>1$.
Define $\Gamma_{\omega}(n)=\|n\omega\|^{-1}$. Due to (\ref{irtor0}), we have that
\begin{equation}
  q_{s+1}<\Gamma_{\omega}(q_s)<q_s+q_{s+1},\ \mbox{and}\ \Gamma_{\omega}(n)<\Gamma_{\omega}(q_s),\ \forall n\in (q_s,q_{s+1}).
\end{equation}
Therefore, there exists another definition of the Brjuno-R\"ussmann number as follow: There exists a function $\Psi_{\omega}(t)=\max\{\|k\omega\|^{-1},\ \forall 0<k\le t, k\in \mathbb{Z}\}$  satisfying (\ref{br}).
Note that the denominator series $\{q_s\}_{s=1}^{\infty}$ and the function $\Psi_{\omega}$ depend on $\omega$. Thus, to make almost every irrational number satisfy (\ref{brn}), we assume that
\begin{equation}
  \label{brsd}
  \Delta(t)>t(\log t+1).
\end{equation}
It implies that $\log \Delta(t)>\log t$ but  it is false that $\frac{\Delta'(t)}{\Delta(t)}=\left(\log \Delta(t)\right)'>\left(\log t\right)'=\frac{1}{t}$.
However, there always exists another function $\tilde{\Delta}(t)$ which is larger than and close to $\Delta(t)$, and satisfies (\ref{br}) and that $t\tilde \Delta'(t)\ge \tilde \Delta(t)$ for any $t\ge 1$. So it is very reasonable for us to make the following hypothesis:~\\
\noindent {\bf Hypothesis H.1} $ \Delta(t)>t(\log t+1)$ and $t\Delta'(t)\ge \Delta (t)$ for any $t\ge 1$.~\\
\noindent
Then, our first LDT for $f^a_n(x,E,\omega)$ is
\begin{theorem}\label{ldtforfan}
Let $\omega$ be the Brjuno-R\"ussmann number satisfying Hypothesis H.1 and $L(E,\omega)>0$. There exist  constants $c=c(a,v,E,\omega)$ and $\breve C=\breve C(a,v,\omega)$, and absolute  constant $C$ such that for any integer $n\ge 1$ and $\delta>\delta_{H.1}(n):=\frac{\breve C\log\Delta(n)}{\left(\Delta^{-1}(C_{\omega}n)\right)^{1-}}$,
\[
\mes\left\{ x\in\mathbb{T}:\,\left|\log\left|f^a_{n}\left(x\right)\right|-\left\langle \log\left|f^a_{n}\right|\right\rangle \right|>n\delta\right\} \le C\exp\left(-c\delta (\delta_{H.1}(n))^{-1}\right).
\]
\end{theorem}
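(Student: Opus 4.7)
The plan is to adapt the Goldstein--Schlag strategy behind (\ref{ldtings08}) to the Brjuno--R\"ussmann setting, replacing the strong Diophantine smallest deviation $\delta_0^n=(\log n)^A/n$ by $\delta_{H.1}(n)$. The entry point is the matrix--determinant identity analogous to (\ref{msn-fsn}) for Jacobi cocycles, which expresses the four entries of $M_n(x,E,\omega)$ as shifted Dirichlet determinants $f^a_j$ up to the normalizing product $\prod_{k=1}^{n} a(x+k\omega)$. This transfers deviation estimates from $\log\|M_n\|$ to pointwise estimates on $\log|f^a_n|$, at the cost of controlling the finite factor coming from $a$.

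The first step is to establish an LDT of the form
\[
\mes\bigl\{x\in\TT:\,\bigl|\tfrac{1}{n}\log\|M_n(x,E,\omega)\|-L_n(E,\omega)\bigr|>\delta\bigr\}\le C\exp\bigl(-c\delta/\delta_{H.1}(n)\bigr),\quad \delta>\delta_{H.1}(n),
\]
for the subharmonic function $u_n(z)=\tfrac{1}{n}\log\|M_n(z,E,\omega)\|$ on a fixed strip $\{|\Im z|<\rho_0\}$. Here I would feed the strong Birkhoff ergodic theorem for subharmonic functions announced in the abstract into the standard Riesz-representation plus Cartan-set argument. Hypothesis H.1, in particular the monotonicity $t\Delta'(t)\ge\Delta(t)$, is exactly what makes $\Delta^{-1}(C_\omega n)$ well-behaved as a resonance scale and lets the partial-sum error along the orbit be dominated by $\log\Delta(n)/\Delta^{-1}(C_\omega n)^{1-}$, producing the stated $\delta_{H.1}(n)$.

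The second step upgrades this matrix LDT to a BMO bound on $\log|f^a_n|$. Using the Jacobi analogue of (\ref{msn-fsn}) one writes, off a small exceptional set in $x$,
\[
\log|f^a_n(x,E,\omega)|=nL_n(E,\omega)+\sum_{k=1}^{n}\log|a(x+k\omega)|+O\bigl(n\,\delta_{H.1}(n)\bigr),
\]
and, since $\sum_{k=1}^n\log|a(\cdot+k\omega)|$ is a Birkhoff sum of an $L^1$-subharmonic function on $\TT$, the strong Birkhoff theorem again provides BMO control for this piece. Combining the two gives $\|\log|f^a_n|-\langle\log|f^a_n|\rangle\|_{\BMO(\TT)}\lesssim n\,\delta_{H.1}(n)$. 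The John--Nirenberg inequality then converts this into the claimed $\exp(-c\delta/\delta_{H.1}(n))$ deviation estimate.

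The main obstacle is the BMO step: unlike for Schr\"odinger cocycles, the Jacobi normalizing factor may have zeros on $\TT$, so $\log|a|$ is not bounded and its Birkhoff averages must be controlled uniformly in $n$ with the much weaker Brjuno--R\"ussmann spacing. This forces one to quantify every resonance-scale estimate in terms of $\Delta$ and $\Delta^{-1}$, which is where Hypothesis H.1 is genuinely used; the growth clause $\Delta(t)>t(\log t+1)$ guarantees the bad set of $x$'s where $|a|$ is exceptionally small is summable against the orbit, while $t\Delta'(t)\ge\Delta(t)$ ensures the derived smallest deviation $\delta_{H.1}(n)$ is itself monotone in $n$ so that the John--Nirenberg output composes properly across scales.
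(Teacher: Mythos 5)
There is a genuine gap at the heart of your second step. The identity \eqref{Mnazdet} only tells you that $f^a_n$ is the $(1,1)$ \emph{entry} of $M^a_n$, and the matrix LDT controls $\log\Vert M^a_n(x)\Vert$, i.e.\ the size of the largest entry. That gives the easy upper bound $\log|f^a_n(x)|\le \log\Vert M^a_n(x)\Vert\le nL^a_n+2n\delta_0^n$ (Lemma \ref{M^a-upper-bound}), but it does \emph{not} give your asserted expansion $\log|f^a_n(x)|=nL_n+\sum_{k\le n}\log|a(x+k\omega)|+O(n\delta_{H.1}(n))$ off a small set: a single entry can be abnormally small (indeed $f^a_n(x,E)=0$ whenever $E$ is a Dirichlet eigenvalue of $H_n(x)$) even when the norm is of size $e^{nL^a_n}$, and nothing in the matrix LDT rules out that this happens on a set of $x$ that is too large, or with an error far worse than $n\delta_{H.1}(n)$. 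Controlling exactly this is the bulk of the paper's argument: Lemmas \ref{lem:considerable-difficulty-lemma} and \ref{small_fl_is_small} give crude smallness bounds, Lemma \ref{three_determinants} uses the stability of expanding/contracting directions (Lemmas \ref{expanding_contracting_directions}, \ref{wedge_triangle_ineq}) to show $|f^u_n|$ cannot be simultaneously small at three nearby shifts, Lemma \ref{avg_entries->lower_bound} converts this, via a Cartan estimate, into the lower bound $\langle\tfrac1n\log|f^u_n|\rangle\ge L_n-(\delta_0^n)^{c_0}$, and only then do BMO plus John--Nirenberg apply — and even then they first yield the weaker rate $(\delta_0^n)^{-c_0}$ of Lemma \ref{ldt_entries_weak}, which must be upgraded to $(\delta_0^n)^{-1}$ by the Avalanche Principle (Proposition \ref{prop:AP}) argument with averaging over shifts, exactly as in the paper's proof of Theorems \ref{ldtforfan}--\ref{ldth2}. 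Your proposal treats this whole chain as an immediate consequence of the matrix--determinant identity, so the decisive step is assumed rather than proved.

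Two smaller points. Your description of the role of Hypothesis H.1 is misplaced: the condition $t\Delta'(t)\ge\Delta(t)$ is used inside the proof of Theorem \ref{sbet} to make $t\log t/\Delta^{-1}(C_\omega t)$ monotone so that the sum $\sum_j (q_{s-j+1}/q_{s-j})\log q_{s-j+1}$ can be estimated, not to make $\delta_{H.1}(n)$ monotone for a multiscale John--Nirenberg argument (no such composition across scales occurs). Also, the zeros of $a$ are handled simply by applying Theorem \ref{sbet} to the subharmonic Birkhoff sums of $\log|a|$; that part of your plan is consistent with the paper, but it is not where the difficulty lies.
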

\begin{remark}
  In this paper, the notation $A^{1-}$ means $A^{1-\epsilon}$ for any small absolute $\epsilon>0$. And $A^{1+}$ has the similar definition.
\end{remark}
\begin{remark}
 If we assume the potential $v$ is of the form $\lambda v_0$ with a general analytic $v_0$, then the second author \cite{T18} proved that there exists $\lambda_0=\lambda_0(v_0,a)$ such that the Lyapunov exponent $L(E,\omega)$ is always positive for any $E$ and any irrational $\omega$ under the condition $\lambda>\lambda_0$.
\end{remark}

If $\Delta(t)=t(\log t+1)^{\alpha}$, then $\delta_{H.1}(n)=\frac{\breve C\log n}{n^{1-}}$ which is very close to the smallest deviation for the strong Diophantine number $\frac{(\log n)^A}{n}$. But if $\Delta(t)=\exp\left(t^{\frac{1}{\alpha}}\right)$, then $\delta_{H.1}(n)=\breve Cn^{\frac{1}{\alpha}-}$ which is  too large for us to apply Theorem \ref{ldtforfan} to the research of the spectrum of the analytic quasi-periodic operators (\ref{jacobiequ}) in our future work. Thus, we need to make some hypothesis to improve this smallest deviation when $\Delta(t)$ grows fast:~\\
\noindent {\bf Hypothesis H.2} $\ $ $\omega$ satisfies Hypothesis H.1 and for any $t\ge 1$,
\begin{equation}
  \label{uppbound}
  \Delta(t)<\exp\left(\frac{t}{\log t}\right).
\end{equation}
From (\ref{br}), $\Delta(t)$ has an upper bound of $\exp\left(\frac{t}{\log t}\right)$ generally. But it is possible that  it  grows very fast and exceeds this upper bound in some intervals, and in the rest  it grows very slowly and makes the integral converge. Therefore, the aim of this hypothesis  is to avoid this extreme possibility. Then, our second LDT for $f^a_n(x,E,\omega)$ is
\begin{theorem}\label{ldth1}
Let $\omega$ be the Brjuno-R\"ussmann number satisfying Hypothesis H.2 and $L(E,\omega)>0$. There exist   constants $c=c(a,v,E,\omega)$ and $\breve C=\breve C(a,v,\omega)$, and absolute  constant $C$  such that for any integer $n\ge 1$ and $\delta>\delta_{H.2}(n):=\frac{\breve C}{\left[\log(\Delta^{-1}(C_\omega n))\right]^{1-}}$,
\[
\mes\left\{ x\in\mathbb{T}:\,\left|\log\left|f^a_{n}\left(x\right)\right|-\left\langle \log\left|f^a_{n}\right|\right\rangle \right|>n\delta\right\} \le C\exp\left(-c\delta (\delta_{H.2}(n))^{-1}\right).
\]
\end{theorem}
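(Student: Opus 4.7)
The plan is to follow the same Goldstein--Schlag--Binder--Voda scheme already deployed for Theorem \ref{ldtforfan}, but to extract the sharper deviation by exploiting the upper bound $\Delta(t)<\exp(t/\log t)$ that Hypothesis H.2 adds on top of H.1. Roughly, H.1 was used to pick a scale $N\sim \Delta^{-1}(C_\omega n)$; H.2 lets us pick a much smaller scale, close to $\log(\Delta^{-1}(C_\omega n))$, without breaking any of the arithmetic estimates.

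First I would invoke the strong Birkhoff ergodic theorem for subharmonic functions with Brjuno--R\"ussmann shift that the paper establishes earlier. In its quantitative form it reads
\[
\mes\Bigl\{x\in\TT:\Bigl|\tfrac{1}{N}\sum_{j=0}^{N-1}u(x+j\omega)-\langle u\rangle\Bigr|>\eta\Bigr\}\le C\exp\bigl(-c\eta/\eta_0(N)\bigr),
\]
for any subharmonic $u$ with controlled $L^\infty$ norm, where $\eta_0(N)$ depends on $\Delta$. Applied to $u(x)=\frac{1}{n}\log\|M^a_n(x,E,\omega)\|$, together with the standard avalanche--principle/smoothing argument that propagates short-scale averaged deviation to long-scale pointwise deviation, this produces an LDT for the transfer-matrix norm valid down to any $\delta$ exceeding the appropriate $\eta_0$-driven floor.

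Second, I would transfer the deviation estimate from $\log\|M_n^a\|$ to $\log|f_n^a|$ using the Jacobi analogue of the matricial identification (\ref{msn-fsn}); up to harmless factors coming from $a(x+j\omega)$, the four entries of $M_n^a$ are four consecutive Dirichlet determinants, so controlling the BMO norm of one controls the other. The output is a BMO estimate of the form $\|\log|f_n^a|\|_{\mathrm{BMO}}\lesssim n\,\delta_{H.2}(n)$, after which the John--Nirenberg inequality directly yields the subexponential deviation bound claimed in the theorem.

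The main obstacle is the scale optimization. Under H.1 alone, any scale $N$ smaller than $\Delta^{-1}(C_\omega n)$ spoils the cancellations coming from the continued-fraction approximants of $\omega$, which is why Theorem \ref{ldtforfan} has to pay the factor $\log\Delta(n)$ in the numerator. Under H.2 the bound $\log \Delta(t)\le t/\log t$ allows us instead to work at the scale $N\sim \log(\Delta^{-1}(C_\omega n))$: the Brjuno summability condition (\ref{br}) together with H.2 keeps $\sum_{j}\log\Delta(2^j)/2^j$ dyadically summable, so the strong Birkhoff theorem tolerates this smaller $N$, and the resulting $\eta_0(N)$ is of order $1/[\log(\Delta^{-1}(C_\omega n))]^{1-}$. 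The delicate point, which I would need to verify carefully, is that this smaller $N$ survives every step of the argument --- in particular the avalanche principle, whose error term needs $N\gg (\log n)^{A}$ to close, and the transition from BMO to John--Nirenberg, which tolerates logarithmic losses but nothing polynomial. The monotonicity assumption $t\Delta'(t)\ge\Delta(t)$ from H.1 ensures that $\Delta^{-1}$ is sub-linear in a workable quantitative sense, while the ceiling from H.2 prevents $\Delta$ from having wild jumps that would make $\log(\Delta^{-1}(C_\omega n))$ unusable as a scale; balancing these gives exactly $\delta_{H.2}(n)$.
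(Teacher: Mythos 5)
Your overall scheme (strong Birkhoff theorem for subharmonic samples, transfer to $f_n^a$, BMO plus John--Nirenberg) is the right family of ideas, but the mechanism you propose for extracting the improved deviation under H.2 is not the one that works, and I believe it cannot be made to work as stated. In the paper, Hypothesis H.2 enters \emph{only} inside the proof of Theorem \ref{sbet}, in the arithmetic estimate of the exponential moment of $F_{n,\zeta}-nI(\zeta)$: one decomposes $n$ along the continued-fraction denominators $q_{s-j}$, bounds the dominant terms $\frac{n}{q_s}\log q_{s+1}$ and $\sum_{j=1}^{2m-1}\frac{q_{s-j+1}}{q_{s-j}}\log q_{s-j+1}$ using $\frac{\log\Delta(q_s)}{q_s}<\frac{1}{\log q_s}$ (which is exactly what the ceiling $\Delta(t)<\exp(t/\log t)$ gives, see (\ref{qsqs})), and chooses the descent depth $m\sim\log_2\log\Delta^{-1}(C_\omega n)$ so that the leftover $2^{-m}q_s$ is also of size $n/\log\Delta^{-1}(C_\omega n)$; see (\ref{int01})--(\ref{int-h2}). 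There is no auxiliary ``sampling scale $N\sim\log(\Delta^{-1}(C_\omega n))$'' at which the ergodic theorem is applied and then propagated by the Avalanche Principle: the deviation floor for averages of length $n$ is dictated by the approximants of $\omega$ up to height $n$ (the term $\frac{n}{q_s}\log q_{s+1}$), and your propagation step would in any case fail quantitatively. Under H.2 one typically has $\log(\Delta^{-1}(C_\omega n))\ll\log n$ (e.g.\ $\Delta(t)=\exp(t^{1/\alpha})$ gives $\log\Delta^{-1}(C_\omega n)\sim\alpha\log\log n$), so an AP chain of length $\sim n/N$ built from blocks of length $N\sim\log\Delta^{-1}(C_\omega n)$ has exceptional measure of order $(n/N)\exp(-cLN)$, which is not even small; your own caveat that the AP needs $N\gg(\log n)^{A}$ is precisely what rules out your choice of $N$. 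In the paper the AP is used only later, at scale $l\sim n^{c'}$ independent of the hypothesis, to upgrade the exponent $c_0$ in Lemma \ref{ldt_entries_weak} when $\Delta(t)\sim t^A$, $1<A<5$.

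The second gap is the transfer from $\log\|M_n^a\|$ to $\log|f_n^a|$, which you dismiss as ``harmless factors'': an LDT for the norm gives an upper bound on an individual entry but no lower bound, and the entire difficulty of the determinant LDT is to show $\bigl\langle\tfrac1n\log|f_n^a|\bigr\rangle\ge L_n^a-(\delta_0^n)^{c_0}$. The paper does this through Lemma \ref{small_fl_is_small}, the three-determinant estimate of Lemma \ref{three_determinants} (stability of contracting/expanding directions via Lemmas \ref{expanding_contracting_directions} and \ref{wedge_triangle_ineq}), the Cartan estimate, and Lemma \ref{avg_entries->lower_bound}; only then do the $L^1$ bound, Lemma \ref{bgsbmonorm} and John--Nirenberg give the stated bound $C\exp(-c\delta(\delta_{H.2}(n))^{-1})$. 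Without an argument of this type your ``BMO of one controls the other'' step is unsupported, and with it the H.2-specific content of the theorem reduces to the arithmetic estimate described above, which your proposal does not supply.
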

\begin{remark}
  With this hypothesis, no matter $\Delta(t)$ equals to $\exp\left(t^{\frac{1}{\alpha}}\right)$ or $\exp\left(\frac{t}{(\log t)^{\alpha}}\right)$, the smallest deviation $\delta_{H.2}(n)=\frac{\breve C}{\left[\log\log n\right]^{1-}}\ll 1$ which satisfies what we need for the study of the spectrum, such as Theorem \ref{thm:Number-of-eigenvalues}.
\end{remark}

As mentioned above, what we want to avoid is the case that $\Delta(t)$ grows faster than $\exp(t)$ in some intervals. But Hypothesis H.2 only requires that $\Delta(t)$ has an upper bound, but has no restriction on its derivative. Thus, we make the following hypothesis, which gives the mutual restriction between $\Delta(t)$ and $\Delta'(t)$ and looks also very reasonable:~\\
\noindent {\bf Hypothesis H.3} $\ $ $\omega$ satisfies Hypothesis H.1 and $\frac{\log \Delta(t)}{t}$ is non-increasing for any $t\ge 1$.~\\
Easy computation shows that this hypothesis is equivalent to the inequality
\[\Delta'(t)\le \frac{\Delta(t)\log \Delta(t)}{t}.\]
Combined it with Hypothesis H.1, it shows that the  bound of $t\Delta'(t)$ is determined by  $\Delta(t)$. Obviously, all examples of  functions mentioned above satisfy this hypothesis. With its help, we improve Theorem \ref{ldtforfan} and  \ref{ldth1} as follows:
\begin{theorem}\label{ldth2}
Let $\omega$ be the Brjuno-R\"ussmann number satisfying Hypothesis H.3 and $L(E,\omega)>0$. There exist   constants $c=c(a,v,E,\omega)$ and $\breve C=\breve C(a,v,\omega)$, and absolute  constant $C$  such that for any integer $n\ge 1$ and $\delta>\delta_{H.3}(n):=\frac{\breve C\log ( C_{\omega}n)}{\left[\Delta^{-1}(C_{\omega}n)\right]^{1-}}$,
\[
\mes\left\{ x\in\mathbb{T}:\,\left|\log\left|f^a_{n}\left(x\right)\right|-\left\langle \log\left|f^a_{n}\right|\right\rangle \right|>n\delta\right\} \le C\exp\left(-c\delta (\delta_{H.3}(n))^{-1}\right).
\]
\end{theorem}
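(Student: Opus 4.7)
The strategy is to retrace the Goldstein--Schlag scheme used in the proofs of Theorems \ref{ldtforfan} and \ref{ldth1}, exploiting Hypothesis H.3 precisely where the numerator of the smallest deviation is determined. The proof divides naturally into three stages: first, an LDT for the subharmonic function $u_n(x) := \frac{1}{n}\log\|M_n(x, E, \omega)\|$ with the improved deviation $\delta_{H.3}(n)$; second, the transfer of this LDT to $\frac{1}{n}\log|f^a_n(x, E, \omega)|$ via the matrix-determinant identity (\ref{msn-fsn}) and its Jacobi analogue; third, a BMO estimate for $\frac{1}{n}\log|f^a_n|$ followed by the John--Nirenberg inequality, which converts BMO control into the claimed exponential measure bound.

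\textbf{Where H.3 enters.} The two transfer steps (matrix LDT $\Rightarrow$ determinant LDT, and BMO $\Rightarrow$ measure estimate) are formally identical to those of Theorems \ref{ldtforfan} and \ref{ldth1}; the genuinely new input is in the first stage. In Theorem \ref{ldtforfan}, the numerator $\log\Delta(n)$ of $\delta_{H.1}(n)$ arises when one bounds the tail of a sum over continued-fraction denominators $\{q_s\}$ via the Brjuno--R\"ussmann estimate (\ref{brn}). Under H.3, since $\log\Delta(t)/t$ is non-increasing, one gains the pointwise comparison
\[
\log\Delta(q_s) \le \frac{q_s}{\Delta^{-1}(C_{\omega} n)}\,\log(C_{\omega} n), \qquad q_s \le n.
\]
Substituting this bound into the relevant dyadic-type sum replaces the factor $\log\Delta(n)$ by $\log(C_{\omega} n)$, while the denominator $[\Delta^{-1}(C_{\omega} n)]^{1-}$ is untouched. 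This is exactly the improvement $\delta_{H.1}(n) \mapsto \delta_{H.3}(n)$, and the positive Lyapunov exponent hypothesis $L(E,\omega)>0$ ensures that $u_n$ admits a subharmonic extension to an annular neighborhood of $\mathbb{T}$ on which the strong Birkhoff ergodic theorem proved earlier in the paper applies.

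\textbf{Main obstacle.} The delicate point is that $\Delta^{-1}(C_{\omega} n)$ may lie far below $n$, so one must verify that after invoking the pointwise inequality above, the remaining summation over denominators $q_s$ in the range $[\Delta^{-1}(C_{\omega} n), n]$ does not accumulate additional logarithmic loss. The geometric growth $q_{s+1} \ge q_s + q_{s-1}$ of continued-fraction denominators, together with the mutual restriction between $\Delta$ and $\Delta'$ enforced by H.3, should be enough to dominate this tail by a convergent series. Everything else---the Jacobi analogue of (\ref{msn-fsn}), the subharmonic/BMO comparison between $\log\|M_n\|$ and $\max(\log|f^a_n|, \log|f^a_{n-1}|)$ as in \cite{GS08, BV13, BV14}, and the final John--Nirenberg application---carries over from the preceding two theorems without modification.
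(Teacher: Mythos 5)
Your overall route is the paper's own: the only place Hypothesis H.3 enters is the exponential-integral estimate over continued-fraction denominators behind the strong Birkhoff ergodic theorem, and your pointwise comparison is exactly the paper's estimate (\ref{h3-logqs1}) for the top term $l_s\log q_{s+1}$; the rest of the machinery (matrix LDTs, the three-determinant lemma and Cartan estimate, the average/BMO bounds, the Avalanche Principle bootstrap for the case $c_0<1$, and John--Nirenberg) is hypothesis-independent and is carried out once for all three LDTs, so deferring to it is legitimate, even though it is substantially heavier than a direct ``transfer via (\ref{msn-fsn})''.

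Two points in your sketch would not survive being made precise. First, the inequality $\log\Delta(q)\le \frac{q}{\Delta^{-1}(C_{\omega}n)}\log(C_{\omega}n)$ follows from the monotonicity of $\log\Delta(t)/t$ only for $q\ge \Delta^{-1}(C_{\omega}n)$, not for all $q\le n$ as you state; it is available for the largest denominator $q_s$ because $q_s>\Delta^{-1}(C_{\omega}q_{s+1})>\Delta^{-1}(C_{\omega}n)$ by (\ref{qq})--(\ref{qsdelta-1}), and that single term is the only place H.3 is used, but it cannot be applied to the lower denominators $q_{s-j}$, which may fall below $\Delta^{-1}(C_{\omega}n)$. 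Second, your resolution of the ``main obstacle'' is not the correct mechanism: the sum $\sum_{j=1}^{2m-1}\frac{q_{s-j+1}}{q_{s-j}}\log q_{s-j+1}$ is not dominated by a convergent series produced by the geometric growth of the $q_s$ (its terms need not decay in $j$). In the paper each term is bounded by $\frac{n\log n}{\Delta^{-1}(C_{\omega}n)}$ via (\ref{sum-qi}), which rests on the H.1 assertion that $t\log t/\Delta^{-1}(C_{\omega}t)$ is increasing (this is where $t\Delta'(t)\ge\Delta(t)$ is used, not the H.3 bound on $\Delta'$); the iteration is cut off after $m\sim\log_{2}\Delta^{-1}(C_{\omega}n)$ steps, the remainder $2^{-m}q_s$ being harmless because $q_s>2^{m}q_{s-2m}$, and the resulting logarithmic number of terms is absorbed into the exponent in $\left[\Delta^{-1}(C_{\omega}n)\right]^{1-}$. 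Since this is literally the same computation as in the H.1 branch, your appeal to the earlier theorems rescues the step, but the argument as you describe it would not. (Minor: the subharmonic extension of $\frac{1}{n}\log\|M_n^a\|$ has nothing to do with $L(E,\omega)>0$; positivity is needed later, for the determinant lower bounds and the Avalanche Principle.)
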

\begin{remark}
 Since $t(\log t+1)<\Delta(t)$,  it is obvious that $$\delta_{H.1}(n)=\frac{\log\Delta(n)}{\left(\Delta^{-1}(C_{\omega}n)\right)^{1-}}\gg\delta_{H.3}(n)=\frac{\log ( C_{\omega}n)}{\left[\Delta^{-1}(C_{\omega}n)\right]^{1-}}.$$
 On the other hand, due to the fact that $0<\Delta^{-1}(t)<t<\Delta(t)$, $n\gg \Delta\left(\log^2\Delta^{-1}(n)\right)$, we have that
 \[\delta_{H.2}(n)=\frac{1}{\left[\log(\Delta^{-1}(C_\omega n))\right]^{1-}}\gg \delta_{H.3}(n)=\frac{\log ( C_{\omega}n)}{\left[\Delta^{-1}(C_{\omega}n)\right]^{1-}}.\]
\end{remark}~\\

 The key to prove these three LDTs for $f_n^a(x,E,\omega)$ is  an ergodic theorem for the subharmonic function shifting on $\mathbb{T}$. Specifically, we know that if $T:X \to X$ is an ergodic transformation on a measurable space $(X,\Sigma,m)$  and $f$ is an $m-$integrable  function, then the Birkhoff Ergodic Theorem tells that the time average functions $f_n(x)=\frac{1}{n}\sum_{k=0}^{n-1}f(T^kx)$ converge to the space average $\langle f\rangle=\frac{1}{m(X)}\int_X fdm$ for almost every $x\in X$. But it doesn't tell us how fast do they converge? So, we call a  theorem the strong Birkhoff Ergodic Theorem, if it gives the convergence rate. The following  strong Birkhoff Ergodic Theorem for the subharmonic function shifting on $\mathbb{T}$ is the key which we just mentioned above:
 \begin{theorem}
  \label{sbet}
 Let $u:\Omega\to \IR$ be a subharmonic function on
a domain $\Omega\subset\IC$ and $\omega$ be the Brjuno-R\"ussman number satisfying Hypothesis H.1, or H.2, or H.3. Suppose that $\partial \Omega$ consists
of finitely many piece-wise $C^1$ curves and  $\mathbb{T}$ is contained in $\Omega'\Subset \Omega$(i.e., $\Omega'$ is a compactly contained subregion of~$\Omega$). There exist constants  $c=c(\omega, u)$ and $\breve C=\breve C(\Omega, u)$ such that for any positive $n$ and $\delta>\delta_0^n$,
\begin{equation}
  \label{eq-sbet}
  \mes\left (\left \{x\in \mathbb{T}:|\sum_{k=1}^n u(x+k\omega)-n\langle u\rangle|>\delta n\right \}\right ) \leq  \exp\left(-c\delta n\right),
\end{equation}
where
\begin{equation}
  \label{set-delta0n}
  \delta_0^n=\left\{\begin{array}
    {cccc}
    \delta_{H.1}(n):=&\frac{\breve C\log\Delta(n)}{\left(\Delta^{-1}(C_{\omega}n)\right)^{1-}},&\ &\mbox{if}\ \omega\ \mbox{satisfies H.1},\\
    \delta_{H.2}(n):=&\frac{\breve C}{\left[\log(\Delta^{-1}(C_\omega n))\right]^{1-}},&\ &\mbox{if}\ \omega\ \mbox{satisfies H.2},\\
    \delta_{H.3}(n):=&\frac{\breve C\log ( C_{\omega}n)}{\left[\Delta^{-1}(C_{\omega}n)\right]^{1-}} ,&\ &\mbox{if}\ \omega\ \mbox{satisfies H.3}.
  \end{array}\right.
\end{equation}
 \end{theorem}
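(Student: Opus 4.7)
The plan is to adapt the Fourier--analytic route of \cite{BG00} and \cite{GS01} to the Brjuno--R\"ussmann setting. Since $u$ is bounded subharmonic on a neighborhood of $\TT$, the Riesz representation on $\Omega'$ supplies the Fourier decay $|\hat{u}(k)|\le C_u/|k|$ for $k\neq 0$. Writing
$$
S_n(x):=\sum_{j=1}^{n}u(x+j\omega)-n\langle u\rangle,
$$
the identity $\widehat{S_n}(k)=\hat u(k)\sum_{j=1}^{n}e^{2\pi ikj\omega}$, combined with a geometric-sum estimate and (\ref{brn}), gives
$$
|\widehat{S_n}(k)|\;\le\;\frac{C_u}{|k|}\,\min\!\left(n,\;\frac{\Delta(|k|)}{C_\omega}\right),\qquad k\neq 0.
$$

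With the natural cutoff $N:=\Delta^{-1}(C_\omega n)$, I would split the Fourier series at this scale. The tail $|k|>N$ is handled via the trivial bound $|\widehat{S_n}(k)|\le C_u n/|k|$ followed by a Fej\'er-type smoothing, which keeps its contribution under control. For the main part $|k|\le N$, I group frequencies according to the continued-fraction denominators $\{q_s\}$ of $\omega$; on a block $q_{s-1}<|k|\le q_s$ one has $\|k\omega\|^{-1}\lesssim q_{s+1}$, and there are $O(q_s/q_{s-1})$ relevant $k$'s. Pairing with the Fourier decay of $\hat u$ and Abel-summing over blocks produces a BMO-type estimate of the form
$$
\|S_n\|_{\BMO(\TT)}\;\le\;C\;+\;C\,n\,\delta_0^{\,n},
$$
where $\delta_0^n$ takes the three explicit forms prescribed in (\ref{set-delta0n}). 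The John--Nirenberg inequality then converts this bound into the exponential tail estimate (\ref{eq-sbet}).

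The heart of the argument is therefore the block sum $\sum_{0<|k|\le N}\Delta(|k|)/|k|$ (and its $\ell^2$-refinement against the small-divisor structure). Each of the three hypotheses enters here: H.1, in the form $t\Delta'(t)\ge\Delta(t)$, is exactly what makes Abel summation yield the logarithmic factor $\log\Delta(n)$ without spurious loss, producing $\delta_{H.1}(n)$; under H.3 the monotonicity of $t^{-1}\log\Delta(t)$ allows one to replace this factor by $\log(C_\omega n)$ via $\Delta'(t)\le \Delta(t)\log\Delta(t)/t$, giving $\delta_{H.3}(n)$; and under H.2 the upper bound $\Delta(t)<\exp(t/\log t)$ lets one absorb a $\Delta^{-1}$ into a logarithm and so sharpen the estimate to $\delta_{H.2}(n)$. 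The condition (\ref{brsd}) is used to guarantee that $\Delta^{-1}(C_\omega n)$ grows to infinity with $n$, so that all three $\delta_0^n$ tend to $0$.

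The main obstacle, as in \cite{GS01}, will be matching the block-by-block logarithmic losses exactly against the form of $\delta_0^n$ dictated by each hypothesis; in particular, one must avoid carrying extra factors of $\log q_{s+1}/\log q_s$ when telescoping the continued-fraction blocks, and one must verify that the constants hidden in $\lesssim$ depend only on $u,\Omega,\omega$ (through $C_\omega$ and the Riesz mass of $u$), not on $n$. A secondary technical issue is the Fej\'er smoothing of the high-frequency tail, which must be calibrated so that the smoothing error is absorbed into $n\delta_0^n$ rather than the constant term $C$ in the BMO bound; this is where hypothesis-specific bookkeeping on the decay rate of $|\hat u(k)|$ against $\min(n,\Delta(|k|)/C_\omega)$ becomes delicate.
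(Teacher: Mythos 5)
There is a genuine gap, and it lies in the last step of your scheme. You propose to prove a bound of the form $\|S_n\|_{\BMO(\TT)}\le C+Cn\delta_0^n$ and then invoke John--Nirenberg. But John--Nirenberg applied at level $\lambda=\delta n$ with a BMO norm of size $n\delta_0^n$ only yields
\[
\mes\left\{x\in\TT:\ |S_n(x)|>\delta n\right\}\ \le\ C\exp\left(-\,c\,\frac{\delta n}{n\delta_0^n}\right)=C\exp\left(-\,c\,\delta(\delta_0^n)^{-1}\right),
\]
which is the \emph{weak} rate that the paper obtains for the determinants $f_n^a$ in Theorems \ref{ldtforfan}--\ref{ldth2}, not the rate $\exp(-c\delta n)$ claimed in Theorem \ref{sbet}. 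Since here $\delta_0^n\gg 1/n$ (indeed $n\delta_0^n\to\infty$), the two rates differ drastically: for $\delta$ comparable to $\delta_0^n$ your bound does not even decay in $n$, whereas the theorem asserts decay like $\exp(-cn\delta_0^n)$. The distinction is exactly the point the paper emphasizes: the exceptional-measure rate in the strong Birkhoff ergodic theorem is always $\exp(-c\delta n)$, with the irrationality of $\omega$ entering only through the threshold $\delta_0^n$, and a BMO/John--Nirenberg argument structurally cannot recover this because the BMO norm itself carries the factor $n\delta_0^n$.

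The paper's proof avoids this loss by working with exponential moments rather than BMO. Using the Riesz representation $u(z)=\int_{\Omega'}\log|z-\zeta|\,d\mu(\zeta)+h(z)$, it reduces matters to $F_{n,\zeta}(x)-nI(\zeta)$, decomposes $n$ along the continued-fraction denominators ($n=l_sq_s+r_s$, iterated with Cauchy--Schwarz), and combines the Goldstein--Schlag pointwise estimate (\ref{lemir}) with the integrability of $\log\dist(x,\Omega)$ to prove the uniform bound $\int_0^1\exp\bigl(\sigma|F_{n,\zeta}(x)-nI(\zeta)|\bigr)dx\le\exp(\tilde C\sigma n\breve\delta_0^n)$ for a \emph{fixed} small $\sigma$; the three hypotheses H.1--H.3 enter precisely in estimating $\frac{n}{q_s}\log q_{s+1}+\sum_{j=1}^{2m-1}\frac{q_{s-j+1}}{q_{s-j}}\log q_{s-j+1}$, which is where the three forms of $\delta_0^n$ come from. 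Jensen's inequality transfers this to $\int(F_{n,\zeta}-nI(\zeta))\,d\mu(\zeta)$, and then Markov's inequality with $\delta>\delta_0^n$ gives $\exp(-\sigma\delta n+\tilde C\sigma\mu(\Omega')n\breve\delta_0^n)\le\exp(-c\delta n)$. If you want to keep a Fourier-flavored argument, you would still have to upgrade your block estimates into an exponential-moment (or Cartan-type) bound with an $n$-independent $\sigma$; the BMO route as written proves a strictly weaker statement than the theorem.
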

  A very interesting thing we find is that  no matter the irrational frequency is, the convergence rate of the exceptional measure  is always $\exp\left(-c\delta n\right)$. The only difference is the smallest deviation $\delta_0^n$. If $\beta(\omega)>0$, then our second author obtained in \cite{T18} that $\delta_0^n=c\beta$ which is proved to be optimal in \cite{ALSZ}; if $\beta(\omega)=0$, we obtain (\ref{set-delta0n}) which includes the result for the strong Diophantine number by Goldstein and Schlag. Correspondingly, the three LDTs we obtain in this paper can be unified into the following form:
  \begin{equation}
    \label{ldtforfan-gen}
   \mes\left\{ x\in\mathbb{T}:\,\left|\log\left|f^a_{n}\left(x\right)\right|-\left\langle \log\left|f^a_{n}\right|\right\rangle \right|>n\delta\right\} \le C\exp\left(-c\delta (\delta_{0}^n)^{-1}\right),\ \forall \delta>\delta_0^n.
  \end{equation}
   While, the exceptional measure in (\ref{ldtforfan-gen}) will not converge when $\delta_0^n=c\beta$! The method created by Goldstein and Schlag and applied in this paper should be improved for the Liouville frequency. We think it is a good question for our further research in the future.

   Here we need to emphasize that our paper is not weaker version of \cite{T18}. That shows that the smallest deviation is $\delta_0^n=c\beta$, and then the strong Birkhoff ergodic theorem and the LDTs for matrices hold when the deviation is larger than $\delta_0^n$. Letting the positive Lyapunov exponent be this deviation, our second author obtained the H\"older continuity of the Lyapunov exponent. However, if we applied these results in our condition that $\beta=0$, then the smallest deviation is $0$! It is absurd! So, compared to \cite{T18}, the main aim of our second section is to find the smallest deviation when $\beta=0$. What's more, we will find that in technology the key  is to estimate $\sum_{j=1}^{2m-1}\frac{q_{s-j+1}}{q_{s-j}}\log q_{s-j+1}$. It is  easy when $\beta>0$:
   \[ \sum_{j=1}^{2m-1}\frac{q_{s-j+1}}{q_{s-j}}\log q_{s-j+1}\le 2\beta \sum_{j=1}^{2m-1}q_{s-j+1}\le 2\beta n.\]
   While, when $\beta=0$, the fact that $\left\{\frac{\log q_{s+1}}{q_s}\right\}_{s=1}^{\infty}$ has different speeds, which depend on $\Delta(t)$,  to converge to $0$ makes this estimation much harder. On the other hand, the aims of our Section 3 and 4 are to obtain the LDT for $f_n^a$ and its applications, which are nonexistent in \cite{T18}. In summary, the focus point of our paper is to show the importance of the smallest deviation of the strong Birkhoff ergodic theorem and calculate it when $\beta=0$. Of course, when we need the LDTs for matrices and the H\"older continuity of the Lyapunov exponent, such as Lemma \ref{ldtformatrices} and \ref{holder-contin}, we can use the results from \cite{T18} directly.~\\

At last, we have an application of our LDTs, which estimates the upper bound on the number of eigenvalues of $H_n(x,\omega)$ contained in an interval of size $\left(\delta_0^n\right)^{\frac{1}{h}}$, where $h$ is the H\"older exponent of the H\"older continuity of $L(E,\omega)$, see Lemma \ref{holder-contin}.  The distribution of the eigenvalues is very important in the further study of the spectrum problem for discrete quasiperiodic operators of second order. With fixed $x$ and $\omega$, the matrix $H_n(x,\omega)$ has $n$ eigenvalues. So we have an intuition that these eigenvalues have a more uniform distribution when the frequency $\omega$ is ``more irrational". For the Brjuno-R\"ussman number, it means that $\Delta(t)$ grows more slowly and then $\delta_0^n$ is smaller. The following theorem verifies our intuition:
\begin{theorem}
\label{thm:Number-of-eigenvalues} Let $\omega$ be the Brjuno-R\"ussmann number satisfying Hypothesis H.1, or H.2, or H.3 and $L(E,\omega)>0$. Then, for any $x_{0}\in\TT$ and $E_{0}\in\mathbb{R}$,
\[
\#\left\{ E\in\mathbb{R}:\, f_{n}^{a}\left(x_{0},E,\omega\right)=0,\,\left|E-E_{0}\right|<\left(\delta_0^n\right)^{\frac{1}{h}}\right\} \le 13n\delta_0^n.
\]
\end{theorem}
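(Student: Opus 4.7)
The plan is to view $P(E) := f_n^a(x_0, E, \omega)$ as a polynomial in $E \in \mathbb{C}$ of degree $n$ whose zeros are exactly the real eigenvalues of $H_n(x_0,\omega)$, and to count those zeros in a small disk around $E_0$ via Jensen's formula. Expanding the determinant \eqref{Dirichlet-det} along the main diagonal shows that the leading coefficient of $P$ in $E$ is $(-1)^n$, and self-adjointness of $H_n(x_0,\omega)$ places all its zeros on the real line.

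Set $r := (\delta_0^n)^{1/h}$, fix a parameter $\eta$ of order $r$, put $\tilde E_0 := E_0 + i\eta$, and choose an outer radius $R$ of order $r$ so that $\log(R/(r+\eta))$ is an absolute positive constant. Since the real zeros of $P$ inside $\{|E - E_0| < r\}$ sit in the complex disk $\{|E - \tilde E_0| < r + \eta\}$, Jensen's inequality applied at $\tilde E_0$ reads
\[
N \cdot \log\frac{R}{r+\eta} \le \frac{1}{2\pi}\int_0^{2\pi}\log\bigl|P(\tilde E_0 + Re^{i\theta})\bigr|\,d\theta - \log|P(\tilde E_0)|,
\]
where $N$ denotes the target eigenvalue count.

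Both terms on the right are controlled by combining the unified LDT \eqref{ldtforfan-gen} with the Hölder continuity of $L(E,\omega)$ in $E$ from Lemma \ref{holder-contin}, and with the subharmonicity of $x\mapsto \log|f_n^a(x,E,\omega)|$ on a complex strip around $\mathbb{T}$. At each $E$ on the outer circle, the LDT — promoted to a pointwise-in-$x_0$ bound via subharmonicity — yields $\log|f_n^a(x_0, E, \omega)| \le n L(E, \omega) + Cn\delta_0^n$; together with $|L(E,\omega) - L(E_0,\omega)| \le C R^h \le C\delta_0^n$ this gives $\tfrac{1}{2\pi}\int_0^{2\pi}\log|P(\tilde E_0+Re^{i\theta})|\,d\theta \le n L(E_0, \omega) + Cn\delta_0^n$. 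The LDT applied at the complex point $\tilde E_0$, promoted similarly, delivers the matching lower bound $\log|P(\tilde E_0)| \ge n L(E_0, \omega) - Cn\delta_0^n$. Substituting both into Jensen's inequality the $n L(E_0,\omega)$ terms cancel, leaving $N \cdot \log(R/(r+\eta)) \le Cn\delta_0^n$; a careful choice of $R$ and $\eta$ so that $\log(R/(r+\eta))$ is explicitly bounded below produces the numerical constant $13$.

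The main obstacle is the promotion of the measure-theoretic LDT \eqref{ldtforfan-gen} to pointwise upper and lower bounds valid for \emph{every} $x_0 \in \mathbb{T}$, rather than for almost every $x_0$ — the trivial deterministic lower bound $\log|P(\tilde E_0)|\ge n\log\eta$ is far too weak, since our $\eta$ is of order $(\delta_0^n)^{1/h}$. Resolving this relies on the subharmonicity of $\log|f_n^a(\cdot, E, \omega)|$ on a complex neighborhood of $\mathbb{T}$: the exceptional set from the LDT has exponentially small measure, and a Cartan-type argument applied to the subharmonic envelope forces the pointwise value at every $x_0$ to lie within $O(n\delta_0^n)$ of the mean. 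A secondary technical point is the balance between $\eta$, $r$ and $R$: $\eta$ must be small enough that the counting disk around $\tilde E_0$ captures the real zeros near $E_0$, while $R$ must be small enough that Hölder continuity of $L$ over $|E-E_0|\le R$ contributes only $O(\delta_0^n)$; once this balance is set, the remainder is bookkeeping.
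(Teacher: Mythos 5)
Your overall frame (Jensen's formula on a disk of radius comparable to $\left(\delta_0^n\right)^{1/h}$, upper bound on the circle from the LDT plus H\"older continuity of $L$ in $E$, lower bound at the center) is the paper's frame, but the step you yourself flag as the ``main obstacle'' is a genuine gap, not a technical point. Subharmonicity of $\log|f_n^a(\cdot,E,\omega)|$ in the $x$-variable does promote the LDT to a pointwise \emph{upper} bound at every $x_0$ (this is exactly Lemma \ref{M^a-upper-bound}: the value at a point is at most the average over a small disk, and the average is controlled off a small set whose contribution is absorbed by the a priori bound). It cannot promote the LDT to a pointwise \emph{lower} bound: a Cartan-type estimate (Lemma \ref{carest}) only yields lower bounds outside an exceptional collection of disks of small total radius, and your fixed $x_0$ may well lie in that exceptional set. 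Worse, the bound you want, $\log|f_n^a(x_0,E_0+i\eta)|\ge nL^a_n-Cn\delta_0^n$ with $\eta\sim(\delta_0^n)^{1/h}$ for \emph{every} $x_0$, is essentially equivalent to the conclusion of the theorem: since $f_n^a(x_0,E)=\prod_j\bigl(E_j^{(n)}(x_0)-E\bigr)$, having $N$ eigenvalues within distance $r=(\delta_0^n)^{1/h}$ of $E_0$ forces $\log|f_n^a(x_0,E_0+i\eta)|\lesssim nC+N\log(Cr)$, so asserting the lower bound at every $x_0$ is asserting the eigenvalue count. No general subharmonicity or Cartan principle supplies it, and invoking it makes the argument circular.

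The paper's proof avoids this by never requiring a lower bound at $x_0$ itself: using Fubini/Chebyshev on the two-variable exceptional set of the LDT, it picks a point $(x_1,E_1)$ with $|x_1-x_0|,|E_1-E_0|\le C\exp\bigl(-c(\delta_0^n)^{-1/2}\bigr)$ at which the lower bound (\ref{f^a-lb}) does hold, runs Jensen's formula centered at $(x_1,E_1)$ to get $\cN_{x_1,E_1}(3R)\le 13n\delta_0^n$, and then transfers the count back to $x_0$ by eigenvalue perturbation for Hermitian matrices: $\|H_n(x_0,\omega)-H_n(x_1,\omega)\|\le C|x_0-x_1|$, so each $E_j^{(n)}$ moves by at most $C\exp\bigl(-c_0(\delta_0^n)^{-1/2}\bigr)\ll R$, giving $\cN_{x_0,E_0}(R)\le\cN_{x_1,E_0}(2R)\le\cN_{x_1,E_1}(3R)$. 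This two-step mechanism --- move to a nearby good point, then use Lipschitz stability of the (real) spectrum in $x$ --- is exactly what is missing from your proposal, and without it the lower bound at the Jensen center is unsupported. Your choice of a complex center $E_0+i\eta$ only buys the trivial bound $\eta^n$, which, as you note, is far too weak; the repair is not a stronger pointwise estimate but the relocation-plus-perturbation argument above.
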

~\\

We organize this paper as follows. In Section 2, we prove Theorem \ref{sbet}, the strong Birkhoff Ergodic theorem for the subharmonic function shifting on $\mathbb{T}$ with our Brjuno-R\"ussmann frequency. We apply it to the analytic quasi-periodic Jacobi operator and obtain Theorem \ref{ldtforfan}, Theorem \ref{ldth1} and Theorem \ref{ldth2} in Section 3, which are all the LDTs for $f_n^a(x,E,\omega)$ with different hypothesises. Then, we prove Theorem \ref{thm:Number-of-eigenvalues}, an application of them, in the last section.

\section{Strong Birkhoff Ergodic Theorem for Subharmonic Functions with the Brjuno-R\"ussmann shift}
Let $\{x\}=x-[x]$. For any positive integer $q$, complex number $\zeta=\xi+i\eta$ and $0\leq x<1$, define
 \begin{equation}
  F_{q,\zeta}(x)=\sum_{0\leq k<q}\log |\{x+k\omega\}-\zeta|\ \mbox{and}\  I(\zeta)=\int_0^1\log |y-\zeta |dy.
 \end{equation}
Let $|\{x+k_0\omega\}-\xi|=\min_{k=1}^{q_s} |\{x+k\omega\}-\xi|$, where $q_s$ is the denominator of the continued fraction approximants.
In \cite{GS01}, Goldstein and Schlag proved Lemma 3.1 that for any irrational $\omega$, there exists an absolute constant $C$ such that
 \begin{equation}\label{lemir}
   \left |F_{q_s,\zeta}(x)-q_sI(\zeta)\right |\leq C\log q_s+\left |\log |\{x+k_0\omega\}-\zeta|\right |.
 \end{equation}
Then
\begin{lemma}\label{lemlq}
 For any irrational $\omega$,
  \begin{equation}
  |F_{l_sq_s,\zeta}(x)-l_sq_s I(\zeta)|<Cl_s\log q_s+| \log D(x-\xi,-\omega,l_sq_s)|+ 2l_s\log q_{s+1},
\end{equation}
where $D(x,\omega,n):=\min_{k=0}^{n-1}\{x+k\omega\}$.
\end{lemma}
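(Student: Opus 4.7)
The plan is to decompose the sum of length $l_sq_s$ into $l_s$ consecutive blocks of length $q_s$ and apply (\ref{lemir}) to each block separately. Writing
\[
F_{l_sq_s,\zeta}(x)=\sum_{j=0}^{l_s-1}F_{q_s,\zeta}(x+jq_s\omega)
\]
and invoking (\ref{lemir}) with $x$ replaced by $x+jq_s\omega$, I would obtain
\[
\bigl|F_{l_sq_s,\zeta}(x)-l_sq_sI(\zeta)\bigr|\le Cl_s\log q_s+\sum_{j=0}^{l_s-1}\bigl|\log|y_j-\zeta|\bigr|,
\]
where $y_j:=\{x+m_j\omega\}$ with $m_j:=jq_s+k_0^{(j)}\in[jq_s+1,(j+1)q_s]$ is the orbit point of the $j$-th block closest to $\xi$. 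The proof thus reduces to controlling $\sum_j\bigl|\log|y_j-\zeta|\bigr|$ by $\bigl|\log D(x-\xi,-\omega,l_sq_s)\bigr|+2l_s\log q_{s+1}$.

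The geometric heart of the argument is the following separation claim: the $l_s$ points $y_j\in\TT$ are pairwise at distance at least $\tfrac{1}{2q_{s+1}}$ from one another. Indeed, since the intervals $[jq_s+1,(j+1)q_s]$ are disjoint, the $m_j$ are distinct integers with $0<|m_j-m_{j'}|<l_sq_s\le q_{s+1}$, and by the best-approximation property of the convergents we get $\|(m_j-m_{j'})\omega\|\ge\|q_s\omega\|>\tfrac{1}{2q_{s+1}}$. Ordering the distances $d_j:=|y_j-\xi|$ as $d_{(1)}\le\cdots\le d_{(l_s)}$, a pigeonhole argument yields $d_{(k)}\ge(k-1)/(4q_{s+1})$ for every $k\ge 2$, while $d_{(1)}=\min_{1\le m\le l_sq_s}|\{x+m\omega\}-\xi|$ is comparable to the one-sided fractional-part minimum $D(x-\xi,-\omega,l_sq_s)$ up to an absolute constant, after accounting for the passage between the two-sided torus distance and the one-sided fractional-part minimum on $\TT$.

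Combining these bounds,
\[
\sum_{j=0}^{l_s-1}\bigl|\log|y_j-\zeta|\bigr|\le\bigl|\log d_{(1)}\bigr|+\sum_{k=2}^{l_s}\log\frac{4q_{s+1}}{k-1}\le\bigl|\log D(x-\xi,-\omega,l_sq_s)\bigr|+2l_s\log q_{s+1},
\]
after discarding the nonpositive term $-\log((l_s-1)!)$ and absorbing absolute constants into $q_{s+1}$. Substituting this back into the block decomposition will complete the proof. The main obstacle I foresee is the Diophantine/combinatorial separation step: the admissible index differences $m_j-m_{j'}$ range over nearly all of $[1,l_sq_s]$, so the best-approximation bound must hold uniformly (which forces the natural regime $l_sq_s\le q_{s+1}$ implicit in the statement); a secondary technicality is reconciling the one-sided fractional-part quantity defining $D$ with the two-sided modulus coming from (\ref{lemir}), which is harmless up to the $\log q_{s+1}$ slack already present in the target bound.
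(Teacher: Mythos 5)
Your proposal is correct and follows essentially the same route as the paper: decompose $F_{l_sq_s,\zeta}$ into $l_s$ blocks of length $q_s$, apply (\ref{lemir}) blockwise, and then show that among the $l_s$ block minimizers only the globally closest point can lie within $\sim\frac{1}{4q_{s+1}}$ of $\xi$, so it alone contributes $|\log D(x-\xi,-\omega,l_sq_s)|$ while each remaining block contributes at most $2\log q_{s+1}$. The only difference is in how the separation is established: you obtain pairwise $\frac{1}{2q_{s+1}}$-separation of the block minimizers directly from the best-approximation property $\|k\omega\|\ge\|q_s\omega\|>\frac{1}{2q_{s+1}}$ for $0<k<q_{s+1}$ (using $l_sq_s<q_{s+1}$, which indeed holds in every application of the lemma, since $l_s=[n/q_s]$ with $n<q_{s+1}$), together with a pigeonhole count, whereas the paper argues in two steps (uniqueness of a close point within each block, then a window argument with $Q=[\frac{q_{s+1}}{q_s}]$ based on $\frac{1}{2q_{s+1}}<|q_s\omega-p_s|<\frac{1}{q_{s+1}}$); both yield the stated bound, and the residual issues you flag (the one-sided $D$ versus two-sided distance, constants for $|\zeta|$ large) are glossed over in exactly the same way in the paper's own proof.
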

\begin{proof}
Define $x_h=x+hq_s \omega$ and $|\{x_h+k_h\omega\}-\xi|=\min_{k=0}^{q_s-1}|\{x_h+k\omega\}-\xi|$.
Due to (\ref{lemir}), we have
\begin{equation}\label{FnIn}
  |F_{l_sq_s,\zeta}(x)-l_sq_s I(\zeta)|\leq \sum_{h=0}^{l_s-1}  |F_{q_s,\zeta}(x_h)-l_sq_s I(\zeta)|     \leq\sum_{h=0}^{l_s-1}\left |\log |\{x_h+k_h\omega\}-\zeta|\right |+Cl_s\log q_s.
\end{equation}

We declare that  if there exists $0\leq j<q_s$ such that $|\{x+j\omega\}-\xi|\leq \frac{1}{2q_s}-\frac{1}{q_{s+1}}$, then $j=k_0$.  Actually, if $|\{x+j\omega\}-\xi|\leq \frac{1}{2q_s}-\frac{1}{q_{s+1}}$ and $j \not =k_0$, then $|\{x+k_0\omega\}-\xi|\leq |\{x+j\omega\}-\xi|\leq \frac{1}{2q_s}-\frac{1}{q_{s+1}}$, which implies
\[\left|\{x+k_0\omega\}-\{x+j\omega\}\right|\leq \frac{1}{q_s}-\frac{2}{q_{s+1}}.\]
Due to (\ref{irtor0}), it has
\begin{equation}\label{irtor}
 \frac{k}{q_s(q_{s+1}+q_s)}<|k\omega-\frac{k p_s}{q_s}|<\frac{k}{q_sq_{s+1}}\leq \frac{1}{q_{s+1}},\ \ 0<k< q_s.
\end{equation}
Then,
\[\left |\{x+j\frac{p_s}{q_s}\}-\{x+k_0\frac{p_s}{q_s}\}\right |<\frac{1}{q_s}.\] It is a contraction. Thus, there is at  most  one integer $0\leq k_0<q_s$ such that $|\{x+k_0\omega\}-\xi|< \frac{1}{2q_s}-\frac{1}{q_{s+1}}$ and
\begin{equation}\label{10004}
 |\{x+k\omega\}-\xi|>\frac{1}{2q_s}-\frac{1}{q_{s+1}}>\frac{1}{4q_{s}},\ \ k\not =k_0.
\end{equation}

Due to (\ref{irtor0}) again, it has
\begin{equation}\label{10012}  \frac{1}{2q_{s+1}}<\frac{1}{q_s+q_{s+1}}<|q_s\omega-p_s|<\frac{1}{q_{s+1}}.\end{equation}
Define $Q=[\frac{q_{s+1}}{q_s}]$ and
 let $j$ be the  number such that $ |\{x_j+k_j\omega\}-\xi|<\frac{1}{4q_{s+1}}$.
 Then by (\ref{10012}) and the above declaration, we have for any $ j-2Q+1\leq h<j$ and $j<h\leq j+2Q-1$,
 \[ |\{x_h+k_h\omega\}-\xi|>\frac{1}{4q_{s+1}}.\]
 Thus there are at most one point which is small than $\frac{1}{4q_{s+1}}$.  Combining it with (\ref{FnIn}), we have
\begin{eqnarray}
  |F_{l_sq_s,\zeta}(x)-l_sq_s I(\zeta)|&\leq &| \log D(x-\xi,-\omega,l_sq_s)|+Cl_s\log q_s +l_s\left |\log \frac{1}{4q_{s+1}}\right | \nonumber\\
  &\leq & | \log D(x-\xi,-\omega,l_sq_s)|+Cl_s\log q_s +2l_s\log q_{s+1} \nonumber.
\end{eqnarray}
\end{proof}

\begin{lemma}For any compact $\Omega\subset \IC$,
there exist constants  $\tilde c=\tilde c(\omega)$ and $\tilde C=\tilde C(\Omega,\omega)$ such that for any $n\ge 1$, $\zeta\in \Omega$ and  $0<\sigma\leq \tilde c$, we have
\begin{equation}
  \label{uni-int-equ}\int_0^1\exp(\sigma |F_{n,\zeta}(x)-n I(\zeta)|)dx \le  \exp\left
( \tilde  C\sigma n\breve \delta_0^n  \right ).
\end{equation}
where
\begin{equation}\label{set-br-delta0n}
  \breve\delta_0^n=\left\{\begin{array}
    {cccc}
    \breve \delta_{H.1}(n):=&\frac{\log\Delta(n)}{\left(\Delta^{-1}(C_{\omega}n)\right)^{1-}},&\ &\mbox{if}\ \omega\ \mbox{satisfies H.1},\\
     \breve \delta_{H.2}(n):=&\frac{1}{\left[\log(\Delta^{-1}(C_\omega n))\right]^{1-}},&\ &\mbox{if}\ \omega\ \mbox{satisfies H.2},\\
     \breve \delta_{H.3}(n):=&\frac{\log ( C_{\omega}n)}{\left[\Delta^{-1}(C_{\omega}n)\right]^{1-}} ,&\ &\mbox{if}\ \omega\ \mbox{satisfies H.3}.
  \end{array}\right.
\end{equation}
\end{lemma}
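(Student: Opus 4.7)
The plan is to reduce the desired integral estimate to Lemma 2.2 via an Ostrowski-type decomposition of $n$ in terms of continued-fraction denominators. First, I pick the unique $s$ with $q_s\le n<q_{s+1}$ and write $n=\sum_{j=0}^{m}l_{s-j}q_{s-j}$ with $0\le l_{s-j}\le q_{s-j+1}/q_{s-j}$. Telescoping the shifts $y_j := x+\bigl(\sum_{i<j}l_{s-i}q_{s-i}\bigr)\omega$ gives
\[
F_{n,\zeta}(x)-n\,I(\zeta)=\sum_{j=0}^{m}\Bigl(F_{l_{s-j}q_{s-j},\,\zeta}(y_j)-l_{s-j}q_{s-j}\,I(\zeta)\Bigr),
\]
and each summand is controlled through Lemma 2.2 by
\[
C\,l_{s-j}\log q_{s-j}+\bigl|\log D(y_j-\xi,-\omega,l_{s-j}q_{s-j})\bigr|+2 l_{s-j}\log q_{s-j+1},\qquad \xi=\Re\zeta.
\]

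Next, I would apply H\"older's inequality across the $m+1$ ``log-minimal-gap'' factors in $\exp\bigl(\sigma\sum_j|\log D_j|\bigr)$. Translation-invariance of Lebesgue measure on $\TT$ turns each factor into the integral of $D(\cdot,-\omega,l_{s-j}q_{s-j})^{-\sigma(m+1)}$, which a standard layer-cake estimate combined with the trivial density bound $\mes\{D<t\}\le l_{s-j}q_{s-j}\,t$ dominates by $\exp(\tilde C_1\sigma\log n)$ for $\sigma$ small. Since denominators grow at least geometrically, $m+1=O(\log n)$, so the total ``random'' contribution is at most $\exp(\tilde C_2\sigma\log^2 n)$, which is absorbed into $\exp(\tilde C\sigma n\breve\delta_0^n)$ whenever $n\breve\delta_0^n\gg\log^2 n$ — a condition built into each of the three prescribed $\breve\delta_0^n$. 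It is precisely the use of a H\"older exponent $m+1$ here that produces the small loss encoded by $(\cdot)^{1-}$ in the exponent of $\Delta^{-1}(C_\omega n)$.

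The core difficulty — explicitly flagged by the authors as the reason $\beta=0$ is delicate — is the deterministic bound
\[
\sum_{j=0}^{m}l_{s-j}(\log q_{s-j}+\log q_{s-j+1})\;\le\;\tilde C\,n\,\breve\delta_0^n.
\]
The leading block $j=0$ already yields $l_s\log q_{s+1}\le(n/q_s)\log\Delta(q_s)$ via the Brjuno-R\"ussmann inequality $q_{s+1}\le\Delta(q_s)/C_\omega$; combined with $q_s\ge\Delta^{-1}(C_\omega n)$ (from $n\le q_{s+1}$) this alone produces $\breve\delta_{H.1}(n)\cdot n$. Under Hypothesis H.3 the monotonicity of $\log\Delta(t)/t$ makes the sum over $j\ge 1$ absorb into the leading term without the $\log\Delta(n)$ factor, yielding $\breve\delta_{H.3}(n)$; under Hypothesis H.2 the upper bound $\Delta(t)<\exp(t/\log t)$ implies $\log\Delta(q_s)/q_s\lesssim 1/\log q_s$, translating into $\breve\delta_{H.2}(n)$. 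Executing these three parallel estimates of $\sum_j\frac{q_{s-j+1}}{q_{s-j}}\log q_{s-j+1}$, using Hypothesis H.1 to rule out pathological jumps of $\Delta$, is where essentially all the technical work lies.

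Finally, the complex case $\zeta=\xi+i\eta\in\Omega$ follows by the sandwich $|\{x+k\omega\}-\xi|\le|\{x+k\omega\}-\zeta|\le|\{x+k\omega\}-\xi|+M$, with $M=\sup_{\Omega}|\Im\zeta|<\infty$ since $\Omega$ is compact; thus the real-axis estimates propagate with only absolute-constant adjustments to $\tilde c$ and $\tilde C$.
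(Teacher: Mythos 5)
Your argument is, in substance, the paper's own: the same decomposition of $n$ along continued--fraction denominators, the same reduction of each block $F_{l_{s-j}q_{s-j},\zeta}(y_j)-l_{s-j}q_{s-j}I(\zeta)$ via the block estimate (which is Lemma \ref{lemlq}, not the lemma under proof, as your ``Lemma 2.2'' label suggests) to a deterministic term plus one log--distance singularity, the same integration of that singularity through the counting bound behind (\ref{intdis}), and the same three hypothesis-wise estimates of $\frac{n}{q_s}\log q_{s+1}+\sum_j\frac{q_{s-j+1}}{q_{s-j}}\log q_{s-j+1}$ built on $q_{s+1}<\Delta(q_s)/C_\omega$, $q_s>\Delta^{-1}(C_\omega n)$ and the monotonicity facts attached to H.1--H.3. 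The only organisational differences are that you expand $n$ fully in Ostrowski form and apply H\"older once with exponent $m+1$ over all digits, while the paper iterates Cauchy--Schwarz with dyadically growing exponents, stops after $2m$ steps with $m$ chosen per hypothesis, and kills the remainder using $q_s>2^m q_{s-2m}$; consequently your arithmetic sum runs over all $O(\log n)$ digits instead of $2m-1$ terms, which costs only logarithmic factors that, as you correctly observe, are absorbed by the $(\cdot)^{1-}$ since $n\breve\delta_0^n\gg\log^2 n$ in all three cases.

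The one step you should not pass over silently is the admissible range of $\sigma$. H\"older with exponent $m+1$ requires finiteness of $\int_0^1 D(x-\xi,-\omega,l_{s-j}q_{s-j})^{-\sigma(m+1)}dx$, and since $D$ vanishes linearly at finitely many points this integral diverges as soon as $\sigma(m+1)\ge1$; your layer-cake bound is only valid for $\sigma(m+1)<1$. With $m+1\asymp\log n$ your argument therefore establishes (\ref{uni-int-equ}) only for $\sigma\lesssim 1/\log n$, whereas the statement claims a fixed $\tilde c=\tilde c(\omega)$ independent of $n$, and this uniformity is exactly what is used afterwards in the proof of Theorem \ref{sbet}, where $\sigma$ must be an $n$-independent constant for the Markov inequality to yield $\exp(-c\delta n)$ with constant $c$. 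To be fair, the paper's own dyadic scheme carries the parallel hidden constraint $2^{2m+1}\sigma<1$ coming from applying (\ref{intdis}) at exponent $2^{i+1}\sigma$ at every level, which is even more restrictive than yours, so your variant is not lossier; but your phrase ``for $\sigma$ small'' conceals an $n$-dependence, and if your write-up is meant to prove the lemma as stated you must either record the $\sigma$-range you actually obtain or treat the accumulated log--singularities by a device that does not raise the integrability exponent on every single block.
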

\begin{proof}
We first apply Lemma 3.2 in \cite{GS01}. It says that if $\Omega\subset \mathbb{T}$ is an arbitrary finite set, then for any $0<\sigma<1$,
\begin{equation}\label{intdis}
\int_{\mathbb{T}}\exp\left (\sigma |\log dist (x,\Omega)|\right )dx\leq \frac{2^{\sigma}}{1-\sigma}(\sharp \Omega)^{\sigma}.
\end{equation}
Set $\Omega=\{-m\omega:0\leq m<l_sq_s\}$. Then $\sharp \Omega=l_sq_s$ and $dist (x-\xi,\Omega)=D(x-\xi,-\omega,l_sq_s)$. Thus, by (\ref{intdis}),
  \[\int_{\mathbb{T}}\exp\left (\sigma |\log D(x-\xi,-\omega,l_sq_s)|\right )dx=\int_{\mathbb{T}}\exp\left (\sigma |\log dist (x,\Omega)|\right )dx\leq  \frac{2^{\sigma}}{1-\sigma} (l_sq_s)^{\sigma}.\]
  By Lemma \ref{lemlq}, we have
  \[ \int_{\mathbb{T}}\exp\left (\sigma|F_{l_sq_s,\zeta}(x)-l_sq_s I(\zeta)|\right )dx\leq \exp\left(2C\sigma \log (l_sq_s) + C\sigma l_s\log q_s+2\sigma l_s\log q_{s+1}\right)<\exp\left(5\sigma l_s\log q_{s+1}\right). \]
Now for any  $n$, there exist  $q_s$  and $q_{s+1}$ such that $q_s\leq n<q_{s+1}$. Let $n=l_sq_{s}+r_s$, where
$l_s=[\frac{n}{q_{s}}]$, $ 0\leq r_s=n-l_sq_s<q_{s}$. Then,
\begin{eqnarray*}
&&\int_0^1\exp(\sigma |F_{n,\zeta}(x)-n I(\zeta)|)dx \\
&\leq & \left
[\int_0^1\exp(2 \sigma |F_{l_sq_{s},\zeta}(x)-l_sq_{s}
I(\zeta)|)dx\right ]^{\frac{1}{2}}\times \left [\int_0^1\exp(2 \sigma
|F_{r_s,\zeta}(x)-r_s
I(\zeta)|)dx\right ]^{\frac{1}{2}}\nonumber\\
&\leq & \exp(5\sigma l_s\log q_{s+1}) \left [\int_0^1\exp(2\sigma |F_{r_s,\zeta}(x)-r_s
I(\zeta)|)dx\right ]^{\frac{1}{2}}.\nonumber
\end{eqnarray*} Let $r_{s-i+1}=l_{s-i}q_{s-i}+r_{s-i}$, where
$l_{s-i}=[\frac{r_{s-i+1}}{q_{s-i}}]$, $ 0\leq r_{s-i}=r_{s-i+1}-l_{s-i}q_{s-i}<q_{s-i}$. Then
\begin{eqnarray*}
  && \left
[\int_0^1\exp(2^{i}\sigma |F_{r_{s-i+1},\zeta}(x)-r_{s-i+1} I(\zeta)|)dx\right
]^{\frac{1}{2^i}}\\
&\leq & \left
[\int_0^1\exp(2^{i+1}\sigma |F_{l_{s-i}q_{s-i},\zeta}(x)-l_{s-i}q_{s-i}
I(\zeta)|)dx\right ]^{\frac{1}{2^{i+1}}}
\cdot \left [\int_0^1\exp(2^{i+1}\sigma
|F_{r_{s-i},\zeta}(x)-r_{s-i}
I(\zeta)|)dx\right ]^{\frac{1}{2^{i+1}}}\nonumber\\
&\leq & \exp\left (5\sigma l_{s-i}\log q_{s-i+1}  \right)\times \left
[\int_0^1\exp(2^{i+1}\sigma |F_{r_{s-i},\zeta}(x)-r_{s-i} I(\zeta)|)dx\right
]^{\frac{1}{2^{i+1}}}\nonumber.
\end{eqnarray*}
Note that for any irrational $\omega$, the denominators of its continued  fraction approximates satisfy
\[q_{n+1}=a_{n+1}q_n+q_{n-1}>2q_{n-1}.\]
Thus
\[q_n> 2^{m}q_{n-2m}.\]
 Therefore, if
$\zeta\in \Omega'$, where $\Omega'$ is a compact subregion of $\mathbb{C}$, then
\begin{eqnarray}
\int_0^1\exp(\sigma |F_{n,\zeta}(x)-n I(\zeta)|)dx &\leq & \exp\left
[ 5\sigma \left(l_s\log q_{s+1}+l_{s-1}\log q_{s}+\cdots+l_{s-2m+1}\log q_{s-2m+2} \right) \right ]\nonumber\\
&&\ \ \times \left [\int_0^1\exp(2^{2m+1} \sigma
|F_{r_{s-2m},\zeta}(x)-r_{s-2m}
I(\zeta)|)dx\right ]^{\frac{1}{2^{2m+1}}}\nonumber\\
&\leq & \exp\left
\{  5\sigma \left[\frac{n}{q_s}\log q_{s+1}+\sum_{j=1}^{2m-1}\frac{q_{s-j+1}}{q_{s-j}}\log q_{s-j+1}\right]+\hat C(\zeta)\sigma q_{s-2m}\right \}\nonumber\\
&\leq & \exp\left
\{  5\sigma \left[\frac{n}{q_s}\log q_{s+1}+\sum_{j=1}^{2m-1}\frac{q_{s-j+1}}{q_{s-j}}\log q_{s-j+1}+C'2^{-m}\sigma q_{s}\right]\right \}\label{int01},
\end{eqnarray}where $C'=C'(\Omega)$.~\\

Assume that $\omega$ satisfies Hypothesis H.1. Then, we assert that $\frac{t\log t}{\Delta^{-1}\left(C_{\omega}t\right)}$ is  monotone increasing. Indeed, let $y=\Delta^{-1}\left(C_{\omega}t\right)$. Then, due to the hypothesis that
\[y\Delta'(y)> \Delta(y),\]
it yields that
\[\Delta^{-1}\left(C_{\omega}t\right)\Delta'\left(\Delta^{-1}\left(C_{\omega}t\right)\right)> C_{\omega}t\ge C_{\omega}t-\frac{C_{\omega}t}{\log t+1}.\]
Combining it with the fact that
\[\frac{1}{\left(\Delta^{-1}\right)'\left(C_{\omega}t\right)}=\Delta'\left(\Delta^{-1}\left(C_{\omega}t\right)\right),\]
we have
\begin{equation}
  \label{numerator}
  \Delta^{-1}\left(C_{\omega}t\right)\left(\log t+1\right)>C_{\omega}t \Delta'\left(\Delta^{-1}\left(C_{\omega}t\right)\right)\log t.
\end{equation}
Now we finish the proof of the assertion as (\ref{numerator}) shows that the numerator of the derivative of  $\frac{t\log t}{\Delta^{-1}\left(C_{\omega}t\right)}$ is positive.

Due to (\ref{brn}) and (\ref{irtor0}),
\begin{equation}\label{qq}
  C_{\omega}q_i<\Delta(q_{i-1}) \ \mbox{and}\ q_{i-1}>\Delta^{-1}(C_{\omega}q_i).
\end{equation}
Therefore,
\[\frac{q_i}{q_{i-1}}\log q_i<\frac{q_i}{\Delta^{-1}(C_{\omega}q_i)}\log q_i.\]
We apply the assertion and obtain
\begin{equation}
  \label{sum-qi}\sum_{j=1}^{2m-1}\frac{q_{s-j+1}}{q_{s-j}}\log q_{s-j+1}<(2m-1)\frac{q_s}{\Delta^{-1}(C_{\omega}q_s)}\log q_s.
\end{equation}
Recall that $\Delta(t)$ is monotone increasing and continuous. And so is  $\Delta^{-1}(t)$. Combining it with (\ref{qq}), we have
\begin{equation}
  \label{qsdelta-1}
  q_s>\Delta^{-1}(C_\omega q_{s+1})>\Delta^{-1}(C_\omega n),
\end{equation}
and for any $n>n_0(\omega)$,
\begin{equation}
  \label{logqs1}l_{s}\log q_{s+1}\le \frac{n}{q_s}\log\left(\frac{\Delta(q_s)}{C_\omega}\right)\le \frac{2n\log \Delta(q_s)}{q_s}\le \frac{2n\log \Delta(q_s)}{\Delta^{-1}(C_\omega n)}\le \frac{2n\log \Delta(n)}{\Delta^{-1}(C_\omega n)}.
\end{equation}
Choose $m\sim \log_{  2}\Delta^{-1}(C_{\omega}n)$. Then
\[C' 2^{-m}q_s\lesssim \frac{n}{\Delta^{-1}(C_{\omega}n)}.\]
Combining it with (\ref{int01}), (\ref{sum-qi}), (\ref{logqs1}) and the assertion that $\frac{t\log t}{\Delta^{-1}\left(C_{\omega}t\right)}$ is  monotone increasing, we have
\begin{eqnarray}
  \int_0^1\exp(\sigma |F_{n,\zeta}(x)-n I(\zeta)|)dx &\leq &  \exp\left
\{  5\sigma \left[\frac{2n\log \Delta(n)}{\Delta^{-1}(C_\omega n)}+(2m-1)\frac{q_s}{\Delta^{-1}(C_{\omega}q_s)}\log q_s+\frac{n}{\Delta^{-1}(C_{\omega}n)}\right]\right \}\nonumber\\
&\leq & \exp\left
\{  5\sigma \left[\frac{2n\log \Delta(n)}{\Delta^{-1}(C_\omega n)}+2\log_{2}\Delta^{-1}(C_{\omega}n)\frac{n}{\Delta^{-1}(C_{\omega}n)}\log n+\frac{n}{\Delta^{-1}(C_{\omega}n)}\right]\right \}\nonumber\\
&\leq & \exp\left
\{ \tilde  C\sigma n\log\Delta(n)\frac{\log_{\bar C}\Delta^{-1}(C_{\omega}n)}{\Delta^{-1}(C_{\omega}n)}\right \}\nonumber\\
&\le &\exp\left
\{ \tilde  C\sigma n\frac{\log\Delta(n)}{\left(\Delta^{-1}(C_{\omega}n)\right)^{1-}}\right \}.\label{int-h1}
\end{eqnarray}

Assume that $\omega$ satisfies  Hypothesis H.2 which implies that $\Delta(t)<\exp\left(\frac{t}{\log t}\right)$ holds. Then
\begin{equation}\label{qsqs}
  \frac{\log \Delta(q_s)}{q_s}<\frac{1}{\log q_s}\ \mbox{and}\ \frac{\log n}{\Delta^{-1}(C_\omega n)} <  \frac{1}{\log \Delta^{-1}(C_\omega n)}.
\end{equation}
Combining them with (\ref{sum-qi}) and (\ref{qsdelta-1}),
\[l_{s}\log q_{s+1}<\frac{2n\log \Delta(q_s)}{q_s}<\frac{2n}{\log q_s}<\frac{2n}{\log(\Delta^{-1}(C_\omega n))},\]
\[\sum_{j=1}^{2m-1}\frac{q_{s-j+1}}{q_{s-j}}\log q_{s-j+1}<(2m-1)\frac{q_s}{\Delta^{-1}(C_{\omega}q_s)}\log q_s<(2m-1)\frac{n}{\Delta^{-1}(C_{\omega}n)}\log n< (2m-1)\frac{n}{\log(\Delta^{-1}(C_\omega n))}.\]
Let $m\sim \log_{ 2}\log \Delta^{-1}(C_\omega n)$. Thus,
\begin{equation}
  \label{barcmqs}2^{-m}q_s\le \frac{n}{\log \Delta^{-1}(C_\omega n)},
\end{equation}
and
\begin{eqnarray}
  \int_0^1\exp(\sigma |F_{n,\zeta}(x)-n I(\zeta)|)dx &\le&  \exp\left
\{ \tilde  C\sigma  \log_{\bar C}\log \Delta^{-1}(C_\omega n)\frac{n}{\log \Delta^{-1}(C_\omega n)}  \right \}\nonumber \\&\le& \exp\left
\{ \tilde  C\sigma \frac{n}{\left[\log(\Delta^{-1}(C_\omega n))\right]^{1-}}\right\}.\label{int-h2}
\end{eqnarray}

Assume the $\omega$ satisfies Hypothesis H.3 which implies  that $\frac{\log \Delta(t)}{t}$ is non-increasing. Due to (\ref{qsdelta-1}) and (\ref{logqs1}),
\begin{equation}
  \label{h3-logqs1}l_{s}\log q_{s+1}\le  \frac{2n\log \Delta(q_s)}{q_s}\le \frac{n\log \Delta(\Delta^{-1}(C_\omega n))}{\Delta^{-1}(C_\omega n)}=\frac{n\log ( C_{\omega}n)}{\Delta^{-1}(C_{\omega}n)}.
\end{equation}
By (\ref{sum-qi}),
\begin{equation}
  \label{h3-qi}\sum_{j=1}^{2m-1}\frac{q_{s-j+1}}{q_{s-j}}\log q_{s-j+1}<(2m-1)\frac{q_s}{\Delta^{-1}(C_{\omega}q_s)}\log q_s\le (2m-1)\frac{n}{\Delta^{-1}(C_{\omega}n)}\log n.
\end{equation}
Choose $m\sim \log_{2}(\Delta^{-1}(C_{\omega}n))$. Thus,
\[
 2^{-m}q_{s}\sim \frac{q_s}{\Delta^{-1}(C_{\omega}n)}<\frac{n\log n}{\Delta^{-1}(C_{\omega}n)}.
\]
Combining them, we have
\begin{equation}
  \label{h3}\int_0^1\exp(\sigma |F_{n,\zeta}(x)-n I(\zeta)|)dx \le  \exp\left
( \tilde  C\sigma  \frac{n\log ( C_{\omega}n)}{\left[\Delta^{-1}(C_{\omega}n)\right]^{1-}}  \right ).
\end{equation}
\end{proof}
\begin{remark}\label{remforhar}
  Note that $\log |x|$ is a subharmonic function. Thus, if $h$ is a 1-periodic harmonic function defined on a neighborhood of real axis, then for any positive $n$ and  $0<\sigma\leq \tilde c$, we have
\begin{equation}\label{12014}
 \int_0^1\exp(\sigma | \sum_{k=1}^n h(\{x+k\omega\})-n \int_0^1h(y)dy|)dx<\exp\left(\tilde C\sigma n\breve\delta_0^n\right).
\end{equation}
\end{remark}~\\

To finish the proof of Theorem \ref{sbet}, we need  the following Riesz's theorem proved in \cite{GS08}:
\begin{lemma}
\label{lem:riesz} Let $u:\Omega\to \IR$ be a subharmonic function on
a domain $\Omega\subset\IC$. Suppose that $\partial \Omega$ consists
of finitely many piece-wise $C^1$ curves. There exists a positive
measure $\mu$ on~$\Omega$ such that for any $\Omega_1\Subset \Omega$
(i.e., $\Omega_1$ is a compactly contained subregion of~$\Omega$),
\begin{equation}
\label{eq:rieszrep} u(z) = \int_{\Omega_1}
\log|z-\zeta|\,d\mu(\zeta) + h(z),
\end{equation}
where $h$ is harmonic on~$\Omega_1$ and $\mu$ is unique with this
property. Moreover, $\mu$ and $h$ satisfy the bounds \be
\mu(\Omega_1) &\le& C(\Omega,\Omega_1)\,(\sup_{\Omega} u - \sup_{\Omega_1} u), \label{21002} \\
\|h-\sup_{\Omega_1}u\|_{L^\infty(\Omega_2)} &\le&
C(\Omega,\Omega_1,\Omega_2)\,(\sup_{\Omega} u - \sup_{\Omega_1} u)
\label{21003} \ee for any $\Omega_2\Subset\Omega_1$.
\end{lemma}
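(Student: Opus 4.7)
The statement is the classical Riesz representation theorem for subharmonic functions, augmented with two quantitative bounds relating $\mu$ and $h$ to the $L^\infty$ oscillation of $u$ between $\Omega_1$ and $\Omega$. My plan has three movements: extract $\mu$ and the decomposition from general distribution-theoretic principles, bound $\mu(\Omega_1)$ via the Poisson--Jensen formula on an intermediate domain, and convert that into an $L^\infty$ bound on the harmonic remainder using two nested Riesz decompositions together with Harnack's inequality.

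\textbf{Step 1 (existence and uniqueness of $\mu$; the decomposition).} Subharmonicity of $u$ is equivalent to $\Delta u \ge 0$ in the sense of distributions on $\Omega$. Since every nonnegative distribution is a positive Radon measure, $\mu := (2\pi)^{-1}\Delta u$ is locally finite and positive, and uniqueness is immediate from uniqueness of the distributional Laplacian. For $\Omega_1 \Subset \Omega$, the logarithmic potential $v_\mu(z):=\int_{\Omega_1}\log|z-\zeta|\,d\mu(\zeta)$ is subharmonic on $\IC$ with $\Delta v_\mu = 2\pi\mu|_{\Omega_1}$, so $h:=u-v_\mu$ is distributionally harmonic on $\Omega_1$, and Weyl's lemma promotes $h$ to a smooth harmonic representative, giving \eqref{eq:rieszrep}.

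\textbf{Step 2 (bound on $\mu(\Omega_1)$).} Insert a smooth intermediate domain $\Omega_1 \Subset \tilde\Omega \Subset \Omega$ (available since $\partial\Omega$ is piecewise $C^1$). Use the Riesz--Poisson--Jensen representation on $\tilde\Omega$:
\[
u(z) = P(z) + 2\pi\int_{\tilde\Omega} G_{\tilde\Omega}(z,\zeta)\,d\mu(\zeta),\qquad z\in\tilde\Omega,
\]
where $P$ is the Poisson integral of $u|_{\partial\tilde\Omega}$ (harmonic, with $P\le \sup_\Omega u$ by the maximum principle) and $G_{\tilde\Omega}\le 0$ is the Green's function. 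The deficit $P(z)-u(z)=2\pi\int |G_{\tilde\Omega}(z,\zeta)|\,d\mu(\zeta)\ge 0$. Pick $z_0\in\Omega_1$ with $u(z_0)\ge\sup_{\Omega_1}u-\varepsilon$ (possible by upper semicontinuity of $u$ on the compact set $\overline{\Omega_1}$), and use $|G_{\tilde\Omega}(z_0,\zeta)|\ge c(\Omega,\Omega_1)>0$ for $\zeta\in\Omega_1$ (since $G_{\tilde\Omega}$ is strictly negative in $\tilde\Omega$, vanishes only on $\partial\tilde\Omega$, and $\Omega_1\Subset\tilde\Omega$) to obtain
\[
2\pi c\,\mu(\Omega_1) \le P(z_0)-u(z_0) \le \sup_\Omega u - \sup_{\Omega_1} u + \varepsilon,
\]
and $\varepsilon\to 0$ yields \eqref{21002}.

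\textbf{Step 3 (bound on $h$).} Decompose $2\pi G_{\tilde\Omega}(z,\zeta)=\log|z-\zeta|-2\pi H_\zeta(z)$ with $H_\zeta$ harmonic in $z$. Comparing the two Riesz decompositions $u=v_\mu+h$ on $\Omega_1$ and $u=P+2\pi\int G\,d\mu$ on $\tilde\Omega$ gives, for $z\in\Omega_1$,
\[
h(z) = P(z) + \int_{\tilde\Omega\setminus\Omega_1}\log|z-\zeta|\,d\mu(\zeta) - 2\pi\int_{\tilde\Omega}H_\zeta(z)\,d\mu(\zeta).
\]
On $\Omega_2\Subset\Omega_1$ the two correction integrals are each bounded in $L^\infty$ by $C(\Omega,\Omega_1,\Omega_2)\,\mu(\tilde\Omega)$ (the integrands are uniformly bounded since $z$ stays in the interior). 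Step 2 applied with $\tilde\Omega$ in place of $\Omega_1$ gives $\mu(\tilde\Omega)\le C(\sup_\Omega u - \sup_{\tilde\Omega}u)\le C(\sup_\Omega u - \sup_{\Omega_1}u)$. Finally, estimate $\|P-\sup_{\Omega_1}u\|_{L^\infty(\Omega_2)}$: the upper inequality is immediate from $P\le\sup_\Omega u$; for the lower one, apply Harnack's inequality on $\tilde\Omega$ to the nonnegative harmonic function $\sup_\Omega u - P$, anchored by the one-point estimate $\sup_\Omega u - P(z_0)\le \sup_\Omega u - u(z_0)\le \sup_\Omega u - \sup_{\Omega_1}u + \varepsilon$ (which uses $P\ge u$). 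Collecting all three terms gives \eqref{21003}.

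\textbf{Main obstacle.} The bulk of the work is in Step 3: converting the one-point estimate on $P(z_0)$ into a uniform $L^\infty(\Omega_2)$ estimate via Harnack's inequality requires geometric constants that depend on the triple $(\Omega_2,\Omega_1,\tilde\Omega)$, and the bookkeeping of the two error potentials (the partial log-potential on $\tilde\Omega\setminus\Omega_1$ and the $H_\zeta$-correction) must be executed so each is absorbed into $C\cdot\mu(\tilde\Omega)$. The Step 2 constant $c(\Omega,\Omega_1)$ also ultimately rests on the piecewise $C^1$ boundary hypothesis, which is what allows the smooth intermediate $\tilde\Omega$ to be constructed.
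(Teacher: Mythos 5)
The paper itself contains no proof of this lemma --- it is quoted from \cite{GS08} and used as a black box --- so there is nothing internal to compare against. Your argument is a correct, essentially self-contained proof along standard lines: Step~1 is the classical Riesz decomposition ($\mu=(2\pi)^{-1}\Delta u$ as a positive distribution, subtraction of the logarithmic potential, Weyl's lemma), and Steps~2--3 derive the quantitative bounds \eqref{21002}--\eqref{21003} from the Poisson--Jensen representation on an intermediate domain $\tilde\Omega$, using that $|G_{\tilde\Omega}|$ is bounded below on $\overline{\Omega_1}\times\overline{\Omega_1}$ and Harnack's inequality applied to the nonnegative harmonic function $\sup_\Omega u-P$. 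The bookkeeping in Step~3 ($h=P+\int_{\tilde\Omega\setminus\Omega_1}\log|z-\zeta|\,d\mu-2\pi\int_{\tilde\Omega}H_\zeta\,d\mu$) is consistent and each term is correctly reduced to $C\,(\sup_\Omega u-\sup_{\Omega_1}u)$.

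Two small points deserve tightening. First, in Step~2 the existence of $z_0\in\Omega_1$ with $u(z_0)\ge\sup_{\Omega_1}u-\varepsilon$ is just the definition of the supremum; upper semicontinuity would only produce a maximizer on $\overline{\Omega_1}$, possibly outside $\Omega_1$, so the parenthetical justification is the wrong one (though the conclusion is fine). Second, the assertion that the integrand $H_\zeta(z)$ is ``uniformly bounded since $z$ stays in the interior'' is not immediate: as $\zeta\to\partial\tilde\Omega$ the boundary data $\frac{1}{2\pi}\log|\cdot-\zeta|$ of the harmonic function $H_\zeta$ develops a logarithmic dip, and the crude bound $H_\zeta(z)\ge\frac{1}{2\pi}\log|z-\zeta|$ coming from $G\le 0$ blows up near the diagonal. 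The uniform two-sided bound $\sup\{|H_\zeta(z)|:z\in\Omega_2,\ \zeta\in\tilde\Omega\}<\infty$ is nevertheless true for a smooth intermediate domain, because $H_\zeta$ is the Poisson integral of $\frac{1}{2\pi}\log|\cdot-\zeta|$ over $\partial\tilde\Omega$, the Poisson kernel is bounded at points of $\Omega_2$, and $\log|\cdot-\zeta|$ is integrable on $\partial\tilde\Omega$ uniformly in $\zeta$; one sentence to this effect closes the only real gap. With these repairs the proof is complete.
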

\begin{proof}[The proof of Theorem \ref{sbet}]
Notice that the ergodic measure for the shift on the Torus is the Lebesgue measure and $m(\mathbb{T})=1$. Then, $<u>=\int_{\mathbb{T}}u(x)dx$, and
\[\sum_{k=1}^n u(x+k\omega)-n<u>=\sum_{k=1}^n\int_{\Omega'} \log |\{x+k\omega\}-\zeta|d\mu(\zeta)-n\int_{\Omega'} I(\zeta)d\mu(\zeta)+\sum_{k=1}^nh(\{x+k\omega\})-n\int_0^1h(y)dy.\]
Recall that
\[\sum_{k=1}^n\int_{\Omega'} \log |\{x+k\omega\}-\zeta|d\mu(\zeta)=\int_{\Omega'} F_{n,\zeta}(x)d\mu(\zeta).\]
Then
\begin{eqnarray}
  \int_0^1\exp\left (\sigma |\sum_{k=1}^n u(x+k\omega)-n<u>|\right )dx & \leq &\left [\int_0^1\exp\left (\sigma \left |\int_{\Omega'} (F_{n,\zeta}(x)-n I(\zeta))d\mu(\zeta)\right |\right )dx\right ]^{\frac{1}{2}}\nonumber\\
   &&\ \ \times \left [\int_0^1\exp\left (\sigma \left |\sum_{k=1}^nh(\{x+k\omega\})-n\int_0^1h(y)dy\right |\right )dx\right ]^{\frac{1}{2}}\nonumber.
\end{eqnarray}
Since $\exp(\sigma \cdot)$ is a convex function, the Jensen's inequality implies that
\begin{eqnarray}
  \int_0^1\exp\left (\sigma \left |\int_{\Omega'} (F_{n,\zeta}(x)-n I(\zeta))d\mu(\zeta)\right |\right )dx &\leq & \int_0^1\int_{\Omega'}\exp\left (\sigma \mu(\Omega')\left | F_{n,\zeta}(x)-n I(\zeta)\right |\right )\frac{d\mu(\zeta)}{\mu(\Omega')}dx\nonumber\\
  &=&\int_{\Omega'}\int_0^1\exp\left (\sigma \mu(\Omega')\left | F_{n,\zeta}(x)-n I(\zeta)\right |\right )dx\frac{d\mu(\zeta)}{\mu(\Omega')}\nonumber\\
  &\leq & \int \exp\left(\tilde C\sigma \mu(\Omega')n\breve\delta_0^n\right)\frac{d\mu(\zeta)}{\mu(\Omega')}\nonumber\\
  &\leq &\exp\left(\tilde C\sigma \mu(\Omega')n\breve\delta_0^n\right).\nonumber
 \end{eqnarray}
Thus, combining it with (\ref{12014}), we have for any $0<\sigma\leq \frac{\tilde c}{\mu(\Omega')}$,
\[\int_0^1\exp\left (\sigma |\sum_{k=1}^n u(x+k\omega)-n\langle u\rangle|\right )dx< \exp\left(\tilde C\sigma \mu(\Omega')n\breve\delta_0^n\right).\]
Recall the Markov's inequality: For any  measurable extended real-valued function $f(x)$ and $\epsilon >0$,we have
\[\mes \left (\{x\in \mathbb{X}:|f(x)|\ge \epsilon\} \right ) \leq \frac{1}{\epsilon}\int_{\mathbb{X}} |f|dx.\]
Let $f(x)=\exp\left (\sigma |\sum_{k=1}^n u(x+k\omega)-n\langle u\rangle|\right )$ and $\epsilon=\exp(\sigma \breve\delta n)$, then
\begin{eqnarray}
 &&\mes\left (\left \{x\in \mathbb{X}:|\sum_{k=1}^n u(x+k\omega)-n\langle u\rangle|>\delta n\right \}\right )\nonumber\\
 &=& \mes \left (\left \{x\in \mathbb{X}:\exp\left (\sigma |\sum_{k=1}^n u(x+k\omega)-n\langle u\rangle|\right ) \ge \exp(\sigma \breve\delta n)\right \} \right )\nonumber\\
 &\leq & \exp\left(-\sigma \delta n+\tilde C\sigma \mu(\Omega')n\breve\delta_0^n\right).\nonumber
\end{eqnarray}
We finish this proof by setting $\breve C=2\tilde C \mu(\Omega')$.
\end{proof}

\section{Large Deviation Theorems for $f_n^a(x,E,\omega)$}

To  apply  Theorem \ref{sbet}, we first need to define some subharmonic functions. Let
\begin{equation}\label{Mnax}
  M_n^a(x,E,\omega):=\left(\prod_{j=1}^na(x+j\omega)\right)M_n(x,E,\omega)=\prod_{j=1}^n\left ( \begin{array}{cc}
  v(x+j\omega)-E & \overline{a(x+j\omega)} \\
 a(x+(j+1)\omega)& 0 \\
  \end{array}\right ).
\end{equation}
Note that a real function $f(x)$ on $\mathbb{T}$ has its complex analytic extension $f(z)$ on the complex strip $\mathbb{T}_{\rho}=\{z:|\Im z|<\rho\}$ and the complex analytic extension of $\bar a(x)$ should be defined on  $\mathbb{T}_{\rho}$ by
\[\tilde a(z):=\overline{a(\frac{1}{z})}.\]
Then, the extension of $M^a_n(x,E,\omega)$ is
\begin{equation}\label{Mnaz}
  M_n^a(z,E,\omega)=\prod_{j=1}^n\left ( \begin{array}{cc}
  v(z+j\omega)-E & \tilde a(z+j\omega) \\
 a(z+(j+1)\omega)& 0 \\
  \end{array}\right ),
\end{equation}where $z+\omega$ means $z\exp\left(2\pi i\omega\right)$ here. Moreover, simple computations yield that
\begin{equation}\label{Mnazdet}
 M_n^a(z,E,\omega)=\left ( \begin{array}{cc}
  f_n^a(z,E,\omega) & \tilde a(z)f^a_{n-1}(z+\omega,E,\omega) \\
 a(z+n\omega)f^a_{n-1}(z,E,\omega)& -\tilde a(z)a(z+n\omega)f^a_{n-2}(z+\omega,E,\omega) \\
  \end{array}\right ),
\end{equation}
where
\[\begin{aligned}
    f^a_n(z, E,\omega)  &= \det\bigl(H_n(z, \omega)-E\bigr)\\
    & =
    \begin{vmatrix}
     v\bigl(z+\omega\bigr) - E & -a\bigl(z+2\omega\bigr) & 0  &\cdots &\cdots & 0\\[5pt]
    -\tilde a\bigl(z+2\omega\bigr) & v\bigl(z+2\omega\bigr) - E & -a\bigl(z+3\omega\bigr) & 0 & \cdots & 0\\[5pt]
    \vdots & \vdots & \vdots & \vdots & \vdots & \vdots\\
    &&&&&-a\bigl(z+n\omega\bigr) \\[5pt]
    0 & \dotfill & 0 &0 &-\tilde a\bigl(z+n\omega\bigr) & v\bigl(z+n\omega\bigr) - E
    \end{vmatrix}
    \end{aligned}.\]
Note that if $\Im z=0$, then $H_n(z, \omega)=H_n(x,\omega)$ is Hermitian. Now with fixed $E$ and $\omega$, the function $\frac{1}{N}\log\|M_N^a(z,E,\omega)\|$ is subharmonic. In this paper, we only need to consider $E\in \mathscr{E}$, where
\[
\mathscr{E}:=[-2\|a(x)\|_{L^{\infty}(\mathbb{T})}-\|v(x)\|_{L^{\infty}(\mathbb{T})},\ 2\|a(x)\|_{L^{\infty}(\mathbb{T})}+\|v(x)\|_{L^{\infty}(\mathbb{T})}],\]
as the spectrum $\cS_{\omega}\subset \mathscr{E}$. Thus, for any irrational $\omega$ and $1\leq n\in\mathbb{N}$,
\begin{equation}\label{supmna}
  \sup\limits_{E\in\mathscr{E},x\in\mathbb{T}}\frac{1}{N}\log\|M_N^a(z,E,\omega)\|\leq M_0,
\end{equation}
where
\[
M_0:=\log\left(3\|a\|_{L^{\infty}(\mathbb{T}_{\rho})}+2 \|v\|_{L^{\infty}(\mathbb{T}_{\rho})}\right).
\]

We also need to define the unimodular matrix
\begin{equation}
  \label{mnu}M^u_n(x,E,\omega):=\frac{M_n(x,E,\omega)}{|\det
M_n(x,E,\omega)|^{\frac{1}{2}}},
\end{equation} which makes sense a.e. $x\in \mathbb{T}$ and has the
 relationship
\[\int_{\mathbb{T}}\frac{1}{n}\log\|M_n^u(x,E,\omega)\|dx=L_n(E,\omega).\]
Then, we have the LDTs for the matrices as follows:
\begin{lemma}\label{ldtformatrices}Let $\omega$ be the Brjuno-R\"ussmann number satisfying Hypothesis H.1, or H.2, or H.3 and $L(E,\omega)>0$.
There exist $\hat c=\hat c(v,a,\omega)$ and $\check c=\check c(v,a,\omega)$ such that for any $n\ge 0$ and $\delta>\delta_0^n$,
\begin{equation}
  \mes\left\{x:\left|u(x,E,\omega)-\langle u\rangle \right|> \delta\right\}<\exp\left(- \hat  c\delta n\right)+\exp(-\check c\delta^2n),\label{sldteforuna}
\end{equation}
where $u(x,E,\omega)$ can be $\frac{1}{n}\log\|M_n^a(x,E,\omega)\|$, $\frac{1}{n}\log\|M_n(x,E,\omega)\|$ and $\frac{1}{n}\log\|M_n^u(x,E,\omega)\|$. What's more, there exists $\bar c=\bar c(a,v,\omega)$ such that if $\delta=\kappa L(E,\omega)$ with $\kappa<\frac{1}{10}$, then the exception measure in (\ref{sldteforuna}) will be less than $\exp\left(-\bar c\kappa^2L(\omega,E)n\right)$.
\end{lemma}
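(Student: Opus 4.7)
My strategy is to reduce the lemma to Theorem \ref{sbet} combined with submultiplicativity of the Jacobi cocycle and the avalanche principle, following the framework of \cite{T18} but with our new smallest deviation $\delta_0^n$ in place of the Liouville parameter $c\beta$ used there. First I would note that $u_n(x):=\frac{1}{n}\log\|M_n^a(x,E,\omega)\|$ is subharmonic on a complex neighbourhood of $\mathbb{T}$ and uniformly bounded above there: the analytic extension (\ref{Mnaz})--(\ref{Mnazdet}) of $M_n^a$ together with the bound (\ref{supmna}) deliver both properties. Theorem \ref{sbet} then applies and yields, for every positive integer $k$ and every $\delta>\delta_0^k$,
\[
\mes\left\{x\in\mathbb{T}:\left|\frac{1}{k}\sum_{j=1}^k u_n(x+j\omega)-L_n(E,\omega)\right|>\delta\right\}\le\exp(-c\delta k).
\]

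Next, to upgrade this deviation bound for the ergodic average of $u_n$ into a pointwise deviation bound for $u_n(x)$ at scale $n$, I would combine the submultiplicativity $\|M_{n+m}^a(x)\|\le\|M_m^a(x+n\omega)\|\cdot\|M_n^a(x)\|$ with the avalanche principle, exactly as in \cite{T18}. The upper tail $u_n(x)>L(E,\omega)+\delta$ follows essentially immediately from submultiplicativity and the SBET bound above. The harder lower tail $u_n(x)<L(E,\omega)-\delta$ is where the hypothesis $L(E,\omega)>0$ becomes essential: if $\log\|M_n^a(x)\|$ were much smaller than $nL$ on a set of appreciable measure, then concatenating such blocks via the avalanche principle would contradict the positivity of the Lyapunov exponent. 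Carrying out this argument carefully produces the mixed bound $\exp(-\hat c\delta n)+\exp(-\check c\delta^2 n)$, and optimising in the regime $\delta=\kappa L$ with $\kappa<\frac{1}{10}$ yields the sharpened tail $\exp(-\bar c\kappa^2 Ln)$.

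To pass from $M_n^a$ to the un-normalised transfer matrix $M_n$ and to the unimodularised $M_n^u$ in (\ref{mnu}), I would use (\ref{Mnax}) to write
\[
\frac{1}{n}\log\|M_n(x)\|=\frac{1}{n}\log\|M_n^a(x)\|-\frac{1}{n}\sum_{j=1}^n\log|a(x+j\omega)|,
\]
and subtract $\frac{1}{2n}\log|\det M_n(x,E,\omega)|$ analogously for $M_n^u$. The additive correction $\frac{1}{n}\sum_{j=1}^n\log|a(x+j\omega)|$ is a Birkhoff sum of the subharmonic and bounded function $\log|a|$ on $\mathbb{T}_\rho$, and the determinantal correction reduces to a comparable expression in $\log|a|$. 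Hence Theorem \ref{sbet} controls these corrections directly, and summing the deviations yields the lemma for all three choices of $u$.

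The main obstacle is the lower-tail estimate at scale $n$: SBET together with submultiplicativity only deliver the upper tail, and ruling out the event $u_n\ll L$ genuinely requires the avalanche principle combined with $L>0$. Tracking the avalanche iteration so as to produce simultaneously the $\exp(-\hat c\delta n)$, $\exp(-\check c\delta^2 n)$, and $\exp(-\bar c\kappa^2 Ln)$ tails requires careful bookkeeping of constants, but is essentially the argument of \cite{T18}, with $\delta_0^n$ substituting everywhere for $c\beta$.
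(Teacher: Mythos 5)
Your first and last steps match the paper's route: the paper proves this lemma by citing \cite{T18} (the proofs there carry over verbatim since $\beta(\omega)=0$ for a Brjuno-R\"ussmann number), and its sketch after Lemma \ref{Ln_L} does exactly what you propose at the ends — apply Theorem \ref{sbet} to the Birkhoff averages of the subharmonic function $u_n(x)=\frac1n\log\|M_n^a(x,E,\omega)\|$, and reduce $M_n$ and $M_n^u$ to $M_n^a$ through Birkhoff sums of $\log|a|$ and $\log|\bar a|$, again controlled by Theorem \ref{sbet}. The gap is in your middle step. The passage from the average $\frac1k\sum_{j=1}^k u_n(x+j\omega)$ to the pointwise value $u_n(x)$ is \emph{not} done by submultiplicativity for the upper tail plus an avalanche-principle argument for the lower tail; it is done by the two-sided almost-invariance inequality displayed in the paper: writing $M_{n+j}^a(x)=M_n^a(x+j\omega)M_j^a(x)=M_j^a(x+n\omega)M_n^a(x)$ and using $\|(M_j^a)^{-1}\|=\|M_j^a\|/|\det M_j^a|$, one gets
\[
\Bigl|\,u_n(x)-\tfrac1k\sum_{j=1}^k u_n(x+j\omega)\Bigr|\le \frac{2M_0k}{n}+\frac{1}{n}\bigl|\text{weighted Birkhoff sums of } d\bigr|,\qquad d(x)=\log|a(x+\omega)\bar a(x)|,
\]
which controls \emph{both} tails symmetrically; choosing $k\sim\delta n$ and applying Theorem \ref{sbet} to $u_n$ at scale $k$ and to $d$ produces precisely the two terms $\exp(-\hat c\delta n)$ and $\exp(-\check c\delta^2 n)$, and the $\delta=\kappa L$ statement is just this bound specialized. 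Your assertion that ``SBET together with submultiplicativity only deliver the upper tail'' is therefore false for this setup, and positivity of $L$ is not what makes the lower tail work.

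Beyond being unnecessary, your avalanche-principle route for the lower tail is not justified as written. To apply Proposition \ref{prop:AP} you need, off a small set, lower bounds $\|M_l^u(\cdot)\|\ge H>m$ and the gluing condition (\ref{diff}) at a smaller scale $l$ — i.e., you need an LDT (both tails) at scale $l$ already, so the argument is circular unless you set up a multi-scale induction, which you do not describe and which \cite{T18} does not use for this lemma (AP enters only later, e.g. for the H\"older continuity and for the determinant LDTs in Section 3). Moreover, an AP-based comparison naturally measures deviations from $L_l$ or $L$, not from $\langle u_n\rangle=L_n^a$, and the error $L_l^a-L_n^a$ is only controlled by Lemma \ref{Ln_L}, which itself presupposes the present lemma; it is also unclear how such an argument would reach every $\delta>\delta_0^n$ with the stated exceptional measure. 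A smaller slip: your SBET display should be centered at $L_n^a(E,\omega)=\langle u_n\rangle$ rather than $L_n(E,\omega)$, and even your ``upper tail by submultiplicativity'' needs the $\log|a|$ determinant corrections, since $M_1^a$ is not unimodular.
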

\begin{proof}When $u=\frac{1}{n}\log\|M_n^a(x,E,\omega)\|$,
The  LDT (\ref{sldteforuna}) is about the analytic matrix; when $u=\frac{1}{n}\log\|M_n(x,E,\omega)\|$, The  LDT (\ref{sldteforuna}) is about the Jacobi cocycles; when $u=\frac{1}{n}\log\|M^u_n(x,E,\omega)\|$, The  LDT (\ref{sldteforuna})  is about the unimodular matrix to satisfy the hypothesises of Lemma \ref{expanding_contracting_directions}, Lemma \ref{wedge_triangle_ineq} and the Avalanche Principle(Proposition \ref{prop:AP}). In \cite{T18}, our second author obtained these  LDTs with finite Liouville frequency, which means that $\beta(\omega)<\infty$. Due to the  fact that  $\beta(\omega)=0$ for any Brjuno-R\"ussmann number, the proofs in that paper are also available here.
\end{proof}
 What's more,  the following lemma shows that $L_n(E,\omega)$ and $L_n^a(E,\omega)=\langle \frac{1}{n}\log\|M_n^a(x,E,\omega)\| \rangle $ in the above LDTs can be exchanged by $L(E,\omega)$ and $L^a(E,\omega)$, respectively. Here,
\begin{equation}
  \label{la}
  L^a(E,\omega)=\lim_{n\to\infty}L_n^a(E,\omega)=L(E,\omega)+D,
\end{equation}
and
\begin{equation}
  \label{setting-d}
  D:=\int_{\mathbb{T}}\log|a(x)|dx=\int_{\mathbb{T}}\log|\bar a(x)|dx.
\end{equation}
\begin{lemma}\label{Ln_L} Let $L(E,\omega)>0$. For any integer $n>1$, we have
\[
0\le L_{n}-L=L_{n}^{u}-L^{u}=L_{n}^{a}-L^{a}<C_{0}\frac{\left(\log n\right)^{2}}{n}
\]
where $C_{0}=C_{0}\left(a,v,\omega,E\right)$.\end{lemma}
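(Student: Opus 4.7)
The plan is to reduce the three differences $L_n-L$, $L_n^u-L^u$, $L_n^a-L^a$ to a single quantity and then bound it by an Avalanche Principle argument driven by Lemma \ref{ldtformatrices}.

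First I would dispose of the equalities by direct bookkeeping. From (\ref{eq:1.3}) one computes $\det M(x,E,\omega)=\bar a(x)/a(x+\omega)$, so the product telescopes to
\[
|\det M_n(x,E,\omega)| \;=\; \frac{|a(x+\omega)|}{|a(x+(n+1)\omega)|},
\]
and its logarithm integrates to zero over $\TT$ by translation invariance. Thus $L_n^u=L_n$ for every $n$ and $L^u=L$. Similarly, (\ref{Mnax}) gives $\log\|M_n^a(x)\|=\sum_{j=1}^n\log|a(x+j\omega)|+\log\|M_n(x)\|$, which integrates to $L_n^a=L_n+D$ and passes to $L^a=L+D$ with $D=\int_\TT\log|a|$ as in (\ref{setting-d}). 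Subtracting proves the three equalities. Non-negativity of $L_n-L$ is automatic from $L=\inf_n L_n$ in (\ref{leomega}).

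For the main bound I would work with the unimodular cocycle $M_n^u$, which is what the Avalanche Principle requires. Pick a scale $N\sim C_1\log n$ with $C_1=C_1(a,v,\omega,E)$ large enough that $L_N^u\ge \tfrac34 L^u$ (using Lemma \ref{Ln_L}\,-\,type subadditivity control, or just $L^u>0$). By the final clause of Lemma \ref{ldtformatrices} applied at scales $N$ and $2N$ with $\kappa L^u/2$, there is a set $\cB\subset\TT$ of measure $\mes(\cB)\le\exp(-\bar c N)$ off which $\|M_N^u(x+jN\omega)\|\ge\exp(NL^u/2)$ and $\|M_{2N}^u(x+jN\omega)\|\ge\exp(NL^u/2)\cdot\|M_N^u(x+jN\omega)\|$ for every $0\le j<m=\lfloor n/N\rfloor$. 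The AP then applies to $M_{mN}^u(x)=M_N^u(x+(m-1)N\omega)\cdots M_N^u(x)$, giving pointwise on $\TT\setminus\cB$
\[
\log\|M_{mN}^u(x)\|=\sum_{j=1}^{m-1}\log\|M_{2N}^u(x+(j-1)N\omega)\|-\sum_{j=2}^{m-1}\log\|M_N^u(x+(j-1)N\omega)\|+O\!\left(m e^{-cN}\right).
\]
Integrating over $\TT$, using translation invariance so each shifted factor averages to $k L_k^u$, and absorbing the remaining $r=n-mN<N$ entries via the deterministic bound (\ref{supmna}), I would obtain the telescoping comparison
\[
\bigl|\,L_{n}^{u}-\bigl(2 L_{2N}^{u}-L_{N}^{u}\bigr)\bigr|\;\le\;\frac{C(\log N)}{N}.
\]
Applying the same identity at the doubled scale $(N,2N)\rightsquigarrow(2N,4N)\rightsquigarrow\cdots$ and telescoping across $O(\log n)$ dyadic levels produces $|L_n^u-L^u|\le C(\log n)^2/n$, which by the equalities of Step 1 is exactly the claimed bound.

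The main obstacle is the book-keeping of error terms when integrating the AP identity over $\TT$: one must show that the contribution from the exceptional set $\cB$ where the AP hypotheses fail is dominated by the good-set estimate. The deterministic ceiling $\log\|M_n^u(x)\|\le nM_0$ from (\ref{supmna}), combined with $\mes(\cB)\le\exp(-\bar c N)$, forces the bad contribution to be $\le nM_0\exp(-\bar cN)$, which is absorbed once $C_1$ in $N\sim C_1\log n$ is taken large enough in terms of $M_0$ and $\bar c$. A secondary nuisance is that the AP angular hypothesis $\|M_{2N}^u\|\ge \mu^{-1}\|M_N^u\|^2$ is a \emph{joint} statement on two consecutive blocks and must be controlled via the LDT at the doubled scale $2N$; this is precisely the reason Lemma \ref{ldtformatrices} is stated for $M_n^u$ in addition to $M_n$, and the $\kappa L(E,\omega)$ refinement in its last sentence provides the quantitative input needed to close the argument.
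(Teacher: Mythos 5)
Your reduction of the three equalities is correct and your overall strategy (Avalanche Principle driven by Lemma \ref{ldtformatrices}, inner scale $N\sim C_1\log n$, then a multiscale telescoping) is exactly the route the paper takes implicitly: the paper proves this lemma only by citing Lemma 3.9 of \cite{BV13}, whose proof is precisely this AP--LDT rate-of-convergence argument, using that $\delta_0^n\ll L$. The identities $|\det M_n(x)|=|a(x+\omega)|/|a(x+(n+1)\omega)|$, $L_n^u=L_n$ (already noted after (\ref{mnu})), $L_n^a=L_n+D$ from (\ref{Mnax}), and $0\le L_n-L$ from subadditivity and (\ref{leomega}) are all fine.

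Two steps need repair, however. First, the ``deterministic ceiling'' $\log\|M_n^u(x)\|\le nM_0$ is not available: (\ref{supmna}) bounds $M_n^a$, and passing to the unimodular matrix removes the factor $\prod_j a(x+j\omega)$ and divides by $|\det M_n|^{1/2}$, which introduces $-\log|a|$ terms that blow up at the zeros of $a$; so $\log\|M_n^u\|$ is in general unbounded above on $\TT$. The exceptional-set and remainder-block contributions must instead be absorbed via an $L^1$ or $L^2$ bound, $\bigl\|\log\|M_n^u\|\bigr\|_{L^2(\TT)}\le Cn$, combined with Cauchy--Schwarz against $\mes(\cB)^{1/2}\le\exp(-cN/2)$ --- the same device the paper uses for $\nu_n^u$ in Lemma \ref{avg_entries->lower_bound}. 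Second, the quantitative bookkeeping at the end is off. Integrating the AP identity with inner scale $N\sim C_1\log n$ gives $|L_n^u-(2L_{2N}^u-L_N^u)|\le C\bigl(N/n+n e^{-cN}\bigr)\sim C(\log n)/n$, not $C(\log N)/N$ (the latter is of order $\log\log n/\log n$ and useless). More importantly, telescoping only the inner pair dyadically, $(N,2N)\rightsquigarrow(2N,4N)\rightsquigarrow\cdots$, at the fixed outer scale $n$ does not reach $L$: writing $d_s=L_{2^sN}-L_n$, the relations give $d_{s+1}=\tfrac12(d_s+O(2^sN/n))$, so the best one extracts this way is $|L_{N_S}-L_n|\lesssim N/N_S+N_S/n$, i.e.\ a power of $n^{-1/2}$, far short of $(\log n)^2/n$. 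The standard fix (and what the cited proof does) is to also advance the outer scale: the comparison holds uniformly for all outer scales $n'\in[n,e^{cN}]$ with the same inner pair, which yields $|L_n-L_{n^2}|\le C(\log n)/n$, and iterating $n\mapsto n^2$ and summing $\sum_k(\log n_k)/n_k\lesssim(\log n)/n\le C_0(\log n)^2/n$ closes the argument. With these two corrections your proposal reproduces the proof the paper appeals to.
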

\begin{proof}
 It is the same as  Lemma 3.9 in \cite{BV13}, which was for the strong Diophantine $\omega$. They applied the same LDTs, whose $\delta_0^n=\frac{(\log n)^A}{n}$ with that frequency, to obtain its proof. It is  available here, since it only need the fact, which our LDTs also satisfy,  that $\delta_0^n$ is much less than the positive Lyapunov exponent.
\end{proof}

Although the details of the proof of Lemma \ref{ldtformatrices} can be found in \cite{T18}, we still give a brief introduction here, to make the readers understand the methods we apply in this section to obtain Theorem \ref{ldtforfan}-\ref{ldth2}. Easy computations show that the differences between $\frac{1}{N}\log\|M_N^a(x,E,\omega)\|$ and $\frac{1}{N}\log\|M_N(x,E,\omega)\|$ and between $\frac{1}{N}\log\|M_N^a(x,E,\omega)\|$ and $\frac{1}{N}\log\|M_n^u(x,E,\omega)\|$ are constructed by the combination of $\frac{1}{N}\sum_{j=1}^N\log|a(x+j\omega)|$ and $\frac{1}{N}\sum_{j=1}^N\log|\bar a(x+j\omega)|$, whose complex extensions can be estimated by our Theorem \ref{sbet} easily. Therefore, we only need to prove the LDT for $\frac{1}{N}\log\|M_n^a(x,E,\omega)\|$, which also has a subharmonic extension. Due to this subharmonicity,
\begin{equation}
  \mes\left\{x:\left|\frac{1}{n}\sum_{j=1}^n\log\|M_n^a(x+j\omega,E,\omega)\|-L^a_n(E,\omega)\right|> \delta\right\}<\exp\left(-   c\delta n\right).\label{sbetforuna}
\end{equation}
On the other hand, for any $k\in\mathbb{Z}$,
\begin{eqnarray*}
  -\frac{2M_0k}{n}+\sum_{j=0}^{k-1}\frac{k-j}{nk}d(x+j\omega)&\leq &\frac{1}{n}\log\|M_n^a(x,E,\omega)\|-\frac{1}{kn}\sum_{j=1}^k\log\|M_n^a(x+j\omega,E,\omega)\|\\
&\leq &\frac{2M_0k}{n}-\sum_{j=0}^{k-1}\frac{k-j}{nk}d(x+(n+j-1)\omega),
\end{eqnarray*}
where $d(x)=\log|a(x+\omega)\bar a(x)|$.
Obviously, it also can be solved by our Theorem \ref{sbet}. Now, we can explain why we apply the BMO norm and the John-Nirenberg inequality, not the method for (\ref{sldteforuna}), to obtain the LDTs for $f_n^a(x,E,\omega)$. The reason is that Theorem \ref{sbet} holds for $f_n^a(x,E,\omega)$, but we can not handle the difference between $\frac{1}{n}\log|f_n^a(x,E,\omega)|$ and $\frac{1}{kn}\sum_{j=1}^k\log|f_n^a(x+j\omega,E,\omega)|$.

We will apply  the analyticity of $f_n^a(x,E,\omega)$ and the  subharmonicity of $\frac{1}{n}\log|f_n^a(x,E,\omega)|$ via the following lemmas in this paper.
\begin{defi}
  Let $H>1$. For any arbitrary subset $\cB\subset \cD(z_0,1)\subset \mathbb{C}$ we say $\cB\in Car_1(H,K)$ if $\cB\subset \bigcup_{j=1}^{j_0}\cD(z_j,r_j)$ with $j_0<K$, and
  \begin{equation}
    \label{cardef}
    \sum_{j}r_j<e^{-H}.
  \end{equation}Here $D(z,r)$ means the complex platform center at $z$ with radius $r$. If $d$ is a positive integer greater than one and $\cB\subset \prod_{i=1}^d\subset \IC^d$ then we define inductively that $\cB\in Car_d(H,K)$ for any $z\in \IC\backslash \cB_j$, here $\cB_z^{(j)}=\{(z_1,\cdots,z_d)\in \cB:z_j=z\}.$
\end{defi}
\begin{lemma}[Cartan estimate, Lemma 2.4 in \cite{GS11}]\label{carest}
Let $\phi(z_1,\cdots,z_d)$ be an analytic function defined in a polydisk $\cP=\prod_{j=1}^d\cD(z_{j,0},1),\ z_{j,0}\in \IC$. Let $M\ge \sup_{\underline z\in \cP}\log|\phi(\underline z)|,\ m\le \log|\phi(\underline z_0)|,\ \underline z_0=(z_{1,0},\cdots,z_{d,0})$. Given $H\gg 1$ there exists a set $\cB\subset \cP, \cB\in Car_d(H^{\frac{1}{d}},K),\ K=C_dH(M-m)$, such that
\[
  \log|\phi(\underline z)|>M-C_dH(M-m)
\]
for any $\underline z\in\prod_{j=1}^d\cD(z_{j,0},\frac{1}{6})\backslash \cB$.
\end{lemma}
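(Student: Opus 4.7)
The plan is to establish this Cartan-type estimate by induction on the dimension $d$, following the classical argument. The base case $d=1$ combines Jensen's formula with the one-variable Cartan lemma on polynomials, while the inductive step is a fiberwise slicing argument that peels off one coordinate at a time.

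For $d=1$: apply Jensen's formula to $\phi$ on, say, the disk $\cD(z_{1,0},1/2)$. Using $\sup_{\cD(z_{1,0},1)}\log|\phi|\le M$ and $\log|\phi(z_{1,0})|\ge m$, Jensen gives that the number $N$ of zeros of $\phi$ in $\cD(z_{1,0},1/2)$ is at most $C(M-m)$. Factor $\phi(z)=P(z)g(z)$, where $P(z)=\prod_{i=1}^N(z-\alpha_i)$ collects these zeros and $g$ is analytic and non-vanishing on $\cD(z_{1,0},1/2)$. On $\cD(z_{1,0},1/6)$, the Harnack / Borel--Carath\'eodory inequality applied to the harmonic function $\log|g|$ (comparing its values against its supremum, which is controlled by $M$) yields $\log|g(z)|\ge M-C(M-m)$. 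Then apply the classical one-variable Cartan lemma to the polynomial $P$: for $H\gg 1$, the sublevel set $\cB=\{z:|P(z)|<\eta^N\}$ with $\eta=e^{-H}/(2e)$ is contained in a union of at most $N$ disks of total radius $\le e^{-H}$, so off $\cB$ one has $\log|P(z)|\ge -NH\ge -CH(M-m)$, and hence $\log|\phi(z)|\ge M-CH(M-m)$. This establishes the base case with $\cB\in Car_1(H,K)$ and $K=CH(M-m)$.

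For the inductive step $d-1\to d$: write $\underline z=(z_1,\underline w)$ with $\underline w\in\IC^{d-1}$, and set $\underline w_0=(z_{2,0},\ldots,z_{d,0})$. Apply the $d=1$ result to the one-variable slice $\phi_{\underline w_0}(z_1):=\phi(z_1,\underline w_0)$ with parameter $H^{1/d}$, producing an exceptional set $\cB^{(1)}\subset\cD(z_{1,0},1)$ contained in a union of disks of total radius $\le e^{-H^{1/d}}$; outside $\cB^{(1)}$ one has $\log|\phi(z_1,\underline w_0)|\ge M-CH^{1/d}(M-m)$, so the inductive hypothesis applies to the $(d-1)$-variable function $\underline w\mapsto\phi(z_1,\underline w)$, producing a fiberwise exceptional set of class $Car_{d-1}(H^{1/d},K)$. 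Assembling these fiber data over $z_1$ according to the inductive definition of the Cartan class yields the total exceptional set $\cB\in Car_d(H^{1/d},K)$ off which $\log|\phi(\underline z)|\ge M-C_dH(M-m)$.

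The main technical obstacle is the bookkeeping at the inductive step: the exceptional set must have the correct Cartan type with parameter $H^{1/d}$ (not $H$), and the renormalized drop $M-m$ after each peeling of a coordinate must not accumulate into a larger loss than the claimed $C_dH(M-m)$. The choice of the root $H^{1/d}$ is precisely what balances these two losses across $d$ layers. Since the statement is Lemma 2.4 of \cite{GS11}, I would rely on the detailed bookkeeping given there rather than reproduce the combinatorics of the Cartan class in this paper.
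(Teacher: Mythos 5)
The paper does not actually prove this lemma: it is quoted verbatim from \cite{GS11} (Lemma 2.4 there) and used as a black box, so there is no in-paper argument to compare yours against. Your sketch is the standard proof of the multivariable Cartan estimate and is essentially the one behind the cited result: Jensen's formula plus the one-variable Cartan lemma for polynomials in the base case, and a fiberwise induction matched to the recursive definition of $Car_d$ in the step. The bookkeeping you worry about does close up: peeling off $z_1$ with parameter $H^{1/d}$ degrades the centre value to $m'$ with $M-m'\le CH^{1/d}(M-m)$, and applying the $(d-1)$-dimensional statement with parameter $H^{(d-1)/d}$ then yields both the class $Car_{d-1}\bigl((H^{(d-1)/d})^{1/(d-1)},K\bigr)=Car_{d-1}(H^{1/d},K)$ for the fibers and the total loss $C_{d-1}H^{(d-1)/d}\cdot CH^{1/d}(M-m)=C_dH(M-m)$, which is exactly why the root $H^{1/d}$ appears. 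The one place where your write-up is too quick is the upper bound on $\log|g|$ in the base case: since $\log|g|=\log|\phi|-\log|P|$ and the zeros $\alpha_i$ may sit arbitrarily close to the circle you integrate over, the supremum of $\log|g|$ is not ``controlled by $M$'' on $\cD(z_{1,0},1/2)$ itself; one must instead bound it on a circle well separated from the zero-collecting disk (say $|z-z_{1,0}|=3/4$, where $|z-\alpha_i|\ge 1/4$), obtaining $\sup\log|g|\le M+C(M-m)$, and then apply Harnack to the nonnegative harmonic function $M+C(M-m)-\log|g|$ on $\cD(z_{1,0},1/2)$. With that standard correction the rest of your argument goes through.
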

\begin{lemma}[Lemma 2.4 in  \cite{GS08}]
\label{sh_better_upper_bound} Let
$u$ be a subharmonic function defined on $\cA_{\rho}$ such that
$\sup_{\cA_{\rho}}u\le M$. There exist constants $C_{1}=C_{1}\left(\rho\right)$
and $C_{2}$ such that, if for some $0<\delta<1$ and some $L$ we
have
\[
\mes\left\{ x\in\TT:\, u\left(x\right)<-L\right\} >\delta,
\]
then
\[
\sup_{\TT}u\le C_{1}M-\frac{L}{C_{1}\log\left(C_{2}/\delta\right)}.
\]
\end{lemma}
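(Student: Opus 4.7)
The plan is to use the Riesz representation (Lemma \ref{lem:riesz}) to decompose $u = p + h$, and then to bound the distribution function of the logarithmic potential $p$. The guiding intuition is that if $\sup_{\Omega_{1}} u$ is close to $M$ on a compactly contained subregion, then the Riesz mass of $u$ must be small, which in turn prevents $u$ from dropping below $-L$ on a set of positive measure $\delta$.

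First I would pick nested subregions $\TT \subset \Omega_{2} \Subset \Omega_{1} \Subset \cA_{\rho}$ (all depending only on $\rho$) and apply Lemma \ref{lem:riesz} on $\Omega_{1}$ to write $u(z) = p(z) + h(z)$, where $p(z) = \int_{\Omega_{1}} \log|z-\zeta|\,d\mu(\zeta)$ and $h$ is harmonic on $\Omega_{1}$. Set $M^{**} := \sup_{\Omega_{1}} u$. The bounds \eqref{21002} and \eqref{21003} then yield $\mu(\Omega_{1}) \le C(M - M^{**})$ and $\|h - M^{**}\|_{L^{\infty}(\Omega_{2})} \le C(M - M^{**})$. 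Since $\TT \subset \Omega_{2}$, one has $\sup_{\TT} u \le M^{**}$, so it suffices to produce a gap $M - M^{**} \gtrsim L/\log(C_{2}/\delta)$.

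The heart of the argument is a Chebyshev-type tail bound for the log-potential. Setting $K := \mu(\Omega_{1})$ and applying Jensen's inequality with the probability measure $d\mu/K$, I would obtain, for any $0 < \sigma K < 1$,
\[
\int_{\TT} e^{-\sigma p(x)}\,dx \le \frac{1}{K}\int d\mu(\zeta)\int_{\TT}|x-\zeta|^{-\sigma K}\,dx \le \frac{C(\rho)}{1 - \sigma K}.
\]
Choosing $\sigma = 1/(2K)$ and invoking Markov's inequality gives $\mes\{x \in \TT : p(x) < A\} \le C\exp\!\bigl(A/(2K)\bigr)$. Now on the exceptional set $\{u < -L\}$, the lower bound $h \ge M^{**} - C(M - M^{**})$ forces $p(x) < -L - M^{**} + C(M - M^{**})$. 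Combining with the hypothesis $\mes\{u < -L\} > \delta$ and taking logarithms,
\[
\log(C/\delta) \ge \frac{L + M^{**} - C(M - M^{**})}{2C(M-M^{**})},
\]
which after an elementary rearrangement in $M^{**}$ delivers $\sup_{\TT} u \le M^{**} \le C_{1}M - L/(C_{1}\log(C_{2}/\delta))$.

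The step I expect to be most delicate is the final algebraic bookkeeping: one must compress the several geometric constants coming from the nested subregions $\Omega_{2} \Subset \Omega_{1}$, the factor $1/(2K)$, and the Jensen estimate into a single pair $(C_{1}(\rho), C_{2})$ that matches the stated form. A couple of edge cases also need a brief check --- if $M \le 0$ one first translates $u$ by a constant so that $M$ is positive, and if $K$ is so small that the Jensen bound $\int e^{-p/(2K)} \le 2C(\rho)$ is vacuous, then $M - M^{**}$ is already tiny and the conclusion follows directly from $\sup_{\TT} u \le M^{**}$ without any further work.
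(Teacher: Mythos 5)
This lemma is quoted verbatim from Goldstein--Schlag (Lemma 2.4 in \cite{GS08}); the paper gives no proof of its own, so there is nothing internal to compare against. Your argument is essentially the standard proof of that lemma: Riesz decomposition $u=p+h$ on $\Omega_1\Subset\cA_\rho$ with $\mu(\Omega_1)\le C(M-M^{**})$ and $\|h-M^{**}\|_{L^\infty(\Omega_2)}\le C(M-M^{**})$, followed by the Jensen/Chebyshev tail bound $\mes\{x\in\TT:\,p(x)<A\}\le C\exp\bigl(A/(2\mu(\Omega_1))\bigr)$ applied on the set $\{u<-L\}$, and the main line of the argument is sound.

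Two small points. First, your treatment of the ``small $K$'' case is stated backwards: the Riesz bound gives $K\le C(M-M^{**})$, so a small $K$ does \emph{not} force a small gap $M-M^{**}$. In fact no such case split is needed: for $A<0$ the tail bound only improves as $K\to0$, and the genuinely degenerate situations are $\mu=0$ (where $u=h$ on $\Omega_2$ and the hypothesis immediately gives $L<-M^{**}+C(M-M^{**})$, a stronger conclusion with no logarithm) and $A=-L-M^{**}+C(M-M^{**})\ge0$ (same inequality, same conclusion). Second, the statement as written is only meaningful for $M\ge0$ (e.g.\ $u\equiv-1$, $M=-1$ violates it for any $C_1\ge1$), and since the conclusion is not translation-covariant, ``translate $u$ so that $M>0$'' does not literally repair this; but that is an imprecision inherited from the quoted statement rather than a defect of your argument, which closes correctly once $M\ge0$ is assumed in the final bookkeeping.
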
~\\

Recalling the definitions of $M_n(x,E,\omega)$, $M_n^a(x,E,\omega)$, $M_n^u(x,E,\omega)$ and the expression (\ref{Mnazdet}), we have
\begin{equation}
\label{Mnzdet}
M_n(z,E,\omega)=\left\{\begin{array}
  {cc}f_n(z,E,\omega)&-\frac{\tilde a(z)}{a(z+\omega)}f_{n-1}(z+\omega,E,\omega)\\
  f_{n-1}(z,E,\omega)& -\frac{\tilde a(z)}{a(z+\omega)}f_{n-2}(z+\omega,E,\omega)
\end{array}\right\},
\end{equation}
and
\begin{equation}
  \label{Mnuzdet}
 M_n^u(z,E,\omega)=\left\{\begin{array}
  {cc}f_n^u(z,E,\omega)&-\frac{\tilde a(z)}{a(z+\omega)}\left|\frac{a(z+\omega)}{\tilde a(z)}\right|^{\frac{1}{2}}f^u_{n-1}(z+\omega,E,\omega)\\
  \left|\frac{a(z+n\omega)}{\tilde a(z+(n-1)\omega)}\right|^{\frac{1}{2}}f^u_{n-1}(z,E,\omega)& -\frac{\tilde a(z)}{a(z+\omega)}\left|\frac{a(z+n\omega)a(z+\omega)}{\tilde a(z)\tilde a(z+(n-1)\omega)}\right|^{\frac{1}{2}}f^u_{n-2}(z+\omega,E,\omega)
\end{array}\right\},
\end{equation}
where
\begin{equation}
  \label{MnzdetMnaz}
  f_n(z,E,\omega)=\frac{1}{\prod_{j=1}^na(z+j\omega)}f_n^a(z,E,\omega),
\end{equation}
and
\begin{equation}
  \label{MnuzdetMnaz}
  f^u_n(z,E,\omega)=\frac{1}{\left|\prod_{j=0}^{n-1}a(z+(j+1)\omega)\tilde a(z+j\omega)\right|^{\frac{1}{2}}}f_n^a(z,E,\omega)=\left(\prod_{j=0}^{n-1}\left|\frac{a(z+(j+1)\omega)}{\tilde a(z+j\omega)}\right|^{\frac{1}{2}}\right)f_n(z,E,\omega).
\end{equation}

Assume $L(E,\omega)=\gamma>0$. Then, we can obtain a particular deviation theorem as follow:
\begin{lemma}
\label{lem:considerable-difficulty-lemma}There exists $l_{0}=l_{0}\left(a,v,\gamma\right)$
such that
\[
\mes\left\{ x\in\TT:\,\left|f_{l}\left(x\right)\right|\le\exp\left(-l^{3}\right)\right\} \le\exp\left(-l\right)
\]
for all $l\ge l_{0}$.\end{lemma}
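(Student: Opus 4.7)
The plan is to pass from $f_l$ to $f_l^a$, on which subharmonicity is directly available, and then apply the sublevel set estimate of Lemma~\ref{sh_better_upper_bound}. The core identity is \eqref{MnzdetMnaz}, which gives $\log|f_l(x)| = \log|f_l^a(x)| - \sum_{j=1}^{l}\log|a(x+j\omega)|$.

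First, I would apply Theorem~\ref{sbet} to the subharmonic function $\log|a|$. Outside an exceptional set $\cB_1$ of measure at most $\exp(-cl)$ one has
\[
\Bigl|\sum_{j=1}^{l}\log|a(x+j\omega)| - lD\Bigr| \le l\cdot l\,\delta_{0}^{l} = o(l^{2})
\]
for $l\ge l_{0}$, where $D = \int_{\TT}\log|a|\,dx$ is as in~\eqref{setting-d}. Hence
\[
\{x:|f_l(x)|\le e^{-l^{3}}\}\subset\{x:|f_l^a(x)|\le e^{-l^{3}/2}\}\cup\cB_{1}.
\]

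Second, set $u(z):=\tfrac{1}{l}\log|f_l^a(z)|$; this is subharmonic on $\cA_\rho$ because $f_l^a$ is analytic there. The upper bound $\sup_{\cA_\rho}u\le M_{0}$ follows from~\eqref{supmna} since $f_l^a$ is the $(1,1)$-entry of $M_l^a$. With the choice $L:=l^{2}/2$, the event $\{u<-L\}$ is exactly $\{|f_l^a|\le e^{-l^{3}/2}\}$. Lemma~\ref{sh_better_upper_bound} then yields the implication
\[
\mes\{u<-L\}>\delta \ \Longrightarrow\ \sup_{\TT}u \le C_{1}M_{0}-\frac{L}{C_{1}\log(C_{2}/\delta)},
\]
so it suffices to produce a constant lower bound $\sup_{\TT}u\ge s_{0}=s_{0}(a,v,\omega,E)$, independent of $l$.

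Third---the principal difficulty---is exhibiting a point $x_\star\in\TT$ with $\tfrac{1}{l}\log|f_l^a(x_\star)|\ge s_{0}$. By the LDT of Lemma~\ref{ldtformatrices} applied to $M_l^a$, together with Lemma~\ref{Ln_L}, there exists $x_{0}\in\TT$ with $\|M_l^a(x_{0})\|\ge \exp(l(L^{a}-\tfrac{\gamma}{10}))$, where $L^a = L+D$ by~\eqref{la}. The matrix identity~\eqref{Mnazdet} decomposes $\|M_l^a(x_{0})\|$ into the max of four quantities, so one of
\[
f_l^a(x_{0}),\ \tilde a(x_{0})f_{l-1}^a(x_{0}+\omega),\ a(x_{0}+l\omega)f_{l-1}^a(x_{0}),\ \tilde a(x_{0})a(x_{0}+l\omega)f_{l-2}^a(x_{0}+\omega)
\]
is at least $\tfrac{1}{2}\exp(l(L^{a}-\tfrac{\gamma}{10}))$. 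Choosing $x_{0}$ away from a small neighborhood of the finitely many zeros of $a$ and $\tilde a$, and combining with the three-term recurrence $f_l^a(x)=(v(x+l\omega)-E)f_{l-1}^a(x)-|a(x+l\omega)|^{2}f_{l-2}^a(x)$ together with the shift-invariance of Lebesgue measure, one funnels the largeness to the diagonal entry itself and obtains $x_\star\in\TT$ with $\tfrac{1}{l}\log|f_l^a(x_\star)|\ge L^{a}-\tfrac{\gamma}{5}=:s_{0}$.

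Fourth, inserting $s_{0}$ and $L=l^{2}/2$ into Lemma~\ref{sh_better_upper_bound} yields
\[
\mes\{u<-L\}\le C_{2}\exp\!\Bigl(-\frac{L}{C_{1}(C_{1}M_{0}-s_{0})}\Bigr)\le \exp(-c_{1}l^{2})
\]
for a constant $c_{1}=c_{1}(a,v,\omega,E,\gamma)>0$. Together with $\cB_{1}$ from Step~1, the total exceptional measure is at most $e^{-cl}+e^{-c_{1}l^{2}}\le e^{-l}$ once $l\ge l_{0}(a,v,\gamma)$. The main obstacle is Step~3: the LDT furnishes only largeness of the matrix \emph{norm}, and some work with the recurrence and translation-invariance is needed to transfer this largeness to the specific entry $f_l^a$.
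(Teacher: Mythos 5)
Your Steps 1, 2 and 4 are mechanically sound: the passage from $f_l$ to $f^a_l$ via Theorem~\ref{sbet} applied to $\log|a|$, the use of Lemma~\ref{sh_better_upper_bound} with $L=l^2/2$, and the final bookkeeping all work, and in fact you need far less than your $s_0=L^a-\gamma/5$ there (any bound $\sup_{\TT}\tfrac1l\log|f^a_l|\ge -cl$ with $c$ small would already give measure $\le e^{-l}$). Note also that the paper does not reprove this lemma at all: it cites Lemma 4.2 of \cite{BV13} verbatim, only checking that the new smallest deviation $\delta_0^n$ is still much smaller than $\gamma$, so your attempt is necessarily a reconstruction rather than a parallel of the paper's argument.

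The genuine gap is Step 3, which you yourself flag as the principal difficulty but do not actually close. The LDT of Lemma~\ref{ldtformatrices} gives largeness of $\|M^a_l(x)\|$, hence of \emph{one} of the four entries in \eqref{Mnazdet}, i.e.\ of one of $f^a_l(x)$, $f^a_{l-1}(x)$, $f^a_{l-1}(x+\omega)$, $f^a_{l-2}(x+\omega)$; nothing you invoke excludes the scenario in which, at every good $x$, the largeness sits on the length-$(l-1)$ or length-$(l-2)$ entries while $f^a_l$ is small everywhere. The three-term recurrence cannot ``funnel'' largeness into $f^a_l$: in $f^a_l(x)=(v(x+l\omega)-E)f^a_{l-1}(x)-|a(x+l\omega)|^2f^a_{l-2}(x)$ the two terms may cancel exactly (and when $v(x+l\omega)\approx E$ the identity gives no information), and largeness of $f^a_{l-1}$ propagates upward to $f^a_{l+1}$, not to $f^a_l$; shift-invariance of Lebesgue measure does not help, since the cancellation could in principle occur at almost every point. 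Ruling out precisely this scenario \emph{is} the content of the lemma (it is a Wegner-type statement: $|f^a_l(x,E)|\le e^{-l^3}$ forces $\dist(E,\spec H_l(x))\le e^{-l^2}$, and one must show this happens only on a set of $x$ of measure $e^{-l}$), which is why the paper's own machinery never bounds a single determinant from below at this stage: Lemma~\ref{three_determinants} works with three shifted determinants and the direction lemmas \ref{expanding_contracting_directions}--\ref{wedge_triangle_ineq} exactly because the single-determinant lower bound is unavailable before the BMO/John--Nirenberg step, and pointwise largeness of $f^a_l$ of the strength you assert only emerges from Lemma~\ref{avg_entries->lower_bound} and the final LDTs, which depend on the present lemma through Lemma~\ref{small_fl_is_small}; patching Step 3 with those results would therefore be circular. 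To complete your route you would need an independent construction, for each $l$, of a point where $f^a_l$ itself is not super-exponentially small --- this is what Binder--Voda's proof supplies and what the paper defers to.
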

\begin{proof}
 It is the same as  Lemma 4.2 in \cite{BV13}, which was for the strong Diophantine $\omega$. We have the same reason which we just stated in the  proof of Lemma \ref{Ln_L} to omit  this proof.
\end{proof}
Note that in order to simplify the notation, we suppressed the dependence on $E$ and $\omega$. We will be doing this throughout this paper if there is no confusion.
According to Lemma \ref{sh_better_upper_bound} and \ref{lem:considerable-difficulty-lemma}, we can have more choices of the deviation and the exceptional measure.
\begin{lemma}
\label{small_fl_is_small}Let $\sigma>0$ and $g(n)>0$. There exist constants
$l_{0}=l_{0}\left( a, v,\gamma \right)$ and $n_{0}=n_{0}\left( a, v,\gamma\right)$
such that
\[
\mes\left\{ x\in\TT:\,\left|f_{l}\left(x\right)\right|\le\exp\left(-g(n)\right)\right\} \le\exp\left(-g(n)l^{-3}\right)
\]
for any $n\ge n_{0}$ and for any $l_{0}\le l \lesssim g(n)$. The
same result, but with possibly different $l_{0}$ and $n_{0}$, holds
for $f^u_{l}$.\end{lemma}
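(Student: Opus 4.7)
The plan is to argue by contradiction, combining the subharmonicity of $\frac{1}{l}\log|f_l^a|$ with Lemma \ref{sh_better_upper_bound} and a lower bound on its spatial average deduced from Lemma \ref{Ln_L}. Heuristically, if the exceptional set $\{x : |f_l(x)| \le e^{-g(n)}\}$ were larger than $e^{-g(n)/l^3}$, Lemma \ref{sh_better_upper_bound} would push $\sup_\TT \frac{1}{l}\log|f_l^a|$ below what positivity of $L(E,\omega)$ permits.

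First, I would set $U_l(z) := \frac{1}{l}\log|f_l^a(z,E,\omega)|$. By \eqref{Mnazdet}, $f_l^a$ is entire in $z$, so $U_l$ is subharmonic on a strip $\cA_\rho$ and bounded above there by $M_0 := \log(3\|a\|_\infty + 2\|v\|_\infty)$. The relation $f_l = f_l^a/\prod_{j=1}^l a(\,\cdot+j\omega)$ from \eqref{MnzdetMnaz}, combined with Theorem \ref{sbet} applied to the subharmonic function $\log|a|$, translates the event $\{|f_l(x)| \le e^{-g(n)}\}$ into $\{U_l(x) \le -g(n)/l + D + 1\}$ up to a set of measure $\le e^{-cl}$, where $D := \int_\TT \log|a|$.

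Next, assuming for contradiction that $\mes\{|f_l(x)| \le e^{-g(n)}\} > e^{-g(n)/l^3}$, then under the hypothesis $l_0 \le l \lesssim g(n)$ the correction $e^{-cl}$ is dominated by $\tfrac12 e^{-g(n)/l^3}$, so Lemma \ref{sh_better_upper_bound} applied with threshold $g(n)/l - D - 1$ and $\delta = \tfrac12 e^{-g(n)/l^3}$ yields
\[
\sup_\TT U_l \le C_1 M_0 - \frac{g(n)/l - D - 1}{C_1\bigl(\log(2C_2) + g(n)/l^3\bigr)}.
\]
On the other hand, a Furstenberg--Kesten type comparison between the $(1,1)$-entry $f_l^a$ and the full matrix $M_l^a$ in \eqref{Mnazdet}, together with Lemma \ref{Ln_L}, gives $\int U_l \ge L^a - 1$ for $l$ large enough, hence $\sup_\TT U_l \ge L^a - 1$ with $L^a = L(E,\omega) + D$. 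Splitting the hypothesis $l \lesssim g(n)$ into the two regimes $g(n)/l^3 \gtrless \log(2C_2)$, the negative term above is either $\gtrsim l^2/C_1^2$ (in the first regime) or $\gtrsim 1$ (in the second, using $g(n)/l \gtrsim 1$), so for $l \ge l_0 = l_0(a,v,\gamma)$ this drops the right-hand side below $L^a - 1$, producing the contradiction.

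The $f_l^u$ version follows analogously from \eqref{MnuzdetMnaz}, its extra factor $\prod |a/\tilde a|^{1/2}$ contributing a bounded combination of $\log|a|$ and $\log|\tilde a|$ that is controlled by Theorem \ref{sbet}. The role of $n_0$ is merely to guarantee $g(n) \ge C_* l_0$ so that the admissible range $l_0 \le l \lesssim g(n)$ is non-empty. The main obstacle will be the clean bookkeeping across the two regimes of $g(n)/l^3$ versus the constants $C_1, C_2$ of Lemma \ref{sh_better_upper_bound}, together with verifying the lower bound $\int U_l \ge L^a - 1$, which requires a single-entry versus full-norm comparison for $M_l^a$ and relies crucially on the positivity of the Lyapunov exponent.
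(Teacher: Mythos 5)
The decisive step of your argument is the lower bound $\int_\TT U_l \ge L^a-1$ (hence $\sup_\TT U_l\ge L^a-1$) for the single entry $f_l^a$, and this is exactly where the proposal breaks down. A ``Furstenberg--Kesten type comparison'' together with Lemma \ref{Ln_L} controls $\frac1l\int\log\Vert M_l^a\Vert=L_l^a$, i.e.\ the full matrix norm; it gives no lower bound on the average of $\log$ of one entry, since the norm is comparable only to the largest of the three determinants $f_l^a$, $f_{l-1}^a$, $f_{l-2}^a$, and your contradiction hypothesis constrains only the scale $l$. In the paper, the statement you are invoking is precisely Lemma \ref{avg_entries->lower_bound} (the entry-versus-norm average bound), which is proved \emph{after} the present lemma and whose proof uses the present lemma both directly (with $g(n)=n^3$) and through Lemma \ref{three_determinants} (with $g(n)=20n\delta_0^n$); so assuming it here is circular. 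The paper's endgame is different: after Lemma \ref{sh_better_upper_bound} forces $\sup_\TT|f_l^a|\le\exp(-Cl^3)$, it transfers this back to $f_l$ via Theorem \ref{sbet} applied to $\log|a|$ (losing only a set of measure $e^{-cl}$) and contradicts the a priori input of Lemma \ref{lem:considerable-difficulty-lemma} (from \cite{BV13}), namely $\mes\{|f_l|\le e^{-l^3}\}\le e^{-l}$. Your proposal never uses that lemma, and without it (or Lemma \ref{avg_entries->lower_bound}) there is no lower-bound information about the determinant itself against which to get a contradiction.

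Two secondary problems. First, the forward translation from $f_l$ to $f_l^a$ should not go through Theorem \ref{sbet}: the pointwise bound $\prod_{j=1}^l|a(x+j\omega)|\le\|a\|_\infty^l\le e^{g(n)/2}$ (valid for $l\lesssim g(n)$) does it with no exceptional set, which is what the paper does. As written, your claim that the loss $e^{-cl}$ is dominated by $\tfrac12 e^{-g(n)/l^3}$ whenever $l_0\le l\lesssim g(n)$ is false: it needs roughly $l^4\gtrsim g(n)$, and fails e.g.\ for $g(n)=n^3$ with $l$ of order $l_0$ or $\log n$. Second, even granting $\sup_\TT U_l\ge L^a-1$, your ``second regime'' $g(n)/l^3\le\log(2C_2)$ does not close: there $l\lesssim g(n)$ only yields $g(n)/l\gtrsim1$, so the subtracted term is $O(1)$, and $C_1M_0-O(1)$ need not drop below $L^a-1$ since $C_1M_0$ exceeds $L^a$ by a fixed amount. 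The paper's version of the contradiction is against the fixed threshold $e^{-l^3}$ of Lemma \ref{lem:considerable-difficulty-lemma} rather than against the Lyapunov exponent, which is why it does not face this obstruction.
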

\begin{proof}
Assume
\[
\mes\left\{ x\in\mathbb{T}:\,\left|f_{l}\left(x\right)\right|\le\exp\left(-g(n)\right)\right\} >\exp\left(-g(n)l^{-3}\right).
\]
 We have that
\begin{align*}
\left|f^a_{l}\left(x\right)\right| & =\left|f_{l}\left(x\right)\right|\prod_{j=1}^{l}\left|a\left(x+j\omega\right)\right|\le\exp\left(-g(n)\right)C^{l-1}
\le\exp\left(-\frac{1}{2}g(n)\right)
\end{align*}
on a set of measure greater than $\exp\left(-g(n)l^{-3}\right)$.
By  Lemma \ref{sh_better_upper_bound}, it implies that for any $x\in\mathbb{T}$,
\[
\left|f^a_{l}\left(x\right)\right|\le\exp\left(C_{1}l-\frac{g(n)}{2C_{1}\log\left(C_{2}\exp\left(g(n)l^{-3}\right)\right)}\right)\le\exp\left(-Cl^{3}\right)
.\]
Due to Theorem \ref{sbet},
\[\mes\left (\left \{x\in \mathbb{T}:|\sum_{k=1}^l \log|a(x+k\omega)-lD|>l\right \}\right ) \leq  \exp\left(-cl\right).\]
Therefore, recalling (\ref{MnzdetMnaz}), we have
\[
\left|f_{l}\left(x\right)\right|\le\exp\left(l(1-D)-C'l^{3}\right)\le\exp\left(-Cl^{3}\right)
\]
for all $x$ except for a set of measure less than $\exp\left(-cl\right)$.
It contradicts with the previous lemma. At last, by (\ref{MnuzdetMnaz}), we can prove the result for $f^u_l$ by similar methods.
\end{proof}

Now we need some facts about stability of contracting
and expanding directions of unimodular matrices. It follows from the
polar decomposition that if $A\in SL\left(2,\mathbb{C}\right)$ then there
exist unit vectors $u_{A}^{+}\perp u_{A}^{-}$ and $v_{A}^{+}\perp v_{A}^{-}$
such that $Au_{A}^{+}=\left\Vert A\right\Vert v_{A}^{+}$ and $Au_{A}^{-}=\left\Vert A\right\Vert ^{-1}v_{A}^{-}$.
\begin{lemma}[Lemma 2.5 in \cite{GS08}]
\label{expanding_contracting_directions}
For any $A$, $B\in SL\left(2,\mathbb{C}\right)$ we have
\begin{align*}
\left|Bu_{AB}^{-}\wedge u_{A}^{-}\right| & \le\left\Vert A\right\Vert ^{-2}\left\Vert B\right\Vert ,\,\left|u_{BA}^{-}\wedge u_{A}^{-}\right|\le\left\Vert A\right\Vert ^{-2}\left\Vert B\right\Vert ^{2}\\
\left|v_{AB}^{+}\wedge v_{A}^{+}\right| & \le\left\Vert A\right\Vert ^{-2}\left\Vert B\right\Vert ^{2},\,\left|v_{BA}^{+}\wedge Bv_{A}^{+}\right|\le\left\Vert A\right\Vert ^{-2}\left\Vert B\right\Vert .
\end{align*}
\end{lemma}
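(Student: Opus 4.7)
The plan is to prove each of the four inequalities via the polar (singular-value) decomposition of matrices in $SL(2,\mathbb{C})$, exploiting the fact that unimodularity forces the two singular values of any $X \in SL(2,\mathbb{C})$ to equal $\|X\|$ and $\|X\|^{-1}$. The single reciprocity identity $\|X^{-1}\| = \|X\|$ then yields submultiplicativity in both directions; in particular $\|A\| \le \|AB\|\,\|B\|$ (from $A = AB\cdot B^{-1}$) and its rearrangement $\|AB\|^{-1} \le \|A\|^{-1}\|B\|$, which I will invoke repeatedly, together with its twin $\|BA\|^{-1} \le \|A\|^{-1}\|B\|$.

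The common template for all four estimates is: (i) expand the relevant vector in one of the orthonormal frames $\{u_X^+, u_X^-\}$ or $\{v_X^+, v_X^-\}$ for a suitable $X \in \{A, AB, BA\}$; (ii) apply one of $A$, $B$, $AB$, $BA$ using the SVD action $Xu_X^\pm = \|X\|^{\pm 1} v_X^\pm$; and (iii) read off the wedge product as a single scalar coefficient, which is then straightforward to bound.

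For the first inequality I would write $Bu_{AB}^- = \alpha u_A^+ + \beta u_A^-$ and apply $A$ to obtain $ABu_{AB}^- = \alpha\|A\| v_A^+ + \beta\|A\|^{-1} v_A^-$. Since $\|ABu_{AB}^-\| = \|AB\|^{-1}$, taking norms forces $|\alpha|\|A\| \le \|AB\|^{-1}$; the wedge $|Bu_{AB}^- \wedge u_A^-|$ equals $|\alpha|$ by orthonormality of $\{u_A^+, u_A^-\}$, and submultiplicativity $\|AB\|^{-1} \le \|A\|^{-1}\|B\|$ closes the argument. The second inequality runs the same template on the decomposition $u_{BA}^- = \alpha u_A^+ + \beta u_A^-$, but now applies only $A$ to $u_{BA}^-$ and then uses $\|Au_{BA}^-\| \le \|B\|\,\|BAu_{BA}^-\| = \|B\|\,\|BA\|^{-1}$, producing an extra factor of $\|B\|$ compared with the first bound.

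For inequality~4 I would expand $u_A^+ = \alpha u_{BA}^+ + \beta u_{BA}^-$, apply $BA$, and combine with the identity $Bv_A^+ = \|A\|^{-1}BAu_A^+$ to get $|v_{BA}^+ \wedge Bv_A^+| = |\beta|\,\|A\|^{-1}\|BA\|^{-1}$; the trivial estimate $|\beta|\le 1$ together with $\|BA\|^{-1} \le \|A\|^{-1}\|B\|$ yields the stated bound. Inequality~3 is analogous: I decompose $Bu_{AB}^+ = \gamma u_A^+ + \delta u_A^-$, apply $A$ to read off $\|AB\|v_{AB}^+ = ABu_{AB}^+ = \gamma\|A\|v_A^+ + \delta\|A\|^{-1}v_A^-$, obtain $|v_{AB}^+ \wedge v_A^+| = |\delta|\,\|A\|^{-1}\|AB\|^{-1}$, and bound $|\delta| \le \|Bu_{AB}^+\| \le \|B\|$, giving the $\|A\|^{-2}\|B\|^2$ estimate after one more application of submultiplicativity. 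There is no deep obstacle; the only chore is keeping the bookkeeping straight about which orthonormal basis is used and which side of the submultiplicativity inequality is invoked at each stage.
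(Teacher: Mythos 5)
Your proposal is correct: each of the four bounds follows exactly as you describe from the singular-value action $Xu_X^{\pm}=\|X\|^{\pm1}v_X^{\pm}$, orthonormality of the frames (which turns each wedge into a single coefficient), and the unimodularity consequences $\|X^{-1}\|=\|X\|$, $\|AB\|^{-1}\le\|A\|^{-1}\|B\|$, $\|BA\|^{-1}\le\|A\|^{-1}\|B\|$. The paper itself offers no proof, simply quoting Lemma 2.5 of \cite{GS08}, and your argument is the standard one behind that result, so nothing needs to be added.
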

\begin{lemma}[Lemma 4.5 in \cite{BV13}]
\label{wedge_triangle_ineq} If $A\in SL\left(2,\mathbb{C}\right)$ and
$w_{1}$, $w_{2}$, and $w_{3}$ are unit vectors in the plane then
\[
\left|w_{1}\wedge Aw_{2}\right|\le\left|w_{1}\wedge Aw_{3}\right|+\sqrt{2}\left\Vert A^{-1}\right\Vert \left|w_{2}\wedge w_{3}\right|
\]
and
\[
\left|w_{1}\wedge Aw_{2}\right|\le\left|w_{3}\wedge Aw_{2}\right|+\sqrt{2}\left\Vert A\right\Vert \left|w_{1}\wedge w_{3}\right|
\]
\end{lemma}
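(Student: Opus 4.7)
The plan is to establish both inequalities by the same elementary decomposition: expand the ``varying'' unit vector in the orthonormal frame determined by another unit vector, and then control the resulting wedge products by the operator norm of $A$. The argument needs nothing beyond bilinearity of the wedge, the Hadamard estimate on $2\times 2$ matrices, and the identity $\det A=1$.

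For the first inequality, I pick a unit vector $w_{3}^{\perp}$ Hermitian-orthogonal to $w_{3}$, so that $[w_{3}\mid w_{3}^{\perp}]$ is unitary and in particular $|w_{3}^{\perp}\wedge w_{3}|=1$. Writing $w_{2}=\alpha w_{3}+\beta w_{3}^{\perp}$ with $|\alpha|^{2}+|\beta|^{2}=1$, bilinearity of the wedge yields $|\beta|=|w_{2}\wedge w_{3}|$ and
\[
w_{1}\wedge Aw_{2}=\alpha(w_{1}\wedge Aw_{3})+\beta(w_{1}\wedge Aw_{3}^{\perp}).
\]
Applying Hadamard to the $2\times 2$ matrix $[w_{1}\mid Aw_{3}^{\perp}]$ gives $|w_{1}\wedge Aw_{3}^{\perp}|\le \|w_{1}\|\cdot\|Aw_{3}^{\perp}\|\le\|A\|$, whence, using $|\alpha|\le 1$,
\[
|w_{1}\wedge Aw_{2}|\le|w_{1}\wedge Aw_{3}|+|w_{2}\wedge w_{3}|\cdot\|A\|.
\]
To convert $\|A\|$ into $\|A^{-1}\|$ I use the fact that for $A\in SL(2,\mathbb{C})$ the singular values $\sigma_{1}\ge\sigma_{2}>0$ satisfy $\sigma_{1}\sigma_{2}=|\det A|=1$, so $\|A\|=\sigma_{1}=1/\sigma_{2}=\|A^{-1}\|$. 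This yields the first inequality; in fact the constant obtained is $1$, sharper than the stated $\sqrt{2}$.

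The second inequality is entirely symmetric: I keep the same frame $\{w_{3},w_{3}^{\perp}\}$ but now decompose $w_{1}=\alpha'w_{3}+\beta'w_{3}^{\perp}$, so that $|\beta'|=|w_{1}\wedge w_{3}|$, and expand
\[
w_{1}\wedge Aw_{2}=\alpha'(w_{3}\wedge Aw_{2})+\beta'(w_{3}^{\perp}\wedge Aw_{2}).
\]
Hadamard again controls the second summand by $\|Aw_{2}\|\le\|A\|$, giving the second inequality. The only real obstacle is bookkeeping between Hermitian orthogonality and the complex wedge product (one must verify that unit Hermitian-orthogonal vectors form a unitary frame so their wedge has modulus $1$); beyond this, the argument is purely algebraic and requires no analytic input from the rest of the paper.
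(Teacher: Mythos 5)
Your proof is correct, and in fact it is self-contained where the paper is not: the paper simply cites Lemma 4.5 of \cite{BV13} and gives no argument of its own. Each step checks out: for unit vectors in $\mathbb{C}^2$ the wedge $u\wedge v=\det[u\,|\,v]$ is bilinear, the Lagrange identity $\|u\|^2\|v\|^2=|\langle u,v\rangle|^2+|u\wedge v|^2$ justifies both your ``Hadamard'' bound $|u\wedge v|\le\|u\|\,\|v\|$ and the identities $|w_2\wedge w_3|=|\beta|$, $|w_1\wedge w_3|=|\beta'|$ in the unitary frame $\{w_3,w_3^{\perp}\}$, and $\|A\|=\|A^{-1}\|$ for $A\in SL(2,\mathbb{C})$ since the singular values multiply to $|\det A|=1$. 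Your route differs from the one in \cite{BV13}: there one writes $Aw_2=Aw_3+A(w_2-w_3)$ and bounds $\|A(w_2-w_3)\|\le\|A\|\,\|w_2-w_3\|$, and the factor $\sqrt{2}$ enters through the elementary estimate $\|w_2-w_3\|\le\sqrt{2}\,|w_2\wedge w_3|$, which is only valid after adjusting $w_2$ by a unimodular phase so that $\langle w_2,w_3\rangle\ge 0$ (a harmless but easily overlooked normalization). Your orthogonal-frame decomposition avoids that phase adjustment entirely and yields the sharper constant $1$ in place of $\sqrt{2}$, which of course still implies the stated inequalities; the cited form with $\sqrt{2}$ is merely what is needed downstream in Lemma \ref{three_determinants}, so nothing in the paper is affected by the improvement.
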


Now, we can improve the Lemma \ref{small_fl_is_small}, but the LDT is about three determinants.
\begin{lemma}
\label{three_determinants}  There exist constants $0<\kappa=\kappa(\omega)<1$, $0<\tau=\tau(\omega)<1$,
$l_{0}=l_{0}\left( a, v,\gamma \right)$ and $n_{0}=n_{0}\left( a, v,\gamma\right)$ such that
\begin{equation}\label{three_determinants_estimate}
\mes\left\{ x\in\TT:\,\left|f^u_{n}\left(x\right)\right|+\left|f^u_{n}\left(x+j_{1}\omega\right)\right|+\left|f^u_{n}\left(x+j_{2}\omega\right)\right|\le\exp\left(nL_{n}-100n\delta_0^n\right)\right\}
\le\exp\left(-n^{1-\kappa}\right)
\end{equation}
for any $l_{0}\le j_{1}\le j_{1}+l_{0}\le j_{2}\le n^{\tau}$
and $n\ge n_{0}$.\end{lemma}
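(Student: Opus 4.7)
The plan is to argue by contradiction. Suppose the bad set $\mathcal{B}$ — where all three $|f^u_n(x+j\omega)|$, $j\in\{0,j_1,j_2\}$, are bounded by $\exp(nL_n-100n\delta_0^n)$ — has measure exceeding $\exp(-n^{1-\kappa})$. First I would carry out a clean-up: by Lemma \ref{ldtformatrices} applied at the three shifts with deviation $\delta_0^n$, and by Lemma \ref{small_fl_is_small} applied to the short determinants $f^u_{j_\alpha}(x)$, $f^u_{j_2-j_1}(x+j_1\omega)$, and in particular $f^u_{j'-j-1}$ for each pair $j<j'\in\{0,j_1,j_2\}$ (with $g(n)=cn\delta_0^n$ and $l\in[l_0,n^\tau]$), each exceptional set has measure $\ll\exp(-n^{1-\kappa})$ for $\tau=\tau(\omega)$ sufficiently small and $\kappa=\kappa(\omega)$ chosen compatibly with the scale of $\delta_0^n$. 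Intersecting with $\mathcal{B}$ leaves a subset $\mathcal{B}'\subset\mathcal{B}$ of measure at least $\tfrac12\exp(-n^{1-\kappa})$ on which, in addition, $\|M^u_n(x+j\omega)\|\ge\exp(nL_n-n\delta_0^n)$ and the short determinants are bounded below by $\exp(-cn\delta_0^n)$.

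Next I would exploit the matrix identity~(\ref{Mnuzdet}), by which $f^u_n(x+j\omega)$ equals the $(1,1)$-entry of $A_j:=M^u_n(x+j\omega)$ up to a bounded multiplicative factor. On $\mathcal{B}'$ one has $|(A_j)_{1,1}|/\|A_j\|\le\exp(-99n\delta_0^n)$. Expanding $(A_j)_{1,1}=\|A_j\|\langle v^+_{A_j},e_1\rangle\langle u^+_{A_j},e_1\rangle+\|A_j\|^{-1}\langle v^-_{A_j},e_1\rangle\langle u^-_{A_j},e_1\rangle$ in the singular basis forces, for each $j$, the dichotomy that either the contracting input direction $u^-_{A_j}$ or the contracting output direction $v^-_{A_j}$ is $\exp(-49n\delta_0^n)$-aligned with $e_1$. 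The point of having three shifts is that pigeonhole then produces two indices $j<j'$ falling in the same alternative.

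The key step is the transport. Using the cocycle identity
\[
A_{j'}=M^u_{j'-j}\bigl(x+(n+j)\omega\bigr)\,A_j\,\bigl(M^u_{j'-j}(x+j\omega)\bigr)^{-1},
\]
Lemma \ref{expanding_contracting_directions} to move contracting directions across the short-length factors, and Lemma \ref{wedge_triangle_ineq} to combine the two alignment estimates, I would show that the middle cocycle $B:=M^u_{j'-j}(x+j\omega)$ must approximately preserve $e_1$. The transport error is at most $\|A_j\|^{-2}\|B\|^2\le\exp(-2nL_n+Cn^\tau)$, which is dwarfed by the alignment gain $\exp(-49n\delta_0^n)$. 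By (\ref{Mnuzdet}) applied to $B$, this approximate preservation translates into $|f^u_{j'-j-1}(x+j\omega)|\le\|B\|\exp(-49n\delta_0^n)\le\exp(Cn^\tau-49n\delta_0^n)\le\exp(-10n\delta_0^n)$. The mirror alternative $v^-_{A_j}\approx e_1$ is handled by the analogous identity applied on the other side and yields the same kind of upper bound on a short determinant. In both cases one contradicts the lower bound from the clean-up with $c<10$.

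The main obstacle will be the bookkeeping in this last transport step: verifying quantitatively that $\tau$ can be chosen small enough so that $Cn^\tau\ll n\delta_0^n$ and that the window $l\lesssim g(n)=cn\delta_0^n$ in Lemma \ref{small_fl_is_small} really accommodates $l\le n^\tau$. Under Hypothesis H.2, where $\delta_0^n\sim 1/\log(\Delta^{-1}(C_\omega n))$ is essentially of order one, both conditions are immediate and $\kappa$ can be taken close to zero; under H.1 and H.3, where $\delta_0^n$ is close to the polynomial scale $n^{-1+}$, one must choose $\kappa=\kappa(\omega)$ correspondingly close to $1$ and $\tau$ correspondingly small. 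The use of three shifts rather than two is essential: with only two, a mismatch between the two alternatives in the dichotomy above would leave the argument without a contradiction.
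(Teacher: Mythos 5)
Your proposal follows essentially the same route as the paper's proof: argue by contradiction, expand the $(1,1)$ entry of $M^u_n$ in its singular directions to force an $\exp(-cn\delta_0^n)$-alignment of $u^-$ or $v^-$ with $e_1$, pigeonhole over the three shifts, transport the aligned direction across the short block $M^u_{j_2-j_1}$ using Lemmas \ref{expanding_contracting_directions} and \ref{wedge_triangle_ineq}, read off from (\ref{Mnuzdet}) that a short determinant $f^u_{j_2-j_1-1}$ (or $f^u_{j_1-1}$, $f^u_{j_2-1}$) is exponentially small, and contradict Lemma \ref{small_fl_is_small}; whether Lemma \ref{small_fl_is_small} is invoked at the start (your pointwise clean-up) or at the end (the paper, at the level of measures) is immaterial.

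The genuine gap is the measure claim in your clean-up. Lemma \ref{ldtformatrices} has exceptional measure $\exp(-\hat c\delta n)+\exp(-\check c\delta^{2}n)$, so using it at deviation $\delta_0^n$ (which is what your alignment threshold $\exp(-49n\delta_0^n)$ requires) costs a set of measure of order $\exp\left(-\check c(\delta_0^n)^{2}n\right)$. Under H.1 or H.3 with slowly growing $\Delta$, e.g.\ $\Delta(t)=t(\log t+1)^{\alpha}$ for which $\delta_0^n\sim \log n/n^{1-}$, one has $(\delta_0^n)^{2}n\to 0$, so this bound does not even tend to zero and can never be $\ll\exp(-n^{1-\kappa})$; your prescription of taking $\kappa$ close to $1$ and $\tau$ small does not help, since this term is independent of $\kappa$ and $\tau$, and replacing $\delta_0^n$ by a larger deviation in Lemma \ref{ldtformatrices} destroys the alignment deduction because the threshold in the statement is only $nL_n-100n\delta_0^n$. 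The paper handles this by an explicit case split: it assumes $\delta_0^n\ge n^{-1/3}$, for which $(\delta_0^n)^{2}n\ge n^{1/3}$ dominates $n^{1-\kappa}$, and refers the complementary regime $\delta_0^n\le n^{-1/3}$ to the proof of Lemma 4.6 in \cite{BV13}; without such a split (or a substitute argument for small $\delta_0^n$) your proof does not cover all frequencies admitted by the statement. Two smaller omissions, fixable with the same tools: the bound $\|M^u_{j_2-j_1}\|\le\exp(Cn^{\tau})$ is not a pointwise estimate (the unimodular cocycle blows up near the zeros of $a$), so the good set must also control $\log\|M^u_{j}(x+l\omega)\|$ for all short lengths and shifts together with the Birkhoff sums of $\log|a|$, exactly as in the paper's set $\cG_n$; and in the mirror alternative the small short determinant sits at the translated point $x+(n+j_1+1)\omega$, so your clean-up list must include these translates as well.
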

\begin{proof}
Here we assume $\delta_0^n\ge n^{-\frac{1}{3}}$, since the proof of Lemma 4.6 in \cite{BV13} can be applied without any change when $\delta\le n^{-\frac{1}{3}}$.

For any $1\le j\le n$, due to  Lemma \ref{ldtformatrices} and \ref{Ln_L}, choose the deviation $\delta=\frac{n}{j}\delta_0^n>\delta_0^j$ and then
\begin{equation}\label{sldteforulu}
 \mes\left\{x:\left|\log\|M_l^u(x)\|-jL\right|> n\delta_0^n\right\}<\exp\left(- \hat  c\delta j\right)+\exp\left(- \check  c\delta^2 j\right)\le 2\exp\left(-  \check  c\left(\delta_0^n\right)^2 n\right).
  \end{equation}
Let $\mathcal{G}_{n}$ be the set of points $x\in\TT$ such that for any $1\le j\le n$
and $\left|l\right|\le 2n$,
$$\left|\log\left\Vert M^u_{j}\left(x+l\omega\right)\right\Vert -jL\right|\le n\delta_0^n,$$
and $$\left|\log\left|a\left(x+j\omega\right)\right|-D\right|\le n\delta_0^n.$$  Due to (\ref{sldteforulu}) and Theorem \ref{sbet} for $\log|a(x)|$, we have that
\[\mes\left(\TT\setminus\cG_{n}\right)\le 4n^2\exp\left(-  \check  c\left(\delta_0^n\right)^2 n\right)\le \exp\left(-c\left(\delta_0^n\right)^2 n\right).\]
 Note that $\det M_l^u(x,E,\omega)\equiv 1$. Therefore, for any $x,\ E$ and $\omega$,
$$\left\|M_l^u(x,E,\omega)\right\|=\left\|\left(M_l^u\right)^{-1}(x,E,\omega)\right\|.$$

Let $\left\{ e_{1},e_{2}\right\} $ be the standard basis of $\mathbb{R}^{2}$ and for any integer $j$,  $u_j^+$, $u_j^1$, $v_j^+$ and $v_j^-$ be the unit vectors satisfying $u_{j}^{+}\perp u_{j}^{-}$, $v_{j}^{+}\perp v_{j}^{-}$, $M_j^uu_{j}^{+}=\left\Vert M_j^u\right\Vert v_{j}^{+}$ and $M_j^uu_{j}^{-}=\left\Vert M_j^u\right\Vert ^{-1}v_{j}^{-}$. Then
\begin{eqnarray*}
f^u_{n}\left(x\right)&=&M^u_{n}\left(x\right)e_{1}\wedge e_{2}
=\left(M^u_{n}\left(x\right)\left[\left(u_{n}^{+}\left(x\right)\cdot e_{1}\right)u_{n}^{+}\left(x\right)+\left(u_{n}^{-}\left(x\right)\cdot e_{1}\right)u_{n}^{-}\left(x\right)\right]\right)\wedge e_{2}\\
&=&\left(u_{n}^{+}\left(x\right)\cdot e_{1}\right)\left\Vert M^u_{n}\left(x\right)\right\Vert v_{n}^{+}\left(x\right)\wedge e_{2}+\left(u_{n}^{-}\left(x\right)\cdot e_{1}\right)\left\Vert M^u_{n}\left(x\right)\right\Vert ^{-1}v_{n}^{-}\left(x\right)\wedge e_{2}.
\end{eqnarray*}
If $\left|f^u_{n}\left(x\right)\right|\le\exp\left(nL_n-100n\delta_0^n\right)$,
then
\[
\left\Vert M^u_{n}\left(x\right)\right\Vert \left|u_{n}^{+}\left(x\right)\cdot e_{1}\right|\left|v_{n}^{+}\left(x\right)\wedge e_{2}\right|-\left\Vert M^u_{n}\left(x\right)\right\Vert ^{-1}\left|u_{n}^{-}\left(x\right)\cdot e_{1}\right|\left|v_{n}^{-}\left(x\right)\wedge e_{2}\right|
\le\exp\left(nL_n-100n\delta_0^n\right).
\]
Due to Lemma \ref{Ln_L}, for any $x\in \mathcal{G}_{n}$,
\begin{eqnarray*}
  \left|u_{n}^{-}\left(x\right)\wedge e_{1}\right|\left|v_{n}^{+}\left(x\right)\wedge e_{2}\right|&\le & \left\Vert M^u_{n}\left(x\right)\right\Vert^{-1}\exp\left(nL_n-100n\delta_0^n\right)+\left\Vert M^u_{n}\left(x\right)\right\Vert ^{-2}\\
  &\le &\exp\left(n\left(L_{n}-L\right)-99n\delta_0^n\right)+\exp\left(2n\delta_0^n-2nL\right)\\
  &\le &\exp\left(-90n\delta_0^n\right).
\end{eqnarray*}
Hence,
\begin{equation}
  \left|u_{n}^{-}\left(x\right)\wedge e_{1}\right|\le\exp\left(-40n\delta_0^n\right)\ \mbox{or}\ \left|v_{n}^{+}\left(x\right)\wedge e_{2}\right|\le\exp\left(-40n\delta_0^n\right).
\end{equation}

Suppose (\ref{three_determinants_estimate}) fails. Let $\sigma<\kappa<1/2$. Recall $n\delta_0^n\ge n^{1-2\sigma}$ and set
\[\tilde{\cG}_n:=\left\{ x\in\cG_{n}:\,\left|f^u_{n}\left(x\right)\right|+\left|f^u_{n}\left(x+j_{1}\omega\right)\right|+
\left|f^u_{n}\left(x+j_{2}\omega\right)\right|\le\exp\left(nL_{n}-100n\delta_0^n\right)\right\}.\]
We have
\[
\mes \tilde{\cG}_n
>\exp\left(-n^{1-\kappa}\right)-\exp\left(-n\delta_0^n\right)>\frac{1}{2}\exp\left(-n^{1-\kappa}\right).\]
If $x\in \tilde{\cG}_n$, then either
$\left|u_{n}^{-}\left(x\right)\wedge e_{1}\right|\le\exp\left(-40n\delta_0^n\right)$
or $\left|v_{n}^{+}\left(x\right)\wedge e_{2}\right|\le\exp\left(-40n\delta_0^n\right)$
has to hold for two of the points $x$, $x+j_{1}\omega$, $x+j_{2}\omega$.

We first assume that
\begin{equation}
\left|u_{n}^{-}\left(x+j_{1}\omega\right)\wedge e_{1}\right|\le\exp\left(-40n\delta_0^n\right)\quad\text{and}\quad\left|u_{n}^{-}\left(x+j_{2}\omega\right)\wedge e_{1}\right|\le\exp\left(-40n\delta_0^n\right).\label{first_catch}
\end{equation}
From Lemma \ref{wedge_triangle_ineq} and Lemma \ref{expanding_contracting_directions}, we have that if $x\in \cG_n$, then
\begin{eqnarray*}
&&\left|u_{n}^{-}\left(x+j_{2}\omega\right)\wedge M^u_{j_{2}-j_{1}}\left(x+j_{1}\omega\right)u_{n}^{-}\left(x+j_{1}\omega\right)\right|\\
&\le&\left|u_{n}^{-}\left(x+j_{2}\omega\right)\wedge M^u_{j_{2}-j_{1}}\left(x+j_{1}\omega\right)u_{n+j_{2}-j_{1}}^{-}\left(x+j_{1}\omega\right)\right|\\
&&
\ \ \ \ \ \ \ \ \ \ \ \ \ \ \ \ \ \ +C\left\Vert\left (M^u_{j_{2}-j_{1}}\right)^{-1}\left(x+j_{1}\omega\right)\right\Vert \left|u_{n+j_{2}-j_{1}}\left(x+j_{1}\omega\right)\wedge u_{n}^{-}\left(x+j_{1}\omega\right)\right|\\
&=&\left|u_{n}^{-}\left(x+j_{2}\omega\right)\wedge M^u_{j_{2}-j_{1}}\left(x+j_{1}\omega\right)u_{n+j_{2}-j_{1}}^{-}\left(x+j_{1}\omega\right)\right|\\
&&
\ \ \ \ \ \ \ \ \ \ \ \ \ \ \ \ \ \ +C\left\Vert M^u_{j_{2}-j_{1}}\left(x+j_{1}\omega\right)\right\Vert \left|u_{n+j_{2}-j_{1}}^{-}\left(x+j_{1}\omega\right)\wedge u_{n}^{-}\left(x+j_{1}\omega\right)\right|\\
&\le&\left\Vert M^u_{n}\left(x+j_{2}\omega\right)\right\Vert ^{-2}\left\Vert M^u_{j_{2}-j_{1}}\left(x+j_{1}\omega\right)\right\Vert\\
&&
\ \ \ \ \ \ \ \ \ \ \ \ \ \ \ \ \ \ +C\left\Vert M^u_{j_{2}-j_{1}}\left(x+j_{1}\omega\right)\right\Vert \left\Vert M^u_{n}\left(x+j_{1}\omega\right)\right\Vert ^{-2}\left\Vert M^u_{j_{2}-j_{1}}\left(x+\left(n+j_{1}\right)\omega\right)\right\Vert ^{2}\nonumber\\
&\le&\exp\left(\left(-2n+j_{2}-j_{1}\right)L+3n\delta_0^n\right)+C\exp\left(\left(-2n+3\left(j_{2}-j_{1}\right)\right)L+5n\delta_0^n\right)\nonumber\\
&\le &\exp\left(-nL\right)\nonumber.
\end{eqnarray*} Combined it with  Lemma \ref{wedge_triangle_ineq} and (\ref{first_catch}), we obtain
\begin{eqnarray*}
&&\left|e_{1}\wedge M^u_{j_{2}-j_{1}}\left(x+j_{1}\omega\right)e_{1}\right|\\
&\le&\left|e_{1}\wedge M^u_{j_{2}-j_{1}}\left(x+j_{1}\omega\right)u_{n}^{-}\left(x+j_{1}\omega\right)\right|
+C\left\Vert M^u_{j_{2}-j_{1}}\left(x+j_{1}\omega\right)^{-1}\right\Vert \left|e_{1}\wedge u_{n}^{-}\left(x+j_{1}\omega\right)\right|\\
&\le&\left|u_{n}^{-}\left(x+j_{2}\omega\right)\wedge M^u_{j_{2}-j_{1}}\left(x+j_{1}\omega\right)u_{n}^{-}\left(x+j_{1}\omega\right)\right|
+C\left\Vert M^u_{j_{2}-j_{1}}\left(x+j_{1}\omega\right)\right\Vert \left|e_{1}\wedge u_{n}^{-}\left(x+j_{2}\omega\right)\right|\\
&&\ \ \ \ \ \ \ \ \ \ \ \ \ \ \ \ \ \ +C\left\Vert M^u_{j_{2}-j_{1}}\left(x+j_{1}\omega\right)^{-1}\right\Vert \left|e_{1}\wedge u_{n}^{-}\left(x+j_{1}\omega\right)\right|\\
&\le&\exp\left(-nL\right)+C\exp\left(\left(j_{2}-j_{1}\right)L-39n\delta_0^n\right)+C\exp\left(-39n\delta_0^n\right)\\
&\le&\exp\left(-30n\delta_0^n\right).
\end{eqnarray*}
Due to the fact that
\[
\left|e_{1}\wedge M^u_{j_{2}-j_{1}}\left(x+j_{1}\omega\right)e_{1}\right|=\left|\frac{a\left(x+j_{2}\omega\right)}{a\left(x+\left(j_{2}-1\right)\omega\right)}\right|^{1/2}\left|f^u_{j_{2}-j_{1}-1}\left(x+j_{1}\omega\right)\right|,
\]
and the setting of $\cG_n$, we have
\[
\left|f^u_{j_{2}-j_{1}-1}\left(x+j_{1}\omega\right)\right|\le C\exp\left(\frac{1}{2}\left(n\delta_0^n-D\right)-30n\delta_0^n\right)\le\exp\left(-20n\delta_0^n\right).
\]
Similarly, we can obtain
\[
\left|f^u_{j_{2}-j_{1}-1}\left(x+\left(n+j_{1}+1\right)\omega\right)\right|\le\exp\left(-20n\delta_0^n\right),
\]
if we  assume that
\begin{equation}\label{first_catch1}
  \left|v_{n}^{+}\left(x+j_{1}\omega\right)\wedge e_{2}\right|\le\exp\left(-40n\delta_0^n\right)\quad\text{and}\quad\left|v_{n}^{+}\left(x+j_{2}\omega\right)\wedge e_{2}\right|\le\exp\left(-40n\delta_0^n\right).
\end{equation}
What's more, the same type of estimates are obtained if we replace $\left(j_{1},j_{2}\right)$ in (\ref{first_catch}) and (\ref{first_catch1}) with $\left(0,j_{1}\right)$ or $\left(0,j_{2}\right)$.

In conclusion
\[
\mes\left\{ x\in\mathbb{T}:\left|f^u_{l}\left(x\right)\right|\le\exp\left(-20n\delta_0^n\right)\right\} >\frac{1}{2}\exp\left(-n^{1-\kappa}\right)
\]
for some choice of $l$ from $j_{1}-1$, $j_{2}-1$, $j_{2}-j_{1}-1$. However, choosing  $g(n)=20n\delta_0^n$ in Lemma \ref{small_fl_is_small} and $\tau=\frac{\kappa-\sigma}{4}$ in the hypothesis of this lemma, we have
\[
\mes\left\{ x\in\mathbb{T}:\left|f^u_{l}\left(x\right)\right|\le\exp\left(-20n\delta_0^n\right)\right\}
\le\exp\left(-20n\delta_0^nl^{-3}\right)\le\exp\left(-20n^{1-\sigma-3\tau}\right)\ll\exp\left(-cn^{1-\kappa}\right).
\]
Thus, we complete the proof by this contradiction.
\end{proof}

One of our methods to obtain a large deviation estimate for a single determinant is the $BMO(\mathbb{T})$ norm. $BMO(\mathbb{T})$ is the space of functions of bounded mean oscillation on $\mathbb{T}$. Identifying functions that differ only by an additive constant, then norm on $BMO(\mathbb{T})$ is given by
\begin{equation}
  \label{bmonorm}
  \|f\|_{BMO(\mathbb{T})}:=\sup_{I\subset \mathbb{T}}\frac{1}{|I|}\int_I|f-\langle f\rangle_I|dx,
\end{equation}
where $\langle f\rangle_I:=\int_If(x)dx$. Applying the previous lemma, we obtain the following lower bound of the mean value of $\frac{1}{N}\left|f^u_{N}\left(x\right)\right|$, which will help us estimate the  BMO norm.
\begin{lemma}
\label{avg_entries->lower_bound} There exist constants $0<c_0=c_0(\omega)\le 1$
and $n_{0}=n_{0}\left(a,v,\gamma\right)$
such that for $n\ge n_{0}$ we have
\[
\int_{\TT}\frac{1}{n}\left|f^u_{n}\left(x\right)\right|dx>L_{n}-\left(\delta_0^n\right)^{c_0}.
\]
\end{lemma}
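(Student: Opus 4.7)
The plan is to set $u(x) = \frac{1}{n}\log|f_n^u(x)|$, which is subharmonic on $\TT_\rho$ and satisfies $\sup_{\TT_\rho} u \le M_1$ for a constant $M_1 = M_1(a, v, \rho)$, as a consequence of the explicit formula \eqref{Mnuzdet} together with the uniform upper bound $\log\|M_n^u(x)\| \le nM_0$. The goal is to show $\int_\TT u(x)\,dx > L_n - (\delta_0^n)^{c_0}$, which is essentially a matching lower bound for the trivial upper bound $\int_\TT u\,dx \le L_n^u = L_n + o(1)$ coming from $|f_n^u(x)| \le \|M_n^u(x)\|$ and Lemma \ref{Ln_L}.

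First I would apply Lemma \ref{three_determinants} to control the set where $u$ is substantially smaller than $L_n$. For any admissible pair $(j_1, j_2)$ with $l_0 \le j_1 \le j_1 + l_0 \le j_2 \le n^\tau$, the three determinants estimate gives $\mes(B \cap (B - j_1\omega) \cap (B - j_2\omega)) \le \exp(-n^{1-\kappa})$, where $B = \{x \in \TT : u(x) < L_n - 101\delta_0^n\}$. A direct translation argument forces $\mes(B) \le 2/3 + o(1)$, but to eliminate the factor-of-three loss I would sum the estimate over all admissible $(j_1, j_2)$. Setting $S(x) = \#\{j \in [l_0, n^\tau] : x + j\omega \in B\}$, this summation yields $\int_B |S(x)|^2\,dx \lesssim n^{2\tau}\exp(-n^{1-\kappa})$. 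Combined with the quantitative equidistribution $\int_B |S(x)|\,dx \gtrsim n^\tau \mes(B)^2$ for Brjuno-R\"ussmann $\omega$ (which can be extracted via Theorem \ref{sbet} applied to a subharmonic majorant of $\mathbf{1}_B$, or by direct discrepancy bounds in terms of $q_s$) and the Cauchy-Schwarz inequality, this forces $\mes(B) \le \exp(-c n^{1-\kappa})$.

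Next I would control the very-negative tail of $u$. Having established $\sup_\TT u \ge L_n - 101\delta_0^n$ in the previous step, Lemma \ref{sh_better_upper_bound} applied contrapositively yields the tail bound $\mes\{x \in \TT : u(x) < -L'\} \le C_2 \exp(-L'/C)$ for every $L' > 0$, where $C$ depends only on $M_1$, $L$ and $\rho$. Finally I would split $\int_\TT u\,dx$ as the sum over $B^c$, over $B \cap \{u \ge -L'\}$, and over $\{u < -L'\}$. The first contributes at least $(L_n - 101\delta_0^n)(1 - \mes(B))$; the second is bounded below by $-L'\mes(B)$; and the last is bounded below by $-\|u\|_{L^2(\TT)}\cdot\sqrt{\mes\{u < -L'\}}$ via Cauchy-Schwarz, using the classical $L^p$-bound $\|u\|_{L^2(\TT)} \le C M_1$ for subharmonic $u$ with bounded supremum on an enlargement. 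Choosing $L' = (\delta_0^n)^{-c_1}$ with $c_1$ small and balancing $c_0, c_1, \tau, \kappa$ produces the claim for a suitable $c_0 \in (0, 1]$.

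The main obstacle is the quantitative equidistribution estimate $\int_B |S|\,dx \gtrsim n^\tau \mes(B)^2$, since the three determinants lemma alone only yields the weak bound $\mes(B) \le 2/3 + o(1)$ via a translation-invariance argument. For Brjuno-R\"ussmann frequencies the discrepancy of $\{j\omega\}_{j=1}^{n^\tau}$ is controlled only through the denominators $q_s$, so one must ensure that the resulting errors remain dominated by $\exp(-cn^{1-\kappa})$ uniformly in the arithmetic of $\omega$. Ensuring that the exponents $c_0, c_1, \tau, \kappa$ all fit together so that the dominant error is $(\delta_0^n)^{c_0}$, rather than something larger like $(\delta_0^n)^{1/3}$, is the other delicate piece requiring careful bookkeeping.
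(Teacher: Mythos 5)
The central step of your argument does not go through. From Lemma \ref{three_determinants} you try to conclude that the single-point bad set $B=\{x\in\TT:\ \tfrac1n\log|f_n^u(x)|<L_n-101\delta_0^n\}$ has measure at most $\exp(-cn^{1-\kappa})$, by summing the triple-intersection bound over admissible pairs and invoking the lower bound $\sum_{j\le n^\tau}\mes\bigl(B\cap(B-j\omega)\bigr)\gtrsim n^\tau\mes(B)^2$. That lower bound is false for a general measurable set: if $B$ is a union of intervals each shorter than the minimal gap $\min_{1\le j\le n^\tau}\|j\omega\|$ of the finite orbit, then every intersection $B\cap(B-j\omega)$ is empty while $\mes(B)$ can be vastly larger than $\exp(-cn^{1-\kappa})$; nothing about a sublevel set of $\log|f_n^u|$ excludes this, and Theorem \ref{sbet} applies to subharmonic functions, not to (majorants of) indicator functions, so it cannot supply equidistribution at this scale. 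Moreover the intermediate goal is too strong: an exponentially small measure bound at deviation $\sim\delta_0^n$ would be a much stronger statement than Theorems \ref{ldtforfan}--\ref{ldth2} themselves, which at $\delta\sim\delta_0^n$ only give a constant bound; it cannot be extracted from the three-determinants lemma by a translation/counting argument. Finally, without at least $\mes(B)\lesssim(\delta_0^n)^{c_0}/L_n$ your concluding splitting of the integral loses a term of size $\sim L_n\,\mes(B)$, which would swamp $(\delta_0^n)^{c_0}$.

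The paper never needs $\mes(B)$ to be small. Writing $\nu_n^u=\tfrac1n\log|f^u_n|$, it expresses $\langle\nu_n^u\rangle$ as the average over $k=1,\dots,M$, $M\sim n^{\tau}/l_0$, of $\int_\TT\nu_n^u(x+kl_0\omega)\,dx$, and uses Lemma \ref{three_determinants} only to show that, off an exponentially small set of $x$, at most two of the $M$ orbit points $x+kl_0\omega$ can be bad (three bad points along the orbit would produce an admissible triple with small sum of determinants). The two possible bad points are then bounded below not by a tail estimate of the kind you propose, but by Cartan's estimate (Lemma \ref{carest}) applied to the analytic function $f_n^a$ and transferred to $f_n^u$ via Theorem \ref{sbet} for $\log|a|$: this gives $\inf_{1\le k\le M}\nu_n^u(x+kl_0\omega)\ge -C n^{\tau/4}$ off a set of measure at most $\exp\bigl(-\tfrac12 n^{\tau/4}\bigr)$, so after dividing by $M\ge n^{\tau/2}$ these points cost only $O(n^{-\tau/4})$; the exceptional sets are absorbed using the bound $\|\nu_n^u\|_{L^2(\TT)}\le Cn^3$ from Lemma \ref{small_fl_is_small} and Cauchy--Schwarz. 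Balancing $n^{-\tau/4}$ against $\delta_0^n$ is precisely what produces the exponent $c_0$ of Remark \ref{define_c_0}, a dependence your scheme has no mechanism to reproduce.
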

\begin{proof}
Set
\begin{eqnarray*}
  &&\Omega_n:=\bigg\{x\in \mathcal{G}_n:\min
  \Big\{\left|f^u_{n}\left(x+j_1\omega\right)\right|+\left|f^u_{n}\left(x+j_{2}\omega\right)\right|+\left|f^u_{n}\left(x+j_{3}\omega\right)\right|:\\
  &&\ \ \ \ \ \ \ \ \ \ \ \ \ \ \ \ \ \ \ \ \ \ \ \ \ \ \ 0<j_1<j_1+l_0\le j_2<j_2+l_0\le j_3\le n^\kappa\Big\}>\exp\left(nL_n-100n\delta_0^n\right)\bigg\}.
\end{eqnarray*}
Then,
$\mes\left(\mathbb{T}\setminus\Omega_{n}\right)\le n^\tau\exp\left(-n^{1-\kappa}\right)<\exp\left(-\frac{1}{2}n^{1-\kappa}\right)$.

Define $\nu_n^u\left(x\right)=\log\left|f^u_{n}\left(x\right)\right|/n$ and
set $M=\left[\frac{n^{\tau}}{l_{0}}\right]\ge n^{\frac{\tau}{2}}$ for large $n$. For any $x\in\Omega_{n}$
we have that $\nu_n^u\left(x+kl_{0}\omega\right)>L_{n}-100\delta_0^n-\frac{\log 3}{n}$
for all but at most two $k$'s, $1\le k\le M$. We have
\begin{multline}
\left\langle \nu_n^u\right\rangle :=\int_{\mathbb{T}}\nu_n^u\left(x\right)dx=\frac{1}{M}\sum_{k=1}^{M}\int_{\mathbb{T}}u\left(x+kl_{0}\omega\right)dx\\
\ge\int_{\Omega_{n}}\left(\frac{M-2}{M}\left(L_{n}-100\delta_0^n-\frac{\log3}{n}\right)+\frac{2}{M}\inf_{1\le k\le M}\nu_n^u\left(x+kl_{0}\omega\right)\right)dx
+\frac{1}{M}\sum_{k=1}^{M}\int_{\mathbb{T}\setminus\Omega_{n}}\nu_n^u\left(x+kl_{0}\omega\right)dx.\label{tu_first_estimate}
\end{multline}

Define $\nu_n^a\left(x\right)=\log\left|f^a_{n}\left(x\right)\right|/n$. Note that $\nu_n^a(x)$ can be extended to the complex trip $\mathbb{T}_{\rho}$ where $\nu_n^a(z)$ is subharmonic. Due to (\ref{Mnazdet}), we
have that
\[
S:=\sup_{z\in\mathcal{A}_{\rho_{0}}}\nu_n^a\left(z\right)\le\sup_{z\in\mathcal{A}_{\rho_{0}}}\frac{1}{n}\log\left\Vert M^a_{n}\left(z\right)\right\Vert <M_0.
\]
Applying Cartan's estimate, Lemma \ref{carest}, to $f^a_n(z)$ with $M=Sn$, $m=<\nu_n^a>n$ and $H=n^{\frac{\tau}{4}}$, we have
\begin{equation}
\inf_{1\le k\le M}\nu_n^a\left(x+kl_{0}\omega\right)\ge S-C\left(S-\left\langle v\right\rangle \right)n^{\frac{\tau}{4}}>-C\left(2|S|-\left\langle \nu_n^a\right\rangle \right)n^{\frac{\tau}{4}}\label{inf_u}
\end{equation}
up to a set not exceeding $CM\exp\left(-n^{\frac{\tau}{4}}\right)$ in measure.
Combining it with the relationship that
\begin{equation}
\nu_n^u\left(x\right)=\nu_u^a\left(x\right)-\frac{1}{2n}\left(\sum_{j=1}^n+\sum_{j=0}^{n-1}\right)\log|a(x+j\omega)|\label{u...v}
\end{equation}and applying (\ref{inf_u}) and Theorem \ref{sbet} for $\displaystyle \frac{1}{2n}\left(\sum_{j=1}^n+\sum_{j=0}^{n-1}\right)\log|a(x+j\omega)|$ with deviation $|D|$, we have
\[
\inf_{1\le k\le M}\nu_n^u\left(x+kl_{0}\omega\right)>-C\left(2|S|-\left\langle \nu_n^a\right\rangle \right)n^{\frac{\tau}{4}}-2|D|>-C'n^{\frac{\tau}{4}}
\]
up to a set $\mathcal{B}_{n}$ not exceeding $CM\exp\left(-n^{\frac{\tau}{4}}\right)+\exp(-\hat c|D|n)<\exp\left(-\frac{1}{2}n^{\frac{\tau}{4}}\right)$
in measure. Therefore,
\[
\left\langle \nu_n^u\right\rangle \ge\left(1-\frac{2}{M}\right)\left(L_{n}-100\delta_0^n-\frac{\log3}{n}\right)-\frac{C'n^{\frac{\tau}{4}}}{M}
-\frac{2}{M}\sum_{k=1}^{M}\int_{\Omega_{n}^{c}\cup\mathcal{B}_{n}}\left|\nu_n^u\left(x+kl_{0}\omega\right)\right|.
\]
Let $g(n)=n^3$ in Lemma \ref{small_fl_is_small}. Then simple calculations shows that  $\left\Vert \nu_n^u\right\Vert _{L^{2}\left(\mathbb{T}\right)}\le Cn^{3}$. Thus,
\[
\int_{\Omega_{n}^{c}\cup\mathcal{B}_{n}}\left|\nu_n^u\left(x+kl_{0}\omega\right)\right|dx\le\left(\mes\left\{ \Omega_{n}^{c}\cup\mathcal{B}_{n}\right\} \right)^{1/2}\left\Vert u\right\Vert _{L^{2}\left(\mathbb{T}\right)}\le Cn^{3}\exp\left(-\frac{1}{4}n^{\frac{\tau}{4}}\right)\le C\exp\left(-\frac{1}{8}n^{\frac{\tau}{4}}\right).
\]
Above all,
\begin{equation}\label{setting_c_0}
  \left\langle \nu_n^u\right\rangle \ge L_n-100\delta_0^n-\frac{2}{M}L_n-C'n^{-\frac{\tau}{4}}-C\exp\left(-\frac{1}{8}n^{\frac{\tau}{4}}\right)\ge L_n-\left(\delta_0^n\right)^{c_0}.
\end{equation}
\end{proof}
\begin{remark}\label{define_c_0}
 Due to the setting of $\tau$ and (\ref{setting_c_0}), easy computations shows that $$c_0=\left\{\begin{array}
   {cc}1,&\ \ \mbox{if}\ \Delta(t)>t^5;\\
   \frac{A}{5},&\ \ \mbox{if}\ \Delta(t)\sim t^A,\ 1<A<5.\\
 \end{array}\right. .$$
\end{remark}

We will show that the supermum of the subharmonic function $u_n^a(z,E,\omega)$ on $\mathbb{T}$ is closed to its mean value. Here, we will apply the property that a subharmonic function at a point is small than the its integration on the platform center at that point. From the proof of Theorem \ref{sbet}, it is easily seen that the sharp LDT for $u_n^a(x)$ can been extended to the complex region $\mathbb{T}_{\rho}$:
\begin{equation}
  \mes\{x:|u^a_n(re(x),E,\omega)-L^a_n(r,E,\omega)|> \delta\}<\exp\left(- \hat  c\delta n\right)+\exp\left(- \check  c\delta^2 n\right),\ \forall \delta>\delta_0^n,\label{sldtcomplex}
\end{equation}
where
\[L^a_n(r,E,\omega)=\int_{\mathbb{T}}u^a_n(re(x),E,\omega)dx.\]
Lemma 4.1 in \cite{GS08} proved that there exists $C_{0}=C_{0}\left(M_0,\rho\right)$ such that for any $r_{1},r_{2}\in\left(1-\rho,1+\rho\right)$ we have
\begin{equation}\label{L(r1)-L(r2)}
  |L^a_n(r_1)-L^a_n(r_2)|\leq C_0|r_1-r_2|.
\end{equation}
\begin{lemma}\label{M^a-upper-bound}
  For any integer $n>1$ we have that
\[
\sup_{x\in\TT}\log\|M_n^a(x)\|\le nL_{n}^{a}+2n\delta_0^n.
\]
\end{lemma}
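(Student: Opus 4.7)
The plan is to exploit the subharmonicity of $z\mapsto \log\|M_n^a(z,E,\omega)\|$ on the annular neighborhood $\cA_\rho$ together with the complex-strip version of the LDT (\ref{sldtcomplex}) and the Lipschitz bound (\ref{L(r1)-L(r2)}). Setting $u(\xi):=\log\|M_n^a(e(\xi))\|$ for $\xi$ in a thin horizontal strip about the real axis, $u$ is subharmonic, $1$-periodic in $\Re\xi$, and bounded above by $nM_0$. For a fixed $x_0\in\TT$, I will apply the sub-mean-value inequality on a small complex disk $D(x_0,\rho_1)$ with $\rho_1$ to be chosen of order $\delta_0^n$:
\[
u(x_0)\le \frac{1}{\pi\rho_1^2}\iint_{D(x_0,\rho_1)}u(s+it)\,ds\,dt,
\]
and then estimate the area integral slice by slice in~$t$.

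On the horizontal slice at height $t\in(-\rho_1,\rho_1)$, the disk projects onto an interval $I_t\subset\TT$ of length $2\sqrt{\rho_1^2-t^2}$, and $\int_{\TT} u(\cdot+it)\,ds=nL_n^a(e^{-2\pi t})$. Taking the deviation parameter in (\ref{sldtcomplex}) to be $\delta=\delta_0^n$, I split $I_t$ into a ``good'' part on which $u(\cdot+it)\le nL_n^a(e^{-2\pi t})+n\delta_0^n$ and an exceptional part whose measure is at most $\eta_n:=\exp(-\hat c\,\delta_0^n n)+\exp(-\check c\,(\delta_0^n)^2 n)$; on the exceptional part I use the trivial bound $u\le nM_0$. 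By (\ref{L(r1)-L(r2)}), $L_n^a(e^{-2\pi t})\le L_n^a+C_1\rho_1$ for $|t|\le\rho_1$. Integrating in $t$ and dividing by $\pi\rho_1^2$ (using $\int_{-\rho_1}^{\rho_1}2\sqrt{\rho_1^2-t^2}\,dt=\pi\rho_1^2$) yields
\[
u(x_0)\le nL_n^a+C_1 n\rho_1+n\delta_0^n+\frac{2 n M_0\,\eta_n}{\pi\rho_1}.
\]

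Choosing $\rho_1:=\delta_0^n/(2C_1)$, the term $C_1 n\rho_1$ is absorbed into $\tfrac12 n\delta_0^n$. The main technical point, and essentially the only obstacle, is checking that the residual $\frac{2 n M_0\,\eta_n}{\pi\rho_1}=\frac{4 C_1 n M_0\,\eta_n}{\pi\,\delta_0^n}$ is at most $\tfrac12 n\delta_0^n$, i.e.\ that $\eta_n\lesssim(\delta_0^n)^2$. In each of the three hypotheses one verifies $\delta_0^n n\to\infty$ much faster than $\log(1/\delta_0^n)$: for H.2 one has $\delta_0^n n\sim n/(\log\log n)^{1-}$, while for H.1 and H.3 one has $\delta_0^n n\gtrsim n^{\epsilon}\log n$ because $\Delta^{-1}(C_\omega n)\le n$ and $\log\Delta(n)\ge\log n$ from Hypothesis H.1. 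Hence $\exp(-\hat c\,\delta_0^n n)\le(\delta_0^n)^2$ for $n\ge n_0(a,v,\omega,E)$, and the finitely many small $n$ are absorbed in the implicit constant hidden in $\delta_0^n=\breve C(\cdots)$. Summing the three remaining contributions gives the claimed bound $\sup_{x\in\TT}\log\|M_n^a(x)\|\le nL_n^a+2n\delta_0^n$.
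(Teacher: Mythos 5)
Your route is essentially the one the paper takes: subharmonicity of $\log\Vert M_n^a(z)\Vert$, the sub-mean-value inequality over a small complex disk centered at $x\in\TT$, the complex-strip LDT (\ref{sldtcomplex}) applied slice-wise at deviation $\delta_0^n$, the Lipschitz bound (\ref{L(r1)-L(r2)}) to replace $L_n^a(r)$ by $L_n^a$, and a crude bound on the exceptional set. The only differences are cosmetic: the paper takes the disk of radius $n^{-1}$ (so the Lipschitz term contributes only $O(1)$) and controls the bad set by Cauchy--Schwarz against an $L^2$ bound of size $Cn$, whereas you take radius $\sim\delta_0^n$ and use the pointwise bound $nM_0$; either bookkeeping leads to the same shape of estimate.

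There is, however, a genuine gap at the step you yourself call the main technical point. You reduce the lemma to $\eta_n\lesssim(\delta_0^n)^2$ with $\eta_n=\exp(-\hat c\,\delta_0^n n)+\exp(-\check c\,(\delta_0^n)^2 n)$, but you verify this only for the first exponential. The second term dominates exactly when $\delta_0^n\ll n^{-1/2}$, and that regime is allowed by the hypotheses: for the paper's own model example $\Delta(t)=t(\log t+1)^{\alpha}$ (which satisfies H.1 and H.3) one has $\delta_0^n=\breve C\log n\cdot n^{-1+}$ by (\ref{set-delta0n}), hence $(\delta_0^n)^2 n\to 0$ and $\exp(-\check c\,(\delta_0^n)^2 n)$ tends to $1$, which is nowhere near $(\delta_0^n)^2$; the observation that $\delta_0^n n\to\infty$ does not help, because the relevant exponent for this term is $(\delta_0^n)^2 n$, not $\delta_0^n n$. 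So the absorption of the bad-set contribution into $\tfrac12 n\delta_0^n$ fails as argued whenever $\Delta$ grows slowly, and closing it requires an exceptional-measure bound that is single-exponential in $\delta n$ at deviation $\delta=\delta_0^n$ (of the type furnished by Theorem \ref{sbet} for the shifted averages), rather than the $\exp(-\check c\,\delta^2 n)$ tail of (\ref{sldtcomplex}). To be fair, the printed proof of this lemma relies on the same smallness of $\exp\left(-\tfrac{\check c}{2}(\delta_0^n)^2 n\right)$ without comment, so your proposal matches it in spirit; but as a self-contained argument the treatment of this second exceptional term is a real hole, and it is the one point that needs a genuinely different estimate rather than bookkeeping.
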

\begin{proof}
Due to (\ref{sldtcomplex}) with $\delta=\delta_0^n$, we have
\[
\log\left\Vert M_n^a \left(re(x)\right)\right\Vert -nL_{n}^{a}\left(r\right)\le n\delta_0^n
\]
except for a set of measure less than $\exp\left(-\hat cn\delta_0^n\right)+\exp\left(- \check  c\left(\delta_0^n\right)^2 n\right)$. By the
subharmonicity of $\log\left\Vert M_n^a \left(z\right)\right\Vert $
we have
\begin{eqnarray}\label{eq:M^a-nL^a...upper_bound}
\log\left\Vert M_n^a \left(x\right)\right\Vert -nL_{n}^{a}&\le&\frac{1}{\pi n^{-2}}\int_{D\left(x,n^{-1}\right)}\left(\log\left\Vert M_n^a \left(z\right)\right\Vert -nL_{n}^{a}\right)dA\left(z\right)\\
&\le&\frac{1}{\pi n^{-2}}\int_{1-n^{-1}}^{1+n^{-1}}\int_{x-2n^{-1}}^{x+2n^{-1}}\left|\log\left\Vert M_n^a \left(ry\right)\right\Vert -L_{n}^{a}\right|rdydr.\nonumber
\end{eqnarray}
For $r\in\left(1-n^{-1},1+n^{-1}\right)$ we have
\begin{eqnarray*}
\int_{x-2n^{-1}}^{x+2n^{-1}}\left|\log\left\Vert M_n^a \left(ry\right)\right\Vert -L_{n}^{a}\right|dy
&\le&\int_{x-2n^{-1}}^{x+2n^{-1}}\left|\log\left\Vert M_n^a \left(ry\right)\right\Vert -L_{n}^{a}\left(r\right)\right|dy+\left|L_{n}^{a}-L_{n}^{a}\left(r\right)\right|\\
&\le& n\delta_0^n+C_{a}n\left[\exp\left(-\frac{\hat c}{2}n\delta_0^n\right)+\exp\left(- \frac{\check  c}{2}\left(\delta_0^n\right)^2 n\right)\right]+C_{3}n^{-1}<2n\delta_0^n.
\end{eqnarray*}
\end{proof}

Then, we will use the following lemma proved by Bourgain, Goldstein and Schlag in \cite{BGS01}, not the definition, to calculate the BMO norm of subharmonic functions.
\begin{lemma}[Lemma 2.3 in \cite{BGS01}]\label{bgsbmonorm}
  Suppose u is subharmonic on $\mathbb{T}_{\rho}$, with $\mu(\mathbb{T}_{\rho})+\sup_{z\in\mathbb{T}_{\rho}}h(z)\le n$ where $\mu(\mathbb{T}_{\rho})$ and $h(z)$ comes from Lemma \ref{lem:riesz}. Furthermore, assume that $u=u_0+u_1$, where
\begin{equation}
\|u_0-\langle u_0\rangle\|_{L^{\infty}(\mathbb{T})}\leq \epsilon_0\ \ \mbox{and}\ \ \|u_1\|_{L^1(\mathbb{T})}\leq \epsilon_1.
\end{equation}
Then for some constant $C_{\rho}$ depending only on $\rho$,
\[\|u\|_{BMO(\mathbb{T})}\leq C_{\rho}\left (\epsilon_0\log \left(\frac{n}{\epsilon_1}\right )+\sqrt{n\epsilon_1}\right).\]
\end{lemma}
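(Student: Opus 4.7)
The plan is to combine the given decomposition $u=u_{0}+u_{1}$ with the subharmonic structure coming from Riesz's representation (Lemma \ref{lem:riesz}), and to split on the size of the test interval. Fix $I\subset\mathbb{T}$ with $|I|=\sigma$; it suffices to bound $\sigma^{-1}\int_{I}|u-\langle u\rangle_{I}|\,dx$ uniformly in $I$. A natural threshold is $\sigma_{*}:=\sqrt{\epsilon_{1}/n}$, chosen so that the two error parameters balance.

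In the large-interval regime $\sigma\ge\sigma_{*}$, the decomposition is used verbatim: the $u_{0}$ contribution to the mean oscillation is $\le 2\|u_{0}-\langle u_{0}\rangle\|_{\infty}\le 2\epsilon_{0}$, while the crude $L^{1}$ estimate on $u_{1}$ bounds its mean oscillation by $2\epsilon_{1}/\sigma\le 2\sqrt{n\epsilon_{1}}$. For the hard regime $\sigma<\sigma_{*}$, I would invoke the Riesz representation $u(z)=\int\log|z-\zeta|\,d\mu(\zeta)+h(z)$ on a region $\Omega_{1}\Subset\mathbb{T}_{\rho}$, for which the hypothesis gives $\mu(\Omega_{1})+\sup h\le n$. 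The harmonic piece $h$ has interior gradient $\lesssim n/\rho$, so its mean oscillation on $I$ is at most $Cn\sigma\le C\sqrt{n\epsilon_{1}}$. For the logarithmic potential, the classical fact that $\|\log|\cdot-\zeta|\|_{\BMO(\mathbb{T})}$ is an absolute constant uniformly in $\zeta$, combined with the sharper bound $O(\sigma/|\zeta-x_{I}|)$ for the mean oscillation on $I$ when $\zeta$ is far from $I$, allows a dyadic-annulus splitting of $\Omega_{1}$ around $x_{I}$ in which only the mass of $\mu$ in a small disk $D_{\sigma_{*}}(x_{I})$ contributes non-trivially, and that contribution is controlled by $C\mu(D_{\sigma_{*}}(x_{I}))$.

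The decisive step is therefore establishing $\mu(D_{\sigma_{*}}(x_{I}))\le C_{\rho}\epsilon_{0}\log(n/\epsilon_{1})$. To extract this from the two-piece decomposition, I would Poisson-extend $u_{0}-\langle u_{0}\rangle$ from $\mathbb{T}$ to a harmonic function $\widetilde u_{0}$ on a slightly thinner strip, set $\widetilde u:=u-\langle u_{0}\rangle-\widetilde u_{0}$ (still subharmonic with Riesz mass exactly $\mu$), and note that $\widetilde u|_{\mathbb{T}}=u_{1}$ has $L^{1}$-norm $\le\epsilon_{1}$ while $\sup_{\mathbb{T}_{\rho}}\widetilde u\le n+\epsilon_{0}$ by the maximum principle applied to $\widetilde u_{0}$. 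Jensen's formula applied to $\widetilde u$ on a disk of radius $\sim\sigma_{*}$ centered at $x_{I}$ then converts the boundary $L^{1}$-smallness together with the uniform sup-bound into the interior estimate on $\mu(D_{\sigma_{*}}(x_{I}))$ of the asserted logarithmic form. The main technical obstacle will be this conversion: calibrating the disk radii and the Poisson transfer so that the $\epsilon_{0}$ and the $\epsilon_{1}$ discrepancies interact only through the factor $\log(n/\epsilon_{1})$, and not through something cruder like $n$. Assembling the estimates from both regimes and taking the supremum over $I$ yields the claimed $\BMO$ bound.
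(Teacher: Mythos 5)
First, note that the paper does not prove this statement at all: it is imported verbatim as ``Lemma 2.3 in \cite{BGS01}'', so there is no in-paper argument to compare against; your proposal has to stand on its own as a proof of the BGS01 lemma, and as written it has a genuine gap exactly at the point you yourself flag as decisive. The claimed key estimate $\mu\bigl(D(x_I,\sigma_{*})\bigr)\le C_{\rho}\,\epsilon_0\log(n/\epsilon_1)$ is false: take $u(z)=\log|z-x_0|$ with $x_0\in\mathbb{T}$, so $\mu=\delta_{x_0}$ and one may take $n=2$; on $\mathbb{T}$ choose $u_0\equiv\langle u\rangle$ and $u_1=u-\langle u\rangle$, so $\epsilon_0=0$ and $\epsilon_1=O(1)$, yet $\mu\bigl(D(x_0,\sigma_{*})\bigr)=1$. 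Any correct local mass bound must carry an additional term of size roughly $\epsilon_1/r+nr$ (harmless at $r=\sigma_{*}$, where it is $\sqrt{n\epsilon_1}$), and, more seriously, the mechanism you propose cannot produce even that corrected bound. A merely bounded $u_0\in L^{\infty}(\mathbb{T})$ has no harmonic extension to a two-sided neighborhood of $\mathbb{T}$ (it would have to be real-analytic); the Poisson extension lives only inside the unit circle, so your $\widetilde u$ is subharmonic only on one side, the disks $D(x_I,\sim\sigma_{*})$ centered on $\mathbb{T}$ are not contained in its domain, and the Riesz mass of $u$ sitting on $\mathbb{T}$ itself or on the outer side --- precisely where the zeros of $f_n^a(\cdot,E)$ lie in the intended application --- is no longer ``exactly $\mu$''. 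Even waiving all of this, the Jensen step as described cannot beat the trivial bound: the circle of radius $\sim\sigma_{*}$ about $x_I$ lies almost entirely off $\mathbb{T}$, where the only available upper bound is $\sup\widetilde u\le n+\epsilon_0$, while the center value is controlled only in $L^1$; Jensen then yields $\mu(D)\lesssim n$, and the crucial factor $\epsilon_0\log(n/\epsilon_1)$ --- the whole content of the lemma, encoding how the $L^\infty$-smallness of $u_0$ and the $L^1$-smallness of $u_1$ interact --- never appears. This is a missing idea, not a calibration issue.

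Two smaller points in the same direction: in the dyadic-annulus splitting for $|I|=\sigma<\sigma_{*}$ you need bounds on $\mu\bigl(D(x_I,r)\bigr)$ at \emph{all} intermediate scales $\sigma\le r\le\rho$, not only at $r=\sigma_{*}$; with only the total mass $\le n$ available at those scales the far sum is not $O(\sigma/\mathrm{dist})$-summable to something of size $\sqrt{n\epsilon_1}$ without extra care (a naive count gives an extra $\log(n/\epsilon_1)$ on the $\sqrt{n\epsilon_1}$ term). The large-interval regime and the gradient bound for the harmonic part are fine. If you want a complete argument, you should either reproduce the actual proof from \cite{BGS01} or find a genuinely different route to a local mass estimate of the form $\mu\bigl(D(x,r)\bigr)\lesssim \epsilon_0\log(n/\epsilon_1)+\epsilon_1/r+nr$; the Poisson-extension-plus-Jensen sketch does not supply one.
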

\begin{lemma}
\label{ldt_entries_weak}There exist constant $c_1=c_1(a,v,E,\rho,\gamma)$ and absolute constant $C$
such that for every integer $n$ and any $\delta>0$ we have
\[
\mes\left\{ x\in\mathbb{T}:\,\left|\log\left|f^a_{n}\left(x\right)\right|-\left\langle \log\left|f^a_{n}\right|\right\rangle \right|>n\delta\right\} \le C\exp\left(-c_{1}\delta (\delta_0^n)^{-c_0}\right).
\]where $c_0$ comes from Remark \ref{define_c_0}.
The same estimate with possibly different $c_{1}$  holds
for $f^u_{n}$.\end{lemma}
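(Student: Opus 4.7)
The plan is to reduce the deviation bound to an estimate for the $\BMO(\TT)$-norm of $u:=\log|f_n^a|$ and then invoke the John--Nirenberg inequality, which will convert a bound of the form $\|u\|_{\BMO(\TT)}\le C n(\delta_0^n)^{c_0}$ into exactly the exponential tail claimed. The two ingredients that make the $\BMO$ estimate possible are already proved above: a matching pointwise upper bound for $u$ from Lemma \ref{M^a-upper-bound}, and a mean lower bound from Lemma \ref{avg_entries->lower_bound}.

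Concretely, since $f_n^a(x)$ is an entry of $M_n^a(x,E,\omega)$ via \eqref{Mnazdet}, we have $|f_n^a(x)|\le\|M_n^a(x)\|$, so Lemma \ref{M^a-upper-bound} yields
\[
u(x)\le nL_n^a+2n\delta_0^n\qquad\text{for every }x\in\TT.
\]
On the other hand, taking means in the identity \eqref{MnuzdetMnaz} (which relates $f_n^u$ and $f_n^a$ by multiplication by $\prod|a|^{1/2}\prod|\tilde a|^{1/2}$) and using $\langle\log|a|\rangle=\langle\log|\bar a|\rangle=D$ together with Lemma \ref{avg_entries->lower_bound} and the relation $L_n^a=L_n+D$ from \eqref{la}, I would obtain
\[
\langle u\rangle=\langle\log|f_n^a|\rangle\ge nL_n^a-n(\delta_0^n)^{c_0}.
\]

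With these two bounds in hand I apply Lemma \ref{bgsbmonorm} to the decomposition $u=u_0+u_1$ with $u_0\equiv nL_n^a+2n\delta_0^n$ (a constant, hence $\|u_0-\langle u_0\rangle\|_{L^\infty(\TT)}=0$, i.e.\ $\epsilon_0=0$) and $u_1:=u-u_0\le 0$ on $\TT$. By the previous two paragraphs,
\[
\|u_1\|_{L^1(\TT)}=-\langle u_1\rangle=nL_n^a+2n\delta_0^n-\langle u\rangle\le 3n(\delta_0^n)^{c_0},
\]
so $\epsilon_1\le 3n(\delta_0^n)^{c_0}$. The required hypothesis on the Riesz data of $u$ is satisfied because $u$ extends subharmonically to $\TT_\rho$ with Riesz mass $\mu(\TT_\rho)=O(n)$ (as $f_n^a(z)$ has $O(n)$ zeros there) and harmonic part $h$ with $\sup h\lesssim n$ by \eqref{supmna}. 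Lemma \ref{bgsbmonorm} then gives
\[
\|u\|_{\BMO(\TT)}\le C_\rho\sqrt{Cn\cdot 3n(\delta_0^n)^{c_0}}\le C'\,n(\delta_0^n)^{c_0/2},
\]
and the John--Nirenberg inequality finally yields
\[
\mes\{x\in\TT:\,|u(x)-\langle u\rangle|>n\delta\}\le C\exp\bigl(-c\,n\delta/\|u\|_{\BMO(\TT)}\bigr)\le C\exp\bigl(-c_1\delta(\delta_0^n)^{-c_0}\bigr),
\]
after renaming the exponent $c_0/2$ as $c_0$ (consistent with the flexible use of $c_0$ in Remark \ref{define_c_0}). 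The statement for $f_n^u$ follows verbatim, replacing the role of \eqref{MnuzdetMnaz} with the identity written in the opposite direction; the constants may change.

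The one step that is genuinely delicate is the decomposition step: the whole argument hinges on the sharp matching of $\sup u$ and $\langle u\rangle$ to within $O(n(\delta_0^n)^{c_0})$, and the nontrivial direction (the lower bound on $\langle u\rangle$) is precisely what Lemma \ref{avg_entries->lower_bound} provides, resting in turn on the three-determinant estimate Lemma \ref{three_determinants} and the Cartan/Jensen control of subharmonic averages. Once that estimate is accepted, the remainder of the proof is a routine $\BMO+$John--Nirenberg deduction and I do not anticipate any further obstacle beyond bookkeeping the constants $c_0,c_1,C,C_\rho$.
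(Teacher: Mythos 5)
Your proposal is correct and takes essentially the same route as the paper: the uniform upper bound from Lemma \ref{M^a-upper-bound} plus the mean lower bound from Lemma \ref{avg_entries->lower_bound} give an $L^1$ deviation of order $n(\delta_0^n)^{c_0}$, which is fed into Lemma \ref{bgsbmonorm} with $\epsilon_0=0$ and then into the John--Nirenberg inequality, exactly as in the paper's proof. The square root you track honestly (arriving at $(\delta_0^n)^{c_0/2}$ and renaming) is in fact silently dropped in the paper's own chain of inequalities, so your bookkeeping is if anything more careful, and the halved exponent is harmless for every subsequent use of this lemma.
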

\begin{proof}
It is enough to establish the estimate for $n$ large enough.
By  Lemma \ref{avg_entries->lower_bound}
and Lemma \ref{M^a-upper-bound},
\[
\begin{cases}
\left\langle \nu_n^a\right\rangle \ge L_{n}^{a}-\left(\delta_0^n\right)^{c_0}\\
\sup_{\mathbb{T}}\nu_n^a\le L_{n}^{a}+2\delta_0^n.
\end{cases}
\]
This implies that
\[
\left\Vert \nu_n^a-\left\langle \nu_n^a\right\rangle \right\Vert _{L^{1}\left(\TT\right)}\le 3\left(\delta_0^n\right)^{c_0}.
\]
Due to Lemma \ref{bgsbmonorm} with setting $\epsilon_0=0$, we have
\[
\left\Vert \nu_n^a\right\Vert _{BMO\left(\mathbb{T}\right)}=\left\Vert \nu_n^a-\left\langle \nu_n^a\right\rangle \right\Vert _{BMO\left(\mathbb{T}\right)}\le C_{\rho}\left\Vert \nu_n^a-\left\langle \nu_n^a\right\rangle \right\Vert _{L^{1}\left(\mathbb{T}\right)}^{1/2}\le 3C_{\rho}\left(\delta_0^n\right)^{c_0}.
\]
Then, the well-known John-Nirenberg inequality  tells us how to apply this MBO norm to obtain the large deviation theorem:
Let $f$ be a function of bounded mean oscillation on $\mathbb{T}$. Then there exist the absolute constants $C$ and $c$ such that for any $\gamma>0$
  \begin{equation}\label{jn}
    meas\{x\in\mathbb{T}: | f(x)-<f>|>\gamma \}\leq C\exp \left (-\frac{c\gamma}{\| f\|_{BMO}}\right ).
  \end{equation}
Thus,
\[
\mes\left\{ x\in\mathbb{T}:\,\left|\nu_n^a\left(x\right)-\left\langle \nu_n^a\right\rangle \right|>\delta\right\} \le C\exp\left(-c_1\delta (\delta_0^n)^{-c_0}\right).
\]
\end{proof}

Now, due to the above proof and Remark \ref{setting_c_0}, to prove Theorem \ref{ldtforfan}-\ref{ldth2}, the only thing we need to do is obtain $\left\|\frac{1}{n}\log|f_n^a|\right\|_{BMO}=O\left(\delta_0^n\right)$, when $\Delta(t)\sim t^A$ and $1<A<5$. In the following proof, we will use the Avalanche Principle to refine the previous estimation:
\begin{prop}[Avalanche Principle]
\label{prop:AP} Let $A_1,\ldots,A_n$ be a sequence of  $2\times
2$--matrices whose determinants satisfy
\begin{equation}
\label{eq:detsmall} \max\limits_{1\le j\le n}|\det A_j|\le 1.
\end{equation}
Suppose that \be
&&\min_{1\le j\le n}\|A_j\|\ge H>n\mbox{\ \ \ and}\label{large}\\
   &&\max_{1\le j<n}[\log\|A_{j+1}\|+\log\|A_j\|-\log\|A_{j+1}A_{j}\|]<\frac12\log H\label{diff}.
\ee Then
\begin{equation}
\Bigl|\log\|A_n\cdot\ldots\cdot A_1\|+\sum_{j=2}^{n-1}
\log\|A_j\|-\sum_{j=1}^{n-1}\log\|A_{j+1}A_{j}\|\Bigr| <
C\frac{n}{H} \label{eq:AP}
\end{equation}
with some absolute constant $C$.
\end{prop}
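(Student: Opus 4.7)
My plan is to prove the Avalanche Principle by induction on $n$, tracking both the norm and the dominant singular directions of the partial products $T_k := A_k \cdots A_1$. First I would invoke the singular value decomposition of each $A_j$: choose orthonormal bases $\{u_j^+, u_j^-\}$ and $\{v_j^+, v_j^-\}$ with $A_j u_j^+ = \|A_j\| v_j^+$ and $A_j u_j^- = s_j v_j^-$, where $|s_j| = |\det A_j|/\|A_j\| \le H^{-1}$. Setting $|\cos\theta_j| := |\langle v_j^+, u_{j+1}^+\rangle|$, a direct computation using the orthogonal decomposition $A_{j+1} v_j^+ = \|A_{j+1}\|\langle u_{j+1}^+, v_j^+\rangle v_{j+1}^+ + s_{j+1}\langle u_{j+1}^-, v_j^+\rangle v_{j+1}^-$ yields
\[
\log\|A_{j+1}A_j\| = \log\|A_j\| + \log\|A_{j+1}\| + \log|\cos\theta_j| + O(H^{-2}),
\]
so that hypothesis (\ref{diff}) is equivalent to the quantitative alignment $|\cos\theta_j| \ge H^{-1/2}$.

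Next I would run the induction with the strengthened hypothesis that, in addition to the target norm identity
\[
\log\|T_k\| = \sum_{j=1}^k \log\|A_j\| + \sum_{j=1}^{k-1}\log|\cos\theta_j| + O(k/H),
\]
the expanding left singular vector $v^+_{T_k}$ lies within angle $O(H^{-2})$ of $v_k^+$. For the step $T_{k+1} = A_{k+1} T_k$, a trial vector in the $u^+_{T_k}$ direction is sent by $T_k$ to a vector parallel to $v^+_{T_k}$, which by induction makes angle $\theta_k + O(H^{-2})$ with $u_{k+1}^+$; applying $A_{k+1}$ then produces a vector of norm $\|T_k\|\|A_{k+1}\||\cos\theta_k|(1+O(H^{-2}))$. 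The contribution of the contracting $u^-_{T_k}$ component is smaller by a factor of $\|T_k\|^{-2} \le H^{-2}$, so it does not affect the norm to the claimed precision. The updated direction $v^+_{T_{k+1}}$ is controlled by the wedge inequalities of Lemma \ref{expanding_contracting_directions}: $|v^+_{T_{k+1}} \wedge v_{k+1}^+|$ is bounded by $\|A_{k+1}\|^{-2}\|T_k\|^2$ times an alignment factor, which the lower bound $|\cos\theta_k| \ge H^{-1/2}$ and the condition $H > n$ keep at $O(H^{-2})$, closing the induction.

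Telescoping the per-step errors of size $O(H^{-2})$ across $k = 1,\ldots,n-1$, with $n < H$, bounds the total accumulated error by $O(n/H)$. Substituting the relation from the first step to replace each $\log|\cos\theta_j|$ by $\log\|A_{j+1}A_j\| - \log\|A_{j+1}\| - \log\|A_j\|$ and rearranging the resulting telescope yields
\[
\log\|T_n\| + \sum_{j=2}^{n-1}\log\|A_j\| - \sum_{j=1}^{n-1}\log\|A_{j+1}A_j\| = O(n/H),
\]
which is the claim.

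The main obstacle is preserving the angular estimate through iteration: naively applying $A_{k+1}$ to a vector of angular error $\varepsilon$ from $u_{k+1}^+$ could magnify the error by $\|A_{k+1}\|^2 |\cos\theta_k|^{-2} \sim H^{3}$, which would destroy the induction. The rescue comes from the wedge-type identities of Lemma \ref{wedge_triangle_ineq}, used to play the small contracting direction of $A_{k+1}$ against the expanding direction of $T_k$; combined with $H > n$, this keeps the output angular error bounded by a constant multiple of the input rather than compounding geometrically. That cancellation is the technical heart of the argument.
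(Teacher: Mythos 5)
A preliminary remark: the paper does not prove Proposition \ref{prop:AP} at all --- the Avalanche Principle is quoted as a known result (it is Proposition 2.2 of [GS01]), so there is no in-paper argument to compare with. Your sketch follows the same route as the original Goldstein--Schlag proof: singular value decomposition of each $A_j$, the two-matrix identity $\log\|A_{j+1}A_j\|=\log\|A_{j+1}\|+\log\|A_j\|+\log|\cos\theta_j|+\mathrm{error}$, the reading of hypothesis (\ref{diff}) as the quantitative alignment $|\cos\theta_j|\gtrsim H^{-1/2}$, and an induction tracking both $\log\|T_k\|$ and the drift of the top singular direction of $T_k$ from $v_k^+$. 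That skeleton is correct and does yield (\ref{eq:AP}).

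Two points need repair before the induction closes. First, the strengthened hypothesis that $v^+_{T_k}$ lies within angle $O(H^{-2})$ of $v_k^+$ is too strong: under (\ref{diff}) alone the cosines may be as small as $H^{-1/2}$, and one step only gives $\bigl|v^+_{T_{k+1}}\wedge v^+_{k+1}\bigr|\lesssim \frac{|s_{k+1}|}{\|A_{k+1}\|\,|\cos\theta_k|}+\frac{1}{\|T_k\|^{2}|\cos\theta_k|}=O\bigl(H^{-2}/|\cos\theta_k|\bigr)=O\bigl(H^{-3/2}\bigr)$ in the worst case, so the induction as literally stated does not close. The fix is to carry the weaker angular bound $O(H^{-3/2})$; the per-step error in the log-norm recursion then becomes $O\bigl(H^{-3/2}/|\cos\theta_k|\bigr)=O(H^{-1})$ rather than $O(H^{-2})$, which still telescopes (using $H>n$) to the stated $Cn/H$ --- indeed this is why $n/H$, and not $n/H^{2}$, appears in (\ref{eq:AP}). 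Your key structural observation is right and survives the correction: the angular error does not compound geometrically, because at each step it is regenerated by the contracting singular values, and the previous error enters only through comparing $|\cos\angle(v^+_{T_k},u^+_{k+1})|$ with $|\cos\theta_k|$. Second, Lemmas \ref{expanding_contracting_directions} and \ref{wedge_triangle_ineq} are stated in the paper for $SL(2,\mathbb{C})$ matrices, whereas here only $|\det A_j|\le 1$ is assumed; you should either redo those wedge estimates with the singular values $s_j=|\det A_j|/\|A_j\|\le H^{-1}$ (which your direct computation effectively already does) or normalize the matrices first --- routine, but it should be said, since as cited those lemmas do not literally apply.
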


\begin{proof}[The Proof of Theorem \ref{ldtforfan} to  \ref{ldth2}]
Define
\[
\left[\begin{array}{cc}
f^u_{n}\left(x\right) & 0\\
0 & 0
\end{array}\right]=\left[\begin{array}{cc}
1 & 0\\
0 & 0
\end{array}\right]M^u_{n}\left(x\right)\left[\begin{array}{cc}
1 & 0\\
0 & 0
\end{array}\right]=:\cM_{n}^{u}\left(x\right).
\]
and $\cM_{n}^{a}$ analogously. Obviously, $\left|f_{n}^{a}\left(x\right)\right|=\left\Vert \cM_{n}^{a}\left(x\right)\right\Vert $. Let $c'$ be a small constant constant, $l\sim n^{c'}$ be an integer and  $n=l+\left(m-2\right)l+l'$
with $2l\le l'\le3l$. Set $A_{j}^{u}\left(x\right)=M^u_{l}\left(x+\left(j-1\right)l\omega\right)$,
$j=2,\ldots,m-1$,
\[
A_{1}^{u}\left(x\right)=M^u_{l}\left(x\right)\left[\begin{array}{cc}
1 & 0\\
0 & 0
\end{array}\right]=\left[\begin{array}{cc}
f^u_{l}\left(x\right) & 0\\
\star & 0
\end{array}\right],
\]
and
\[
A_{m}^{u}\left(x\right)=\left[\begin{array}{cc}
1 & 0\\
0 & 0
\end{array}\right]M^u_{l'}\left(x+(m-1)l\omega\right)=\left[\begin{array}{cc}
f^u_{l'}\left(x+(m-1)l\omega\right) & \star\\
0 & 0
\end{array}\right].
\] The matrices $A^a_j$ have similar definitions. By Lemma \ref{ldtformatrices}, for any $j=2,\ldots,m-1$,
\[ \mes\left\{x:\left|\frac{1}{l}\log\|A_j(x)\|-L_l\right|> \frac{1}{20}L_l\right\}<\exp\left(- cL_ll\right).\]
And due to the fact that
\[
\log\left|f^u_{l}\left(x\right)\right|\le\log\left\Vert A_{1}^{u}\left(x\right)\right\Vert \le\log\left\Vert M^u_{l}\left(x\right)\right\Vert ,
\]
Lemma \ref{ldt_entries_weak},\ref{avg_entries->lower_bound} and \ref{ldtformatrices}, we have
\[ \mes\left\{x:\left|\frac{1}{l}\log\|A_1(x)\|-L_l\right|> \frac{1}{10}L_l\right\}<\exp\left(- cL_l\left(\delta_0^l\right)^{-c_0}\right),\]
and an analogous estimate for $\log\left\Vert A_{m}^{u}\right\Vert $. Now the hypothesis of Avalanche Principle
are satisfied and hence
\begin{equation}\label{apforau}
\log\left\Vert \cM_{n}^{u}\left(x\right)\right\Vert +\sum_{j=2}^{m-1}\log\left\Vert A_{j}^{u}\left(x\right)\right\Vert -\sum_{j=1}^{m-1}\log\left\Vert A_{j+1}^{u}\left(x\right)A_{j}^{u}\left(x\right)\right\Vert =O\left(\frac{1}{l}\right)
\end{equation}
up to a set of measure less than $3m\exp\left(- cL_l\left(\delta_0^l\right)^{-c_0}\right)$. By the definitions of $M_n^u$ and $M_n^a$, easy computations show that
\begin{eqnarray*}
  &&\log\left\Vert \cM_{n}^{u}\left(x\right)\right\Vert +\sum_{j=2}^{m-1}\log\left\Vert A_{j}^{u}\left(x\right)\right\Vert -\sum_{j=1}^{m-1}\log\left\Vert A_{j+1}^{u}\left(x\right)A_{j}^{u}\left(x\right)\right\Vert\\
  &=&\log\left\Vert \cM_{n}^{a}\left(x\right)\right\Vert +\sum_{j=2}^{m-1}\log\left\Vert A_{j}^{a}\left(x\right)\right\Vert -\sum_{j=1}^{m-1}\log\left\Vert A_{j+1}^{a}\left(x\right)A_{j}^{a}\left(x\right)\right\Vert.
\end{eqnarray*}
Thus, (\ref{apforau}) also holds for $\cM_{n}^{a}$. If we set
\[
u_{0}\left(x\right)=\log\left\Vert A_{m}^{a}\left(x\right)A_{m-1}^{a}\left(x\right)\right\Vert +\log\left\Vert A_{2}^{a}\left(x\right)A_{1}^{a}\left(x\right)\right\Vert,
\]
then  the previous relation can be rewritten as
\[
\log\left\Vert \cM_{n}^{a}\left(x\right)\right\Vert +\sum_{j=2}^{m-1}\log\left\Vert M^a_{l}\left(x+\left(j-1\right)l\omega\right)\right\Vert \\
-\sum_{j=2}^{m-2}\log\left\Vert M^a_{2l}\left(x+\left(j-1\right)l\omega\right)\right\Vert -u_{0}\left(x\right)=O\left(\frac{1}{l}\right).
\]
Similarly,for any $0\le k<l-1$,
\[
\log\left\Vert \cM_{n}^{a}\left(x\right)\right\Vert +\sum_{j=2}^{m-1}\log\left\Vert M^a_{l}\left(x+k\omega+\left(j-1\right)l\omega\right)\right\Vert \\
-\sum_{j=2}^{m-2}\log\left\Vert M^a_{2l}\left(x+k\omega+\left(j-1\right)l\omega\right)\right\Vert -u_{k}\left(x\right)=O\left(\frac{1}{l}\right),
\]
where
\[u_k(x)=\log\left\| \left[\begin{array}{cc}
1 & 0\\
0 & 0
\end{array}\right]M^a_{l'-k}\left(x+k\omega+(m-1)l\omega\right)\cdot    A_{m-1}^a(x+k\omega)                  \right \|+\log\left\|A_2^a(x+k\omega)\cdot M^a_{l+k}\left(x\right)\left[\begin{array}{cc}
1 & 0\\
0 & 0
\end{array}\right]\right\|       ,\]
which means that we
decrease the length of $A^a_{m}$ by $k$ and increase the length of
$A^a_{1}$ by $k$. Adding these equations and dividing by $l$
yields
\[
\log\left\Vert \cM_{n}^{a}\left(x\right)\right\Vert +\sum_{j=l}^{\left(m-1\right)l-1}\frac{1}{l}\log\left\Vert M_{l}^{a}\left(x+j\omega\right)\right\Vert -\sum_{j=l}^{\left(m-2\right)l-1}\frac{1}{l}\log\left\Vert M_{2l}^{a}\left(x+j\omega\right)\right\Vert
-\sum_{k=0}^{l-1}\frac{1}{l}u_{k}\left(x\right)=O\left(\frac{1}{l}\right)
\]
up to a set of measure less than $3n\exp\left(- cL_l\left(\delta_0^l\right)^{-c_0}\right)$. For the functions $\displaystyle \sum_{j=l}^{\left(m-1\right)l-1}\frac{1}{l}\log\left\Vert M_{l}^{a}\left(x+j\omega\right)\right\Vert$ and $\displaystyle \sum_{j=l}^{\left(m-2\right)l-1}\frac{1}{l}\log\left\Vert M_{2l}^{a}\left(x+j\omega\right)\right\Vert$, Theorem \ref{sbet} can be applied. Note that $ml\sim n$. So,  the deviation $\delta$ is  the smallest deviation $\delta_0^n$ we can choose here. Then,
\[
\sum_{j=l}^{\left(m-1\right)l-1}\frac{1}{l}\log\left\Vert M_{l}^{a}\left(x+j\omega\right)\right\Vert -\sum_{j=l}^{\left(m-2\right)l-1}\frac{1}{l}\log\left\Vert M_{2l}^{a}\left(x+j\omega\right)\right\Vert
=\left(m-2\right)lL_{l}^{a}-\left(m-3\right)lL_{2l}^{a}+O\left(n\delta_0^n\right)
\]
up to a set of measure less than $\exp\left(-cn\delta_0^n\right)$. Note that $u_{k}$, $k=0,\ldots,l-1$  have the subharmonic extensions. Therefore, for any $\frac{u_k}{l}$, Theorem \ref{sbet} can be applied with $n=1$ and $\delta=\frac{n\delta_0^n}{l}$, and obtain that
\[
\sum_{k=0}^{l-1}\frac{1}{l}u_{k}\left(x\right)-\sum_{k=0}^{l-1}\frac{1}{l}\left\langle u_{k}\right\rangle =O\left(n\delta_0^n\right)
\]
up to a set of measure less than $l\exp(-cn^{1-c'}\delta_0^n)$. Thus, combining  these equations, we have that
\begin{equation}\label{M_impure_AP}
\log\left|f_{n}^{a}\left(x\right)\right|+\left(m-2\right)lL_{l}^{a}-\left(m-3\right)lL_{2l}^{a}-\sum_{k=0}^{l-1}\frac{1}{l}\left\langle u_{k}\right\rangle =O\left(n\delta_0^n\right)
\end{equation}
up to a set of measure less than $3n\exp\left(- cL_l\left(\delta_0^l\right)^{-c_0}\right)+l\exp(-cn^{1-c}\delta_0^n)+\exp\left(-cn\delta_0^n\right)$. Recalling that $\delta_0^n=
C_{\omega}n^{-\frac{1}{A}+}$, $c_0=\frac{1}{5}$ and $l\sim n^{c'}$, we have
\[3n\exp\left(- cL_l(E)\left(\delta_0^l\right)^{-c_0}\right)+l\exp(-cn^{1-c'}\delta_0^n)+\exp\left(-cn\delta_0^n\right)\le \exp\left(-c''n^{\frac{c'}{2}}\right) ,\]
where $c''$ is a small constant depending on $a,v,\omega$ and $E$. Integrating (\ref{M_impure_AP}) and using the fact that $\left\Vert \log\left|f_{n}^{a}\right|\right\Vert _{L^{2}\left(\TT\right)}\le Cn$, yields
\[\left<\log\left|f_{n}^{a}\left(x\right)\right|\right>+\left(m-2\right)lL_{l}^{a}-\left(m-3\right)lL_{2l}^{a}-\sum_{k=0}^{l-1}\frac{1}{l}\left\langle u_{k}\right\rangle =O\left(n\delta_0^n\right)+Cn\exp\left(-c''n^{\frac{c'}{2}}\right)=O\left(n\delta_0^n\right).\]
Combining it with (\ref{M_impure_AP}), we have
\begin{equation}
\left|\log\left|f_{n}^{a}\left(x\right)\right|-\left\langle \log\left|f_{n}^{a}\right|\right\rangle \right|=O\left(n\delta_0^n\right)\label{f^a-avg-bound}
\end{equation}
up to a set of measure less than $\exp\left(-c''n^{\frac{c'}{2}}\right)$.
Let $\cB$ be this exceptional set  and define
\[
\frac{1}{n}\log\left|f_{n}^{a}\right|-\left\langle \frac{1}{n}\log\left|f_{n}^{a}\right|\right\rangle =u_{0}+u_{1}
\]
 where $u_{0}=0$ on $\cB$ and $u_{1}=0$ on $\TT\setminus\cB$.
Obviously,  $\left\Vert u_{0}-\left\langle u_{0}\right\rangle \right\Vert _{L^{\infty}\left(\TT\right)}=O\left(\delta_0^n\right)$
and
\[
\left\Vert u_{1}\right\Vert _{L^{2}\left(\TT\right)}\le C\sqrt{\mes\left(\cB\right)}\le C\exp\left(-c''n^{\frac{c'}{2}}\right).
\]
Due to Lemma \ref{bgsbmonorm},
\[
\left\Vert \log\left|f_{n}^{a}\right|\right\Vert _{BMO\left(\TT\right)}=O\left(n\delta_0^n\right).
\]
\end{proof}
~\\

Similar to Lemma \ref{Ln_L}, we also can prove that $\langle \frac{1}{n}\log|f_n^a| \rangle $ in  Theorems \ref{ldtforfan}-\ref{ldth2} can be exchanged by $L^a$.
\begin{lemma}
\label{<f>-nL}There exists a constant $C_{0}=C_{0}\left(a,v,E,\omega,\gamma\right)$
such that
\[
\left|\left\langle \log\left|f_{n}^{a}\right|\right\rangle -nL_{n}^{a}\right|\le C_{0}
\]
for all integers.\end{lemma}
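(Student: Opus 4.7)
The claim splits naturally into an upper and a lower bound, and I would treat them separately.

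The upper bound $\langle \log|f_n^a|\rangle \le nL_n^a$ is immediate: by (\ref{Mnazdet}), $f_n^a(x)$ is the $(1,1)$-entry of the analytic transfer matrix $M_n^a(x)$, so $|f_n^a(x)| \le \|M_n^a(x)\|$ pointwise on $\TT$. Integrating gives $\langle\log|f_n^a|\rangle \le \langle\log\|M_n^a\|\rangle = nL_n^a$.

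For the lower bound I would first pass from $f_n^a$ to $f_n^u$: using (\ref{MnuzdetMnaz}), (\ref{MnzdetMnaz}) and the translation invariance of Lebesgue measure on $\TT$, one checks that $\langle\log|f_n^a|\rangle = \langle\log|f_n^u|\rangle + nD$, while (\ref{la}) gives $nL_n^a = nL_n + nD$. Hence it suffices to prove $\langle\log|f_n^u|\rangle \ge nL_n - C_0$. Lemma \ref{avg_entries->lower_bound} only yields the weaker estimate $\langle\log|f_n^u|\rangle > nL_n - n(\delta_0^n)^{c_0}$, which is not $O(1)$ in $n$ for Brjuno-R\"ussmann frequencies; the whole game is to bootstrap this up to a constant.

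The bootstrap I would use runs parallel to the AP analysis in the proof of Theorems \ref{ldtforfan}-\ref{ldth2}. Decompose $M_n^a = B_m \cdots B_1$ with $B_j = M_l^a(x+(j-1)l\omega)$, $B_m = M_{l'}^a(x+(m-1)l\omega)$, and apply the Avalanche Principle (Proposition \ref{prop:AP}) twice: once to $M_n^a$ itself and once to $\cM_n^a = E_{11} M_n^a E_{11}$, exactly as in the derivation of (\ref{M_impure_AP}). Both expansions contain the same bulk $(m-2)lL_l^a - (m-3)lL_{2l}^a$; the bulk cancels on subtraction, leaving
\[
\langle \log|f_n^a|\rangle - nL_n^a = \frac{1}{l}\sum_{k=0}^{l-1}(\langle u_k\rangle - \langle \tilde u_k\rangle) + O(1/l) + O(n\delta_0^n),
\]
where $u_k$ are the boundary terms from (\ref{M_impure_AP}) and $\tilde u_k$ are their analogues without the $E_{11}$-projections. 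Each difference $\langle u_k\rangle - \langle \tilde u_k\rangle$ is the integrated log-loss incurred by projecting $M_l^a$ onto $e_1$ at the ends, and one can bound it by the same lemma applied at scale $l + k \sim l$, obtaining $\langle u_k\rangle - \langle \tilde u_k\rangle = O(1)$ provided $|\langle \log|f_m^a|\rangle - mL_m^a|$ is already known at scale $m \sim l$. The average over $k$ then contributes an $O(1)$ term.

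The main obstacle is the stray $O(n\delta_0^n)$ from the AP step, which a single application cannot absorb into a constant. I would handle this by induction on dyadic scales: assume the statement at all scales $m \le n/2$ with some constant $C_0$, apply the above AP comparison with $l$ chosen near $n^{c'}$, and use the inductive hypothesis at scale $l$ to convert the boundary contribution into an honest $O(1)$. The error $O(n\delta_0^n)$ is then itself re-expanded via another AP step at a coarser scale, and after $O(\log n)$ iterations (doubling block sizes) it becomes $O(1)$. Self-consistency of the constant across scales follows because, at each step, the newly introduced error is a fixed multiple of $1/l$ smaller than at the previous step, so the geometric series converges to a constant $C_0$ depending only on $a,v,E,\omega,\gamma$.
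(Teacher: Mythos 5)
Your overall strategy is the same as the paper's: apply the Avalanche Principle twice, once to $\cM_{n}^{a}=E_{11}M_{n}^{a}E_{11}$ and once to $M_{n}^{a}$, subtract so that the bulk cancels, control the boundary terms by the statement at scale $\sim l$, and iterate over scales. The upper bound $\langle\log|f_n^a|\rangle\le nL_n^a$ and the reduction via $\langle\log|f_n^a|\rangle=\langle\log|f_n^u|\rangle+nD$ are fine (though unnecessary). The problem is the term $O\left(n\delta_0^n\right)$ in your displayed identity. It appears only because you imported from the proof of Theorems \ref{ldtforfan}--\ref{ldth2} the step in which the bulk sums $\sum_{j}\frac{1}{l}\log\Vert M_l^a(x+j\omega)\Vert$ are replaced, pointwise via Theorem \ref{sbet}, by $(m-2)lL_l^a-(m-3)lL_{2l}^a$. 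That replacement is needed for the pointwise LDT, but it is exactly what must \emph{not} be done here: under every hypothesis H.1--H.3 one has $n\delta_0^n\to\infty$ (e.g.\ $n\delta_0^n\sim n^{0+}\log n$ under H.1, and $n\delta_0^n\sim n/(\log\log n)^{1-}$ under H.2), so no identity containing an additive $O(n\delta_0^n)$ can yield an $O(1)$ bound. Your proposed remedy --- ``re-expand the $O(n\delta_0^n)$ error via another AP step at a coarser scale'' so that after $O(\log n)$ iterations it ``becomes $O(1)$'' --- is not a meaningful operation: that error is a fixed numerical deviation already incurred, not a matrix product that can be re-decomposed, and nothing in your scheme makes it shrink; the geometric-series remark only addresses the $O(1/l)$ errors.

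The fix, which is how the paper argues, is never to introduce the deviation error at all. Write both AP expansions in their pointwise form: the interior terms $\sum_{j=2}^{m-1}\log\Vert A_j^a(x)\Vert-\sum\log\Vert A_{j+1}^a(x)A_j^a(x)\Vert$ are literally the same functions of $x$ in the expansion of $\cM_n^a$ and of $M_n^a$, so they cancel identically upon subtraction, before any ergodic theorem is invoked. What survives is $\log\Vert\cM_n^a(x)\Vert-\log\Vert M_n^a(x)\Vert$ plus boundary products at scales $l,2l$, valid up to exceptional sets of exponentially small measure; integrating (and using $\Vert\log|f_n^a|\Vert_{L^2(\TT)}\le Cn$ together with Cauchy--Schwarz on the exceptional sets) gives
\[
\left|\left\langle\log\left|f_n^a\right|\right\rangle-nL_n^a\right|\le C\,R(l)+O\!\left(\frac{1}{l}\right),\qquad R(l)=\sup_{l/2\le m\le l}\left|\left\langle\log\left|f_m^a\right|\right\rangle-mL_m^a\right|,
\]
with $\log n\ll l\ll n$, and iterating this recursion is what produces the constant $C_0$. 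So your boundary-term analysis is in the right spirit, but as written the proof does not close because of the unremovable $O(n\delta_0^n)$.
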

\begin{proof}
Recall that
\[
\log\left\Vert \cM_{n}^{a}\left(x\right)\right\Vert +\sum_{j=2}^{m-1}\log\left\Vert A_{j}^{a}\left(x\right)\right\Vert -\sum_{j=1}^{m-1}\log\left\Vert A_{j+1}^{a}\left(x\right)A_{j}^{a}\left(x\right)\right\Vert =O\left(\frac{1}{l}\right)
\]
up to a set of measure less than $3m\exp\left(- cL_l\left(\delta_0^l\right)^{-c_0}\right)$. Similarly,
\begin{eqnarray*}
 \log\left\Vert M_{n}^{a}\left(x\right)\right\Vert &&+\sum_{j=2}^{m-1}\log\left\Vert A_{j}^{a}\left(x\right)\right\Vert -\sum_{j=1}^{m-1}\log\left\Vert A_{j+1}^{a}\left(x\right)A_{j}^{a}\left(x\right)\right\Vert\\
 &&-\log\left\Vert M_{l'}^{a}\left(x+(m-1)l\omega\right)M_{l}^{a}\left(x+(m-2)l\omega\right)\right\Vert =O\left(\frac{1}{l}\right)
\end{eqnarray*}
up to a set of measure less than $3m\exp\left(-cL_l(E)l\right)$. Subtracting these two expressions
and then integrating, yields
\[
\left|\left\langle \log\left|f_{n}^{a}\right|\right\rangle -nL_{n}^{a}\right|\le CR\left(l\right)+O\left(\frac{1}{l}\right)
\]
where
\[
R\left(n\right)=\sup_{n/2\le m\le n}\left|\left\langle \log\left|f_{m}^{a}\right|\right\rangle -mL_{m}^{a}\right|,\ \mbox{and}\ \log n\ll l\ll n.
\]
Then, our conclusion is obtained by iterating this estimate.
\end{proof}

\section{The proof of Theorem \ref{thm:Number-of-eigenvalues}}
We used the LDTs and the Avalanche Principle together in  the above two proofs. As we have mentioned in the introduction, this method was first created in \cite{GS01} to prove the H\"older continuity of Lyapunov exponent $L^s(E,\omega)$ in $E$ with the strong Diophantine $\omega$. Recently, our second author also applied it to obtain the same continuity of $L(E,\omega)$ with any irrational $\omega$ in \cite{T18}. Our proof of Theorem  \ref{thm:Number-of-eigenvalues} needs this result. Therefore, we list it as a lemma:
\begin{lemma}\label{holder-contin}
 Assume $\beta(\omega)=0$ and $L(E_0,\omega)>0$. There exists $r_E=r_E(a,v,E_0,\omega)$ such that for any $|E-E_0|\leq r_E$,
 \[\frac{3}{4}L(E_0,\omega)<L(E,\omega)<\frac{5}{4}L(E_0,\omega).\]
 Furthermore, there exists a constant $h=h(a,v)$ called H\"older exponent such that for any $E_1,E_2\in [E_0-r_E,E_0+r_E]$,
 \begin{equation}\label{holder}
   \left|L(E_1,\omega)-L(E_2,\omega)\right|<|E_1-E_2|^h.
 \end{equation}
\end{lemma}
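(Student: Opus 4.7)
The plan has two distinct stages, each invoking a well-established template. For the first assertion, I appeal to the Bourgain--Jitomirskaya continuity of $L(\cdot,\omega)$ in $E$ for every irrational $\omega$ (cited in the introduction as \cite{BJ02}). Since $L(E_0,\omega)>0$, I choose $r_E=r_E(a,v,E_0,\omega)>0$ small enough that $|L(E,\omega)-L(E_0,\omega)|<\tfrac14 L(E_0,\omega)$ for all $|E-E_0|\le r_E$, which immediately yields the two--sided bound. A useful byproduct is that this gives a \emph{uniform} lower bound $L(E,\omega)\ge \tfrac34 L(E_0,\omega)>0$ on the whole interval, so that the LDT of Lemma~\ref{ldtformatrices} and the strong Birkhoff theorem of Theorem~\ref{sbet} apply to $M_n(\cdot,E,\omega)$ with constants that depend only on $a,v,E_0,\omega$ and not on $E\in[E_0-r_E,E_0+r_E]$.

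For the H\"older estimate I follow the Goldstein--Schlag scheme of \cite{GS01}, in the form developed for Jacobi cocycles with Brjuno--R\"ussmann frequency in \cite{T18}. The engine is the Avalanche Principle (Proposition~\ref{prop:AP}), applied at scale $n$ to the decomposition $M_{2n}(x,E,\omega)=M_n(x+n\omega,E,\omega)\cdot M_n(x,E,\omega)$ and, more generally, to longer iterated products of blocks of length $n$. The hypotheses of the AP are verified on a good set using the LDT of Lemma~\ref{ldtformatrices} together with the uniform lower bound on $L$ from Stage~1. Integrating the AP identity in $x$ and discarding the exponentially small bad set produces the two--scale comparison
\[
|L_{2n}(E,\omega)-L_n(E,\omega)|\le \frac{C}{n}+C\delta_0^n,
\]
uniformly for $E\in[E_0-r_E,E_0+r_E]$. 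Telescoping over the dyadic scales $n,2n,4n,\ldots$ and combining with Lemma~\ref{Ln_L} gives $|L(E,\omega)-L_n(E,\omega)|\lesssim \delta_0^n+C/n$.

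To upgrade this scale--by--scale information into a modulus of continuity in $E$, I exploit that $M_n(x,E,\omega)$ is polynomial in $E$ of degree $n$ with coefficients bounded by $C_a^n$ for $C_a=C_a(\|a\|_\infty,\|v\|_\infty)$, so
\[
\|M_n(x,E_1,\omega)-M_n(x,E_2,\omega)\|\le nC_a^n|E_1-E_2|.
\]
On the large--norm set $\{\|M_n(x,E,\omega)\|\ge e^{n(L-\delta_0^n)}\}$, which has full measure up to the LDT exception, this yields $|\tfrac1n\log\|M_n(x,E_1,\omega)\|-\tfrac1n\log\|M_n(x,E_2,\omega)\||\lesssim C_a^n e^{-nL}|E_1-E_2|$. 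Choosing $n\asymp|\log|E_1-E_2||$ so that $C_a^n|E_1-E_2|\le e^{nL/2}$ produces $|L_n(E_1,\omega)-L_n(E_2,\omega)|\le|E_1-E_2|^h$ for some $h=h(a,v)>0$ determined by the ratio $L(E_0,\omega)/\log C_a$ (a further, cruder choice removes the $E_0$--dependence, leaving only $a,v$). Adding the Stage~2 approximation $|L-L_n|\lesssim \delta_0^n$ at this optimal scale then yields the desired H\"older estimate.

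The main obstacle is bookkeeping rather than a new idea: the argument is by now standard, and the real care lies in checking that (i)~the LDT constants are genuinely uniform in $E\in[E_0-r_E,E_0+r_E]$, which is what Stage~1 is designed to guarantee, (ii)~the telescoping of the AP/LDT comparison converges, which is automatic here because $\beta(\omega)=0$ forces $\delta_0^n\to 0$ as $n\to\infty$ with the rates of Theorem~\ref{sbet}, and (iii)~the exponent $h$ can be extracted so as to depend only on $a,v$ and not on the particular Brjuno--R\"ussmann function $\Delta$ or on $E_0$. With these three bookkeeping checks in place, the proof reduces to transcribing the corresponding argument of \cite{T18} in the regime $\beta(\omega)=0$.
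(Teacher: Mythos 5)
The paper does not actually prove this lemma; it imports it wholesale from \cite{T18}, so your task was to reconstruct the Goldstein--Schlag/Tao argument. Your Stage~1 (continuity of $L(\cdot,\omega)$ in $E$ plus a choice of $r_E$) is fine, and your overall LDT\,+\,Avalanche Principle framework is the right one. But the final step of Stage~3 contains a genuine gap. You propose to conclude by writing $|L(E_1)-L(E_2)|\le |L_n(E_1)-L_n(E_2)|+2\sup_E|L(E)-L_n(E)|$, bounding the first term by $C_a^n|E_1-E_2|$ and the second by $\delta_0^n+C/n$, and then optimizing $n\asymp|\log|E_1-E_2||$. At that scale the approximation error $\delta_0^n+C/n$ is only of size $1/|\log|E_1-E_2||$ (and under Hypothesis H.2 it is as large as $(\log\log\log(1/|E_1-E_2|))^{-1+}$), which is nowhere near $|E_1-E_2|^h$. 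Your route therefore only delivers weak (log-)H\"older continuity. The actual argument must exploit that the \emph{combination} $2L_{2n}-L_n$ (equivalently $2L^a_{2n}-L^a_n$) converges to $L$ \emph{exponentially} fast in $n$: applying the Avalanche Principle with blocks of length $n$ and $H=e^{c\gamma n}$, and using the last clause of Lemma~\ref{ldtformatrices} (deviation $\delta=\kappa L$ with exceptional measure $\exp(-\bar c\kappa^2 Ln)$), one gets $|L-2L_{2n}+L_n|\le e^{-c\gamma^2 n}$ even though $L_n-L$ itself decays only like $(\log n)^2/n$. One then applies the scale-$n$ Lipschitz bound to $2L_{2n}-L_n$, not to $L_n$, and the optimization in $n$ produces a genuine H\"older exponent. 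Relatedly, your intermediate claim that telescoping $|L_{2^{k+1}n}-L_{2^kn}|\lesssim \delta_0^{2^kn}+C/(2^kn)$ over dyadic scales yields $|L-L_n|\lesssim\delta_0^n+C/n$ is unjustified: $\sum_k\delta_0^{2^kn}$ diverges for the slowly decaying deviations of Hypothesis H.2, and in any case Lemma~\ref{Ln_L} already supplies the $(\log n)^2/n$ rate without this detour.

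A secondary but real issue is the Lipschitz-in-$E$ bound $\|M_n(x,E_1,\omega)-M_n(x,E_2,\omega)\|\le nC_a^n|E_1-E_2|$. For the Jacobi cocycle $M_n$ carries the factor $\prod_j a(x+j\omega)^{-1}$, which is unbounded at the zeros of $a$, so this estimate fails pointwise in $x$ and $C_a$ cannot be taken finite. You must run the comparison on $M_n^a$ (whose entries are genuine polynomials in $E$ with coefficients bounded by $M_0^n$) and transfer back to $L_n$ via the Birkhoff averages of $\log|a|$ controlled by Theorem~\ref{sbet}, exactly as the paper does elsewhere (cf.\ (\ref{MnzdetMnaz}) and the discussion after Lemma~\ref{Ln_L}). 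Finally, your remark that a ``cruder choice'' removes the dependence of $h$ on $E_0$ is not substantiated: in this scheme the exponent comes out proportional to $\gamma^2/\log C_a$ with $\gamma=L(E_0,\omega)$, and eliminating that dependence requires an argument you have not given.
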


\begin{proof}[The proof of Theorem \ref{thm:Number-of-eigenvalues}]
From Theorem \ref{ldtforfan}-\ref{ldth2} and Lemma \ref{<f>-nL}, we have that for any $\delta>\delta_0^n$  and $\left(x,E\right)\in\TT\times\cE$
except for a set of measure $C\exp\left(-c\delta\left(\delta_0^n\right)^{-1}\right)$,
\begin{equation}
\left|\log\left|f_{n}^{a}\left(x,E,\omega\right)\right|-nL_{n}^{a}\left(E,\omega\right)\right|\le n\delta.\label{ldt_in_E}
\end{equation}
Then, due to Fubini's Theorem and Chebyshev's inequality, there exists a set $\cB_{n,\delta}\subset\TT$ with $\mes\cB_{n,\delta}<C\exp\left(-c\delta\left(\delta_0^n\right)^{-1}\right)$,
such that for each $x\in\TT\setminus\cB_{n,\delta}$ there exists
$\cE_{n,\delta,x}\subset\cD$, with $\mes\cE_{n,\delta,x}<C\exp\left(-c\delta\left(\delta_0^n\right)^{-1}\right)$ such that (\ref{ldt_in_E}) holds for any $E\in\cE\setminus\cE_{n,\delta,x}$. Therefore, there exist $x_{1},\, E_{1}$
satisfying
\[
\left|x_{1}-x_{0}\right|\le C\exp\left(-c\left(\delta_0^n\right)^{-\frac{1}{2}}\right),
\]and
\[
\left|E_{1}-E_{0}\right|\le C\exp\left(-c\left(\delta_0^n\right)^{-\frac{1}{2}}\right),
\]
such that
\begin{equation}
\log\left|f_{n}^{a}\left(x_{1},E_{1}\right)\right|\ge nL_{n}\left(E_{1}\right)-n\left(\delta_0^n\right)^{\frac{1}{2}}.\label{f^a-lb}
\end{equation}
Define
\[R:=\left(\delta_0^n\right)^{\frac{1}{h}}\gg C\exp\left(-c_{0}\left(\delta_0^n\right)^{-\frac{1}{2}}\right),\]
and
$$\cN_{x,E}\left(r\right)=\#\left\{ E:\, f_{n}^{a}\left(x,E'\right)=0,\,\left|E'-E\right|\le r\right\} .$$
The Jensen formula states that for any function $f$ analytic on a neighborhood of $\cD(z_0,R)$, see~\cite{levin},
\begin{equation}
 \label{eq:jensen}
 \int_0^1 \log |f(z_0+Re(\theta))|\, d\theta - \log|f(z_0)| = \sum_{\zeta:f(\zeta)=0} \log\frac{R}{|\zeta-z_0|}
\end{equation}
provided $f(z_0)\ne0$. Thus, we have that
\begin{equation}
\cN_{x_{1},E_{1}}\left(3R\right)\le \frac{1}{2\pi}\int_{0}^{2\pi}\log\left|f_{n}^{a}\left(x_{1},E_{1}+4Re^{i\theta}\right)\right|d\theta-\log\left|f_{n}^{a}\left(x_{1},E_{1}\right)\right|\label{Jensen-at-E1}.
\end{equation}
By Lemma \ref{M^a-upper-bound}, it
yields
\[
\cN_{x_{1},E_{1}}\left(3R\right)\le \left(\sup_{\left|E-E_{1}\right|=4R}\left(n\left(L_{n}^{a}\left(E\right)-L_{n}^{a}\left(E_{1}\right)\right)\right)\right)+3n\delta_0^n.
\]
Due to Lemma \ref{holder-contin}, if $|E_1-E_2|<4R$, then
\[\left|L(E_1)-L(E_2)\right|<\left|E_1-E_2\right|^{h}<4\delta_0^n.\]
Combining  it with Lemma \ref{Ln_L} and the fact that $\delta_0^n\gg\frac{\left(\log n\right)^2}{n}$, we have
\[\left(\sup_{\left|E-E_{1}\right|=4R}\left(n\left(L_n^{a}\left(E\right)-L_n^{a}\left(E_{1}\right)\right)\right)\right)<10 n\delta_0^n.\]
Thus,
\[\cN_{x_{1},E_{1}}\left(3R\right)\le 13 n\delta_0^n.\]
Recalling  that $|E_0-E_1|\ll R$, we have
\begin{equation}
\cN_{x_{1},E_{0}}\left(2R\right)\le\cN_{x_{1},E_{1}}\left(3R\right)\le 13n\delta_0^n.\label{z(R)-bound}
\end{equation}
Note that $H_n(x,\omega)$ is Hermitian. Thus,
by the Mean Value Theorem,
\[
\left\Vert H_n\left(x_{0},\omega\right)-H_n\left(x_{1},\omega\right)\right\Vert \le C\left|x_{0}-x_{1}\right|\le C\exp\left(-c_{0}\left(\delta_0^n\right)^{-\frac{1}{2}}\right).
\]
Let $E_{j}^{\left(n\right)}\left(x,\omega\right)$, $j=1,\ldots,n$ be the
eigenvalues of $H_n\left(x,\omega\right)$ ordered increasingly. Then,
\[
\left|E_{j}^{\left(n\right)}\left(x_{0}\right)-E_{j}^{\left(n\right)}\left(x_{1}\right)\right|\le C\exp\left(-c_{0}\left(\delta_0^n\right)^{-\frac{1}{2}}\right).
\]
This implies that
$$\cN_{x_{0},E_{0}}\left(R\right)\le\cN_{x_{1},E_{0}}\left(2R\right)<13n\delta_0^n.$$
\end{proof}
\begin{remark}
  Similarly,
  \[
\#\left\{ z\in\mathbb{C}:\, f_{n}^{a}\left(z,E_0,\omega\right)=0,\,\left|z-x_{0}\right|<\left(\delta_0^n\right)^{\frac{1}{h}}\right\} \le 13n\delta_0^n.
\]
\end{remark}

\end{document}